\def\marker{\>\hbox{${\vcenter{\vbox{
    \hrule height 0.4pt\hbox{\vrule width 0.4pt height 6pt
    \kern6pt\vrule width 0.4pt}\hrule height 0.4pt}}}$}\>}
\newtheorem{theorem}{Theorem} %Defines \begin{theorem} to write "Theorem"
\newtheorem{theorem*}{Theorem} %Defines \begin{theorem} to write "Theorem"
\newtheorem{proposition}{Proposition} %square bracket tells it to count props with theorems
\newtheorem{corollary}[theorem]{Corollary}
\newtheorem{lemma}{Lemma}
\newtheorem{conjecture}[theorem]{Conjecture}
\newtheorem{fact}[theorem]{Fact}
\newtheorem*{hypothesis}{Hypothesis}
\theoremstyle{definition}
\newtheorem{observation}{Observation}
\newtheorem{problem}[theorem]{Problem}
\theoremstyle{remark}
\newcounter{casenum}	
\newcounter{subcasenum}	
\numberwithin{subcasenum}{casenum}
\newcounter{subsubcasenum}	
\numberwithin{subsubcasenum}{subcasenum}
\newcounter{stagenum}
\newenvironment{selectionrule}
{

  \list{}{%
    \leftmargin0.5in   % this is the adjusting screw
    \rightmargin0cm %\leftmargin
    \setlength{\parindent}{-0.4in}
    \setlength{\itemindent}{-0.4in}
  }
  \item\relax
}
{
\endlist

}
\def\R{{\mathbb{R}}}
\def\Z{{\mathbb{Z}}}
\def\S{{\mathbb{S}}}
\def\Aut{\operatorname{Aut}}
\def\cO{{\mathcal O}}
\def\vx{{\mathbf x}}
\def\vb{{\mathbf b}}
\def\red{\operatorname{R}}
\def\blue{\operatorname{B}}
\newcommand{\taur}{\tau_{\red}}
\newcommand{\taub}{\tau_{\blue}}
\newcommand{\redreflect}[1]{\taur^{(#1)}}
\newcommand{\bluereflect}[1]{\taub^{(#1)}}
\newcommand{\rotation}[1]{\sigma_{#1}}
\def\im{{\rm im}}
\def\extD{\operatorname{ext}_D}
\title{Extending Precolorings to Distinguish Group Actions}
\author{Michael Ferrara$^{1,2}$ \and Ellen Gethner$^3$ \and Stephen G. Hartke$^{4}$ \and Derrick Stolee$^{5}$ \and Paul S. Wenger$^{6}$}
\begin{document}

\maketitle

\begin{abstract}

Given a group $\Gamma$ acting on a set $X$, a $k$-coloring $\phi:X\to\{1,\dots,k\}$ of $X$ is \textit{distinguishing} with respect to $\Gamma$ if the only $\gamma\in \Gamma$ that fixes $\phi$ is the identity action.  The \textit{distinguishing number} of the action $\Gamma$, denoted $D_{\Gamma}(X)$, is then the smallest positive integer $k$ such that there is a distinguishing $k$-coloring of $X$ with respect to $\Gamma$.  This notion has been studied in a number of settings, but by far the largest body of work has been concerned with finding the distinguishing number of the action of the automorphism group of a graph $G$ upon its vertex set, which is referred to as the distinguishing number of $G$.    

The distinguishing number of a group action is a measure of how difficult it is to ``break'' all of the permutations arising from that action.
In this paper, we aim to further differentiate the resilience of group actions with the same distinguishing number.  In particular, we introduce a precoloring extension framework to address this issue.  A set $S \subseteq X$ is a \emph{fixing set} for $\Gamma$ if for every non-identity element $\gamma \in \Gamma$ there is an element $s \in S$ such that $\gamma(s) \neq s$.  The \emph{distinguishing extension number} $\extD(X,\Gamma;k)$ is the minimum number $m$ such that for all fixing sets $W \subseteq X$ with $|W| \geq m$, every $k$-coloring $c : X \setminus W \to [k]$ can be extended to a $k$-coloring that distinguishes $X$.

In this paper, we prove that $\extD(\R,\Aut(\R),2) =4$, where $\Aut(\R)$ is comprised of compositions of translations and reflections.  We also consider the distinguishing extension number of the circle and (finite) cycles, obtaining several exact results and bounds.

{\bf Keywords:} Distinguishing Coloring, Distinguishing Number, Group Action.
\end{abstract}

\footnotetext[1]{Department of Mathematical and Statistical Sciences, University of Colorado Denver, Denver, CO 80217.\\ \texttt{michael.ferrara@ucdenver.edu}}
\footnotetext[2]{Research supported in part by Simons Foundation Grant \#206692.}
\footnotetext[3]{Department of Computer Science and Engineering, University of Colorado Denver, Denver, CO 80217.\\ \texttt{ellen.gethner@ucdenver.edu.}}
\footnotetext[4]{Department of Mathematics, University of Nebraska-Lincoln, Lincoln, NE 68588. \texttt{hartke@math.unl.edu}. Research supported in part by NSF Grant DMS-0914815.}
\footnotetext[5]{Department of Mathematics, Department of Computer Science, Iowa State University, Ames, IA 50011. \texttt{dstolee@iastate.edu}}
\footnotetext[6]{School of Mathematical Sciences, Rochester Institute of Technology, Rochester, NY 14623. \texttt{pswsma@rit.edu}}
\renewcommand{\thefootnote}{\arabic{footnote}}

\section{Introduction}

Given a group $\Gamma$ acting on a set $X$, a $k$-coloring $\phi:X\to\{1,\dots,k\}$ of $X$ is \textit{distinguishing} with respect to $\Gamma$ if the only $\gamma\in \Gamma$ that fixes $\phi$ is the identity action.  The \textit{distinguishing number} of the action $\Gamma$, denoted $D_{\Gamma}(X)$, is then the smallest positive integer $k$ such that there is a distinguishing $k$-colorings of $X$ with respect to $\Gamma$.  

The notion of distinguishing a general group action was introduced by Tymoczko in \cite{T04}, where a number of results on actions of $S_n$ appear, and was also addressed in \cite{chan2006distinguishing,chan2006maximum}. In \cite{KWZ06},  the distinguishing number of $GL_n(K)$ over a field $K$ acting on the vector space $K^n$ was completely determined.   We are most frequently concerned with the case where $\Gamma$ is the action of a symmetry group on some geometric or combinatorial object.  In particular, the overwhelming body of work on this problem is concerned with determining the \textit{distinguishing number} of a graph $G$, first introduced by Albertson and Collins \cite{AC} in 1996. 

Specifically, a vertex coloring of a graph $G$, $c : V(G) \to \{ 1,\dots,k \}$, is said to be \emph{distinguishing} if the only automorphism of $G$ that preserves all of the vertex colors is the identity.  The \emph{distinguishing number} of a graph $G$, denoted $D(G)$, is the minimum integer $r$ such that $G$ has a distinguishing $r$-coloring.  In the notation outlined above for general group actions, we therefore have that $D(G)=D_{Aut(G)}(V(G))$.  The distinguishing number of a graph has been widely studied for both finite (see  \cite{ABGeom06,AC,ChengPlanar08,ChengTree06,ChengInterval09}) and, starting in \cite{IWK07}, infinite (see \cite{BonDist10,BonHom12,laflamme2010distinguishing,LeInf13,WZInf07}) graphs. 

The distinguishing number of a group action is a measure of how difficult it is to ``break'' all of the permutations arising from that action;
the more colors required, the more resilient the action.
Almost all graphs have trivial automorphism group (see \cite{bollobas2004extremal}), and hence have distinguishing number 1. Many other familiar graph classes have distinguishing number 2 (see \cite{AlbCart05,AC,BCCubeDist04,FI08,IJK08,IK06}) despite their diverse collection of automorphism groups and seemingly disparate structural properties.  This leads us to ask the following:

\begin{center}
How can we further differentiate the resilience of group actions with the same distinguishing number?
\end{center}

In this paper, we propose a precoloring extension approach to this question.

\subsection{Extending Precolorings to Distinguishing Colorings}

A \emph{precoloring} of $X$ is a $k$-coloring of $X\setminus W$ for some $k$ and subset $W$ of $X$.  
We want to understand when a precoloring can be extended to a distinguishing $k$-coloring of all of $X$.
Specifically, given a set $W$ where every precoloring of $X \setminus W$ can be extended to a distinguishing coloring of $X$, it follows that an arbitrary $k$-coloring $c$ of $X$ can be modified into a distinguishing $k$-coloring by changing at most the colors on $W$.
%a measure of how resilient the group action is to elements of $X$ being colored.

A precoloring of $X\setminus W$ cannot be extended to a distinguishing coloring if there is a nontrivial element $\sigma \in \Gamma$ such that $\sigma$ pointwise stabilizes $W$; hence we preclude such subsets $W$ from our consideration.  
Formally, a set $S \subseteq X$ is a \emph{fixing set} for $\Gamma$ if the pointwise stabilizer of $S$ in $\Gamma$ is trivial; that is, for every non-identity element $\sigma \in \Gamma$ there is an element $s \in S$ such that $\sigma(s) \neq s$.  
We define the \emph{distinguishing extension number} $\extD(X,\Gamma;k)$ to be the minimum number $m$ such that for all fixing sets $W \subseteq X$ with $|W| \geq m$, every precoloring $c : X \setminus W \to \{1,\dots,k\}$ can be extended to a $k$-coloring of $X$ that is distinguishing under $\Gamma$.  
If $G$ is a graph, we write $\extD(G;k)$ instead of the more cumbersome $\extD(V(G),\Aut(G);k)$, and to further simplify, we write $\extD(G)$ instead of $\extD(G;D(G))$ and unambiguously refer to this quantity as the \textit{distinguishing extension number} of $G$.
We similarly define $\extD(X) = \extD(X, \Gamma;D_\Gamma(X))$ to be the \textit{distinguishing extension number} of $X$.

The problem of determining $\extD(X,\Gamma;k)$ can be viewed as a partizan combinatorial game that fits under the broad umbrella of competitive graph coloring.  Given a group $\Gamma$ acting on a set $X$, $m\ge 0$ and $k\ge D_{\Gamma}(X)$, two players, the Hero and and the Adversary, play the following game.  The Adversary begins by coloring all but an $m$-element fixing set of $X$ using colors from $\{1,\dots,k\}$.  The Hero wins if he can extend the Adversary's coloring to a distinguishing $k$-coloring of $X$ by coloring the $m$ uncolored elements using colors from $\{1,\dots,k\}$.  
The distinguishing extension number $\extD(X,\Gamma;k)$ is then the minimum $m$ such that the Hero has a guaranteed win.

Our work in this paper is further motivated (and contextualized) by the following problem in graph coloring: Given a graph $G$ and a $k$-coloring $c$ of some subset of $V(G)$, when can $c$ be extended to a {proper} $k$-coloring of $G$?  This problem was first introduced in \cite{PCI,PCII,PCIII}, and has been studied over the last twenty years not only in the context of proper colorings (see also \cite{AlbHutchPCplanar, albertson2004precoloring,  PruVoPC09,TuzaPCsurvey}), but also for list \cite{albertson98listextend, AxHutchListPC11}, circular \cite{AlbWestCircPC06,BrewNoelCircPC12} and fractional \cite{FracPC12} colorings of graphs.  As is the case here, the broad class of precoloring extension problems provide a framework by which it is possible to contrast the colorability of graphs that have the same value of a particular coloring parameter.

The remainder of this paper is structured as follows.  In Section~\ref{sec:prelim} we present some basic notions and state our main results.  In Section~\ref{sec:overview} we discuss an overview of our proof technique, with more detailed discussion of uncolored elements in Section~\ref{sec:uncolored} and with the final proofs of the main results in Section~\ref{sec:proof}.  We conclude with a discussion of future work in Section~\ref{sec:conclusion}.

%\subsection{Terminology and Notation}
%
%\textbf{As Needed}

\section{Preliminaries and Statement of Main Results}\label{sec:prelim}

In this paper, we study the distinguishing extension number of the real line and the unit circle.  Our investigation of the latter also allows us to naturally study the distinguishing extension number of the cycle $C_n$.  We begin by more generally considering $\R^n$, where $\Aut(\R^n)$ is the action of affine linear maps $\vx \mapsto A\vx + \vb$ where $A$ is a matrix with determinant in $\{+1,-1\}$.  For instance, the automorphisms of $\R$ are compositions of translations of $\R$ and reflections of $\R$ about a point.

Let $\S^d$ be the set of vectors $\vx\in\R^{d+1}$ with $\|\vx\|_2=1.$  The automorphisms of $\S^d$ are given by the $(d+1)\times (d+1)$ real matrices with determinant in $\{ +1, -1\}$.  When considering $\S^1$, we instead use the parameterization $\phi : \R \to \S^1$ given by $\phi(t) = (\cos(2\pi t), \sin(2\pi t))^\top$.  Hence, we  consider $\R/\Z$, the collection of preimages of $\phi$, to be the \emph{unit circle} with automorphisms given by rotations ($x \mapsto x + \alpha$) and reflections ($x \mapsto \beta - x$). 

We first consider $\extD(\R)$.  Coloring $(-\infty,0)$ red and $[0,\infty)$ blue is a distinguishing 2-coloring of $\R$, so we have that $D_{\Aut(\R)}(\R)=2$.  Next, we give a lower bound on $\extD(\R)$.

\begin{proposition}\label{prop:extD_R_LB}
$\extD(\R)\ge 4$.
\end{proposition}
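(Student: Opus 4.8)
The plan is to show that the condition defining $\extD(\R)=\extD(\R,\Aut(\R);2)$ fails at $m=3$; since that condition is a ``for all fixing sets $W$ with $|W|\ge m$'' statement, it only gets stronger as $m$ decreases, so its failure at $m=3$ gives $\extD(\R)\ge 4$. Thus it suffices to exhibit one fixing set $W$ with $|W|=3$ together with a $2$-precoloring of $\R\setminus W$ that cannot be completed to a distinguishing coloring of $\R$.

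First I would record which subsets are fixing sets: a nontrivial translation of $\R$ fixes no point, and a reflection $x\mapsto 2p-x$ fixes only the point $p$, so a subset of $\R$ is a fixing set exactly when it has at least two elements. Hence $W:=\{-1,0,1\}$ is admissible, and --- the feature the construction exploits --- it is symmetric about $0$. I would then take $c$ to be the constant precoloring of $\R\setminus W$ by a single color, say blue. Any completion $\phi$ of $c$ is blue off the set $R\subseteq\{-1,0,1\}$ of points assigned red, so only $2^3=8$ completions arise, and I would treat them by the value of $|R|$. If $R=\emptyset$, every reflection preserves $\phi$. If $R=\{a\}$, the reflection $x\mapsto 2a-x$ fixes $a$ and permutes the blue points. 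If $R=\{a,b\}$, the reflection about $(a+b)/2$ interchanges $a$ and $b$ and permutes the blue points. If $R=\{-1,0,1\}$, the reflection $x\mapsto -x$ permutes $R$ and fixes the blue set setwise. In each case a nontrivial automorphism of $\R$ preserves the color classes, so $\phi$ is not distinguishing; as the four cases exhaust all completions, $c$ has no distinguishing completion and the proposition follows.

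The argument needs no computation, so the only point requiring care is the design of $W$ rather than an obstacle in the proof. Of the $2^{|W|}$ completions, all but the one that recolors every point of $W$ change at most two points, and a set of at most two points always has a center of symmetry; the exceptional completion recolors all of $W$, so we must choose $W$ itself to have a center of symmetry for the corresponding reflection to exist. With $|W|=3$ this is possible (for instance $\{-1,0,1\}$), and this is precisely the phenomenon that pushes the bound up to $4$; for $|W|\ge 4$ the matching upper bound proved later shows that every precoloring, in particular the monochromatic one, does extend.
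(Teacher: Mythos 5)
Your proof is correct and is essentially the paper's own argument: the paper likewise takes the symmetric blank set $\{-1,0,1\}$ with a monochromatic precoloring of the complement (red rather than blue) and runs the same case analysis on how many blanks receive the minority color, producing a preserving reflection in each case. The only differences are the immaterial swap of colors and your additional (correct) remarks on which sets are fixing sets.
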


\begin{proof}
Color $\R\setminus\{0,1,-1\}$ red and leave the remaining three elements blank.  We claim that this coloring cannot be extended to a distinguishing 2-coloring of $\R$.  Indeed, suppose that $c$ is an extension of this coloring that uses only colors red and blue.  If $c$ assigns all three elements of $\{0,1,-1\}$ the same color, then the reflection of $\R$ about 0 fixes $c$.  Further, if only one of the uncolored elements is colored blue, then the reflection of $\R$ about that point preserves $c$.  Hence we may assume that exactly two points in $\{-1,0,1\}$, call them $x$ and $y$, are colored blue.  It then follows that the reflection about $\frac{x+y}{2}$ preserves $c$.  This completes the proof.
\end{proof}

Our first main result shows that this lower bound is sharp.  

\begin{theorem}\label{thm:main1}
$\extD(\R) =4$.
\end{theorem}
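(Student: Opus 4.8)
The plan is to establish the upper bound $\extD(\R) \le 4$, which combined with Proposition~\ref{prop:extD_R_LB} yields equality. So I must show that for every fixing set $W \subseteq \R$ with $|W| \ge 4$ and every precoloring $c : \R \setminus W \to \{\red, \blue\}$, there is an extension to all of $\R$ that is distinguished by $\Aut(\R)$. The key observation is that the nontrivial automorphisms of $\R$ come in two families: translations $x \mapsto x + t$ for $t \neq 0$, and reflections $x \mapsto 2a - x$ about a point $a$. A $2$-coloring is distinguishing precisely when no translation and no reflection fixes it. I would first dispose of translations: since $W$ is finite with $|W| \ge 4$, the precoloring already occupies a cofinite set, and I would argue that we can always extend so that the resulting coloring is \emph{aperiodic}. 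In fact, if the precoloring on $\R \setminus W$ is already non-constant (which we can arrange unless it is monochromatic), one typically gets aperiodicity for free on a cofinite set that isn't periodic; the delicate case is when $c$ colors all of $\R \setminus W$ with a single color, and then the colors placed on the four points of $W$ must simultaneously kill all translations and all reflections.

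The main work is ruling out reflections. A reflection $\tau_a : x \mapsto 2a - x$ fixes the coloring iff $c(2a - x) = c(x)$ for all $x$. The strategy is: enumerate the (at most finitely many) ``dangerous'' centers $a$ for which the precoloring on $\R \setminus W$ is \emph{consistent} with being symmetric about $a$ — i.e., for every $x \in \R \setminus W$ with $2a - x \in \R \setminus W$, we have $c(x) = c(2a-x)$. For a generic center $a$, the set $(\R \setminus W) \cap (2a - (\R \setminus W))$ is cofinite and, unless $c$ has genuine symmetry, will contain a witnessing pair of differently-colored points; so only finitely many centers $a$ survive as candidates, namely those $a$ for which all potential witnesses happen to lie in $W \cup (2a - W)$. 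Since $|W| = m$, pairs $\{x, 2a-x\}$ with at least one endpoint in $W$ are limited, and a counting argument bounds the number of surviving centers in terms of $m$. Then I must choose a $2$-coloring of the $\le m$ points of $W$ that breaks the reflection about each surviving candidate center \emph{and} keeps the coloring aperiodic. Breaking a reflection about center $a$ means: find some point $x$ (preferably in $W$, or its image) where the coloring becomes asymmetric about $a$.

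The heart of the argument — and the reason the threshold is exactly $4$ — is the following counting/case analysis. With $|W| \ge 4$, I would show that the candidate reflection centers are few enough, and structured enough, that four free points always suffice to break all of them plus all translations. I expect the main obstacle to be the case where the precoloring is monochromatic (say all $\red$), because then \emph{every} center $a$ is a candidate unless the points of $W$ break symmetry about $a$, and moreover we need to avoid creating periodicity. Here one wants to color the four points of $W = \{w_1 < w_2 < w_3 < w_4\}$ so that (i) not all four get the same color — else $\tau_{(w_1+w_4)/2}$ might survive only if additionally $w_1 + w_4 = w_2 + w_3$, so really we need the multiset of colored-blue points to be asymmetric about every real $a$; (ii) the resulting pattern of blue points is aperiodic. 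A clean choice: color $w_1$ blue and $w_2, w_3, w_4$ red if the $w_i$ are in ``general position,'' and otherwise exploit an explicit asymmetry in the gaps $w_2 - w_1, w_3 - w_2, w_4 - w_3$; a single blue point among infinitely many red points is fixed by no translation and by a reflection only about that point, but that one reflection is killed as soon as the colored region $\R\setminus W$ is not all red — which fails in the monochromatic case, so instead one uses two blue points whose midpoint is not a symmetry center of $W$. I would organize this as a short case analysis on how symmetric the four-point set $W$ is, showing in each case that an appropriate assignment of $\le 2$ blue points among $w_1, w_2, w_3, w_4$ destroys all symmetries; the non-monochromatic case is easier because the (cofinite, generically asymmetric) precoloring already does most of the work and $W$ only needs to neutralize finitely many residual candidate centers, for which four degrees of freedom are comfortably enough.

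Finally I would assemble the pieces: given $W$ with $|W| \ge 4$ and an arbitrary precoloring, (1) list the finite set $A$ of candidate reflection centers compatible with $c|_{\R \setminus W}$; (2) if $c|_{\R\setminus W}$ is non-constant, note it already rules out all translations (a non-constant cofinite pattern has no nontrivial translational symmetry once we also choose values on $W$ appropriately — or adjust one point of $W$ to guarantee this), and choose the colors on $W$ to break each center in $A$, using that $|A|$ is bounded and $|W| \ge 4$; (3) if $c|_{\R \setminus W}$ is constant, run the dedicated four-point argument above. The only quantitative input needed is that $|A|$ stays small; I'd prove $|A| = O(m^2)$ or so by the pairing argument, which is far more than killed by having $m \ge 4$ binary choices together with the freedom in where witnesses land, but since we just need \emph{existence} of a good assignment rather than an extremal count, a direct constructive choice in each case is cleaner than optimizing the bound on $|A|$.
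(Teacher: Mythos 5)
Your lower bound is the paper's Proposition~\ref{prop:extD_R_LB}, but the upper bound argument has a genuine gap at its core. The claim that, outside the monochromatic case, only finitely many reflection centers $a$ are compatible with the precoloring on $\R\setminus W$ (and hence that $|A|=O(m^2)$) is false. Any precoloring that agrees with a periodic, reflection-symmetric coloring already admits infinitely many compatible centers: for example, if $c$ colors $x$ red when $\lfloor x\rfloor$ is even and blue when $\lfloor x\rfloor$ is odd, then every reflection about a point of $\tfrac12+\Z$ preserves $c$ on all of $\R\setminus W$, and the same phenomenon occurs for compatible translations. The hard instances of this problem are exactly the precolorings with rich residual symmetry (the sharpness example in Proposition~\ref{prop:extD_R_LB} is itself monochromatic), so the case your counting argument excludes is the case that carries all the difficulty. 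Your fallback for the monochromatic case is also not quite right as stated: with all of $\R\setminus W$ red, coloring two points of $W$ blue leaves the reflection about their midpoint color-preserving unless a red point of $W$ happens to reflect onto a blue one (three blue points not in arithmetic progression would work, but that is a different argument than the one you sketch).

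More fundamentally, the proposal never confronts the actual combinatorial obstacle: each of the $2^4$ extensions of $c$ to $W$ could a priori be preserved by a \emph{different} nontrivial automorphism, and breaking one candidate by recoloring a point of $W$ can create compatibility with another. A ``list the candidates and kill them one by one'' scheme gives no mechanism for showing these choices can be made consistently. The paper's proof is built precisely to handle this interaction: it isolates a blank $w_0$ whose reflection separates $W$ (Lemma~\ref{lma:divisibilityreflection}), shows at most two of the eight extensions to $W\setminus\{w_0\}$ permit a translation or the reflection about $w_0$ (Lemma~\ref{lma:nonforbidden}), and then, for any non-forbidden extension, composes the reflections $\taur$ and $\taub$ permitted by the two colorings of $w_0$ to obtain a nearly color-preserving translation $\sigma$ (Fact~\ref{prop:sigmatraversal}). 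Comparing two carefully chosen extensions $c_1,c_2$ forces the generated translation $\sigma_2$ to have finite order (Lemma~\ref{lma:sig2finite}), which is impossible in $\R$. Some structural argument of this kind --- controlling how the surviving symmetries of different extensions constrain one another --- is needed; the finiteness-of-candidates shortcut does not exist.
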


For a graph $G$, an injection $\phi : V(G) \to X$ is a \emph{$\Gamma$-faithful embedding} of $G$ (into $X$) if the following properties hold.  First, there is an isomorphism $\varphi$ between the subgroup $\Gamma'$ of $\Gamma$ that setwise stabilizes $\im(\phi)$ and $\Aut(G)$, where $\im(\phi)$ denotes the image of $\phi$.   Second, it is possible to choose $\varphi$ such that for each $\gamma\in\Gamma'$, the permutation $\sigma_{\gamma}$ of $\im(\phi)$ corresponding to the action of $\gamma$ satisfies $$ \sigma_{\gamma}(\phi(v)) = \phi( \varphi(\gamma)(v))$$ 
for all $v\in V(G)$.  
That is, the  permutation $\sigma_\gamma$ of $\phi(V(G))$ induced by an action $\gamma \in \Gamma$ that stabilizes $\phi(V(G))$ is equal to the permutation of $\phi(V(G))$ induced by the corresponding automorphism $\varphi(\gamma)$ of $G$.
%That is, the bijections of $X$ defined by the action of $\Gamma$ that stabilize $\phi(V(G))$ and corresponding automorphisms of $G$ act identically (under $\phi$)  on $V(G)$ and $\im(\phi)$.  
 Note that for any $n\ge 3$, any set of $n$ equally spaced points on the unit circle naturally corresponds to a faithful embedding of $C_n$ into $\S^1$.  
 Further, if $\ell$ divides $n$, $C_{\ell}$ has an $\Aut(C_n)$-faithful embedding into $V(C_n)$.  
 Finally, the unit cube $Q_3$ has a faithful embedding into $\S^2$.
 
%As examples, $C_n$  has a faithful embedding on $\S^1$ and the unit cube $Q_3$ has a faithful embedding on $\S^2$.

The following lemmas formalize two ways in which faithful embeddings can be used to provide useful bounds on $\extD(X)$.    

\begin{lemma}\label{lemma:faithful}
	Let $\Gamma$ be a group acting on a set $X$ and $k \geq D_\Gamma(X)$. If $G$ is a graph with a $\Gamma$-faithful embedding into $X$ such that $D(G) \leq k$, then	
\[
	\extD(X,\Gamma;k) \geq \extD(G;k).
\]
\end{lemma}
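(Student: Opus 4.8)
\noindent\emph{Proof sketch (plan).} The plan is to pull a ``bad'' precoloring instance for $G$ back to $X$ through the faithful embedding. The defining property of $\extD(X,\Gamma;k)$ is monotone in $m$ (a fixing set of size $\ge m'$ is also one of size $\ge m$ whenever $m\le m'$), so the values of $m$ satisfying it form an upward closed set whose least element is $\extD(X,\Gamma;k)$; hence it suffices to show that for every $m<\extD(G;k)$ there are a fixing set $W\subseteq X$ for $\Gamma$ with $|W|\ge m$ and a $k$-precoloring of $X\setminus W$ with no distinguishing extension. Fix such an $m$ (if $\extD(G;k)=0$ there is nothing to prove). By definition of $\extD(G;k)$ there are a fixing set $W_G\subseteq V(G)$ for $\Aut(G)$ with $|W_G|\ge m$ and a $k$-precoloring $c_G$ of $V(G)\setminus W_G$ with no extension to a distinguishing $k$-coloring of $G$.

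Let $\phi\colon V(G)\to X$ be the given embedding, let $\Gamma'\le\Gamma$ be the setwise stabilizer of $\im(\phi)$, and let $\varphi\colon\Gamma'\to\Aut(G)$ be the isomorphism from the definition, so that $\gamma(\phi(v))=\phi(\varphi(\gamma)(v))$ for all $\gamma\in\Gamma'$ and $v\in V(G)$. I would take $W:=\phi(W_G)$, so $|W|=|W_G|\ge m$, and define $c\colon X\setminus W\to[k]$ by $c(\phi(v)):=c_G(v)$ for $v\in V(G)\setminus W_G$ and $c(x):=1$ for all $x\in X\setminus\im(\phi)$. Coloring the ``background'' $X\setminus\im(\phi)$ monochromatically as part of the precoloring (rather than leaving it uncolored) is precisely what makes the next step work, and it forces $W$ to be contained in $\im(\phi)$.

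The heart of the argument is that $c$ admits no distinguishing extension. Given any extension $\tilde c\colon X\to[k]$ of $c$, the coloring $d:=\tilde c\circ\phi$ of $V(G)$ extends $c_G$ and hence is not distinguishing, so some $\alpha\in\Aut(G)\setminus\{\mathrm{id}\}$ fixes $d$; put $\gamma:=\varphi^{-1}(\alpha)\in\Gamma'\setminus\{\mathrm{id}\}$. On $\im(\phi)$ one has $\tilde c(\gamma(\phi(v)))=\tilde c(\phi(\alpha(v)))=d(\alpha(v))=d(v)=\tilde c(\phi(v))$, and on $X\setminus\im(\phi)$, which $\gamma$ maps into itself because $\gamma\in\Gamma'$, the coloring $\tilde c$ is constant. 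Hence $\gamma$ preserves $\tilde c$, so $\tilde c$ is not distinguishing; as $\tilde c$ was arbitrary, $c$ has no distinguishing extension.

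The step I expect to be the main obstacle is verifying that $W=\phi(W_G)$ really is a fixing set for $\Gamma$. Two of the cases are immediate from faithfulness: if $\gamma\in\Gamma$ fixes $\im(\phi)$ pointwise then $\gamma\in\Gamma'$ and $\varphi(\gamma)$ fixes every vertex of $G$, so $\gamma=\mathrm{id}$ (in particular $\im(\phi)$ is itself a fixing set for $\Gamma$); and if $\gamma\in\Gamma'$ fixes $\phi(W_G)$ pointwise then $\varphi(\gamma)$ fixes $W_G$ pointwise, forcing $\gamma=\mathrm{id}$ because $W_G$ is a fixing set for $\Aut(G)$. What remains is to exclude a nonidentity $\gamma\in\Gamma\setminus\Gamma'$ that fixes $\phi(W_G)$ pointwise; if necessary one can first enlarge $W$ by the set of points of $X\setminus\im(\phi)$ fixed by all of $\Gamma'$, which does not disturb the previous paragraph since each lifted $\gamma$ fixes those points. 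In the concrete settings to which the lemma is applied — equally spaced points on $\S^1$, equally spaced vertices of $C_n$, and the cube vertices in $\S^2$ — a fixing set $W_G$ for $\Aut(G)$ necessarily has at least two elements (at least three for the cube), and a short argument about which isometries of the ambient space can fix such a configuration shows that any $\gamma\in\Gamma$ fixing $\phi(W_G)$ pointwise already lies in $\Gamma'$, so the earlier cases finish the proof. Making this last step precise in full generality, or checking it carefully in each application, is where I expect most of the remaining work to lie.
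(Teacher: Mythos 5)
Your construction is essentially identical to the paper's proof: pull the non-extendable instance $(W_G,c_G)$ for $G$ back through $\phi$, precolor the background $X\setminus\im(\phi)$ with a single color, and observe that any extension restricts to a coloring of $G$ extending $c_G$, whose nontrivial color-preserving automorphism lifts through $\varphi^{-1}$ to a nontrivial element of $\Gamma'$ preserving the whole coloring (the background being monochromatic and $\Gamma'$-invariant). That part is correct and is exactly the paper's argument.

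The one place you go beyond the paper is in worrying whether $W=\phi(W_G)$ is a fixing set for all of $\Gamma$, not merely for $\Gamma'$ --- a condition that is genuinely required for $W$ to be a legal witness in the definition of $\extD(X,\Gamma;k)$, and which does not follow formally from faithfulness of $\phi$ alone. The paper's proof is silent on this point, so your instinct to flag it is sound. In every application the paper makes of this lemma ($C_\ell$ into $C_n$, $C_n$ into $\S^1$, $C_{10}$ into $\S^1$), the witness set has at least three elements, and the paper observes in Section~3 that every set of at least three elements of $\R$, $\R/\Z$, or $C_n$ is a fixing set, so the gap is harmless there. Be aware, though, that your proposed general repair --- enlarging $W$ by the points of $X\setminus\im(\phi)$ fixed by all of $\Gamma'$ --- does not obviously produce a fixing set for $\Gamma$ (an element of $\Gamma\setminus\Gamma'$ could still fix the enlarged set pointwise, and the added set may be empty); the honest options are either to add the fixing-set condition on $\phi(W_G)$ as a hypothesis or, as you suggest, to verify it in each application, which is what the paper implicitly relies on.
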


\begin{proof}
	Let $\phi : V(G) \to X$ be a $\Gamma$-faithful embedding of $G$ into $X$, $\ell = \extD(G;k)$ and let $\Gamma'$ be the subgroup of $\Gamma$ that setwise stabilizes $X\setminus\im(\phi)$.  Since $\extD(G;k) \geq \ell$, let $Y \subseteq V(G)$ be a set of size $\ell-1$ such that there is a $k$-coloring $c : V(G)\setminus Y \to [k]$ which does not extend to a distinguishing $k$-coloring of $G$.  Color each element of $X\setminus\im(\phi)$ with color 1, and each element $x\in \im(\phi)\setminus Y$ with $c(\phi^{-1}(x))$.  It then follows, from the definition of $c$ and $\Gamma'$, that this coloring of $X\setminus Y$ cannot extend to a distinguishing $k$-coloring of $X$, and the result follows.  
\end{proof}

\begin{lemma}\label{lemma:faithful2}
	Let $\Gamma$ be a group acting on a set $X$ and $k \geq D_\Gamma(X)$.
	If  $G$ is a graph with  a  $\Gamma$-faithful embedding into $X$ such that $D(G) > k$, then	
\[
	\extD(X,\Gamma;k) \geq 1 + |G|.
\]
\end{lemma}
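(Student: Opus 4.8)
The plan is to produce a single fixing set $W \subseteq X$ with $|W| = |G|$ and a $k$-coloring of $X \setminus W$ that cannot be extended to a distinguishing $k$-coloring; by the definition of $\extD(X,\Gamma;k)$ this immediately gives $\extD(X,\Gamma;k) \geq |G| + 1$. The obvious candidate is $W = \im(\phi)$, where $\phi : V(G) \to X$ is the hypothesized $\Gamma$-faithful embedding, so that $|W| = |V(G)| = |G|$.

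First I would verify that $\im(\phi)$ is a fixing set for $\Gamma$, since only fixing sets are eligible in the definition of $\extD$. If $\gamma \in \Gamma$ fixes $\im(\phi)$ pointwise then it stabilizes $\im(\phi)$ setwise, so $\gamma$ belongs to the subgroup $\Gamma'$ appearing in the definition of a faithful embedding. Writing $\alpha = \varphi(\gamma) \in \Aut(G)$, the defining identity $\sigma_\gamma(\phi(v)) = \phi(\alpha(v))$ combined with $\sigma_\gamma(\phi(v)) = \phi(v)$ and the injectivity of $\phi$ forces $\alpha(v) = v$ for every $v \in V(G)$; hence $\alpha$, and therefore $\gamma = \varphi^{-1}(\alpha)$, is the identity. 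So the pointwise stabilizer of $\im(\phi)$ in $\Gamma$ is trivial.

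Next I would color $X \setminus \im(\phi)$ entirely with color $1$ and show this precoloring has no distinguishing extension. Suppose $\hat c : X \to [k]$ extends it. Pulling back along $\phi$ gives a $k$-coloring $c'$ of $V(G)$ with $c'(v) = \hat c(\phi(v))$. Since $D(G) > k$, the coloring $c'$ is not distinguishing for $G$, so some non-identity $\alpha \in \Aut(G)$ preserves $c'$; let $\gamma = \varphi^{-1}(\alpha)$, a non-identity element of $\Gamma'$. On $\im(\phi)$ the action of $\gamma$ is $\sigma_\gamma$, and for each $v$ we have $\hat c(\gamma(\phi(v))) = \hat c(\phi(\alpha(v))) = c'(\alpha(v)) = c'(v) = \hat c(\phi(v))$. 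On $X \setminus \im(\phi)$, which $\gamma \in \Gamma'$ stabilizes setwise, $\hat c$ is constant, so $\gamma$ preserves $\hat c$ there too. Thus $\gamma$ is a non-identity element of $\Gamma$ preserving $\hat c$, contradicting that $\hat c$ distinguishes $X$. Hence the precoloring does not extend, and so $m = |G|$ (and every smaller value) fails the condition defining $\extD(X,\Gamma;k)$, giving $\extD(X,\Gamma;k) \geq |G| + 1$.

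There is no real obstacle here: the proof is essentially a direct unwinding of the definition of a $\Gamma$-faithful embedding. The two points that need attention are confirming that $\im(\phi)$ qualifies as a fixing set (so that it is a legitimate choice of $W$) and using a constant coloring outside $\im(\phi)$, which guarantees that every element of $\Gamma'$ — in particular the specific $\gamma$ obtained above — automatically preserves the coloring on $X \setminus \im(\phi)$.
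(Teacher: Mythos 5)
Your proof is correct and follows essentially the same route as the paper: take $W=\im(\phi)$, color $X\setminus\im(\phi)$ with a single color, and observe that any distinguishing extension would pull back to a distinguishing $k$-coloring of $G$, contradicting $D(G)>k$. The only difference is that you explicitly verify $\im(\phi)$ is a fixing set and spell out the correspondence between the color-preserving automorphism of $G$ and a non-identity element of $\Gamma$, details the paper's one-line proof leaves implicit.
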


\begin{proof}
	As in Lemma \ref{lemma:faithful}, let $\phi : V(G) \to X$ be a $\Gamma$-faithful embedding of $G$ into $X$, and let $\Gamma'$ be the subgroup of $\Gamma$ that setwise stabilizes $X'=X\setminus \im(\phi)$.  If we color all of $X'$ using color 1, then it is not possible to extend this precoloring to a distinguishing $k$-coloring of $X$, as such a $k$-coloring on $\im(\phi)$ would induce a distinguishing $k$-coloring of $G$.
\end{proof}

The distinguishing number for graphs was introduced (in the guise of an entertaining problem) and determined for finite cycles in \cite{Rdist79}.
Of particular interest here is the observation that when $n \geq 6$ we can distinguish $C_n$ with two colors, but $D(C_3)=D(C_4)=D(C_5)=3$. % using only three instances of the \textit{minority} color, by which we mean the color in a distinguishing coloring that is used the fewest number of times.
%\begin{figure}[htp]
%\centering
%\begin{lpic}[]{"CycleDistinguishingExample"(30mm,)}
%\end{lpic}
%\caption{\label{fig:cycledist}Distinguishing a cycle with only three minority positions.}
%\end{figure}
Applying Lemma \ref{lemma:faithful2} with $C_3$, $C_4$, or $C_5$ gives rise to the following conjecture. 
\begin{conjecture}\label{conj:main}
\[
	\extD(C_n) = \begin{cases} 4 & \text{ if }n\not\equiv 0 \pmod {4 \text{ or }5}, \\ 
	5 & \text{ if }n\equiv 0 \pmod 4, \\
	6 & \text{ if }n\equiv 0 \pmod 5. \end{cases}
\]
\end{conjecture}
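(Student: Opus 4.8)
\textbf{Proof proposal for Conjecture~\ref{conj:main}.}

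The plan is to establish matching lower and upper bounds in each of the three residue classes. The lower bounds are already essentially in hand: for $n \equiv 0 \pmod 4$ the cycle $C_4$ embeds $\Aut(C_n)$-faithfully into $V(C_n)$ (as $C_4 = C_\ell$ with $\ell = 4 \mid n$), and since $D(C_4) = 3 > 2 = D(C_n)$ for such $n \geq 8$, Lemma~\ref{lemma:faithful2} gives $\extD(C_n) \geq 1 + 4 = 5$; the analogous argument with $C_5$ gives $\extD(C_n) \geq 6$ when $5 \mid n$. For the generic case one uses Lemma~\ref{lemma:faithful}: take the $C_3$, $C_4$, or $C_5$ that sits faithfully inside $C_n$ together with the corresponding computation $\extD(C_3) = \extD(C_4) = \extD(C_5) = 4$ (these small cases would need to be checked directly, mimicking Proposition~\ref{prop:extD_R_LB} — color all but three well-chosen vertices with one color and exhibit a fixing automorphism for every extension), yielding $\extD(C_n) \geq 4$ when $n$ has no such small divisor; and a direct argument analogous to Proposition~\ref{prop:extD_R_LB} handles the remaining $n$ not covered by a faithful subcycle. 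Actually the cleanest route to the generic lower bound $\extD(C_n) \ge 4$ is to imitate Proposition~\ref{prop:extD_R_LB} directly: leave three vertices blank, chosen so that every $2$-coloring of them either monochromatically extends a reflection axis, or places the single off-color vertex on an axis, or places the two off-color vertices symmetrically about an axis or a rotation.

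The real work is the upper bounds. For $n \not\equiv 0 \pmod{4 \text{ or } 5}$ I would show that any $2$-precoloring of $C_n$ with a fixing set $W$ of size $4$ can be extended to a distinguishing $2$-coloring. The symmetry group $\Aut(C_n) = D_n$ has $n$ rotations and $n$ reflections, so one must kill all $2n-1$ nontrivial elements using only the $4$ free vertices (plus whatever asymmetry the precoloring already provides). The strategy: a $2$-coloring of $C_n$ fails to be distinguishing iff its color pattern, read as a cyclic binary word, has a nontrivial rotational or reflective symmetry; since we have $n-4$ forced positions, only finitely many ``bad'' symmetries can possibly survive, and each bad symmetry imposes a linear constraint (an equality between two of the four free positions, or a fixed value at one of them). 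One then argues that $4$ free positions give enough degrees of freedom to simultaneously violate every such constraint, using a counting/parity argument that the number of surviving candidate symmetries is small relative to the $2^4$ colorings available — and here the hypothesis $n \not\equiv 0 \pmod{4,5}$ enters, since those divisibilities are exactly what would let a too-symmetric forced pattern conspire with the free vertices.

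For $n \equiv 0 \pmod 4$ (resp.\ $n \equiv 0 \pmod 5$) the claim is that $5$ (resp.\ $6$) free vertices always suffice; the lower bound already shows $5$ (resp.\ $6$) are sometimes necessary, so one only needs the extension statement, and the argument is the same flavor as above but with one (resp.\ two) extra free vertex absorbing the extra near-symmetry coming from the faithful $C_4$ (resp.\ $C_5$). I expect the main obstacle to be the generic upper bound: one must handle fixing sets $W$ of size exactly $4$ in \emph{arbitrary} position on the cycle — if the four free vertices are clustered, the forced part of the cycle is a long symmetric-looking arc and many reflections are ``almost'' preserved, so the case analysis on the relative positions of the four free vertices (and the induced constraints on which reflection axes and rotation orders can survive) is where the delicate work lies. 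A secondary difficulty is that the framework here is the circle $\R/\Z$ rather than $C_n$ per se in the companion results, so one must be careful that the discrete cycle argument is self-contained and does not silently borrow continuity; but since $\Aut(C_n)$ is finite this should be manageable.
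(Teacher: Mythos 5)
First, a point of orientation: the statement you are proving is labeled as a \emph{conjecture} in the paper, and the paper does not prove it. The paper establishes only the lower bounds (via Lemmas~\ref{lemma:faithful} and~\ref{lemma:faithful2} and the explicit non-extendible precolorings of Figure~\ref{fig:sharp}), the matching upper bound in the special case where every prime divisor of $n$ is at least $7$ (Theorem~\ref{thm:main3}), and the general bound $\extD(C_n)\le 16$ (Theorem~\ref{thm:main2}). So your proposal must stand on its own. Your lower-bound discussion is essentially the paper's and is mostly sound, with one technical slip: you cannot invoke Lemma~\ref{lemma:faithful} with $C_3$, $C_4$, $C_5$ and ``$\extD(C_3)=\extD(C_4)=\extD(C_5)=4$,'' because that lemma requires $D(G)\le k$, whereas here $k=2<3=D(C_\ell)$ for $\ell\in\{3,4,5\}$; moreover those $\extD$ values would be computed with respect to $3$ colors and so would not transfer to the $2$-color game. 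Lemma~\ref{lemma:faithful2} is the correct tool for $\ell\in\{4,5\}$, and your fallback for the generic case --- imitating Proposition~\ref{prop:extD_R_LB} with three blanks in arithmetic progression --- does correctly give $\extD(C_n)\ge 4$.

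The genuine gap is the upper bound, which is the entire content of the conjecture. Your plan is to note that each surviving symmetry imposes a constraint on the free vertices and then to argue by a ``counting/parity argument'' that the $2^4$ available colorings of the blanks cannot all be excluded. This does not work as stated: $\Aut(C_n)$ has $2n-1$ nontrivial elements, and a priori each of the $16$ completions could permit a \emph{different} reflection, so no union bound of colorings against constraints closes the argument; you would need to show structurally that only a bounded number of automorphisms can ever be permitted by \emph{some} completion. That is exactly what the paper's partial result (Theorem~\ref{thm:divisibility}) does, and it is delicate: one shows there are at most two ``forbidden'' extensions to $W\setminus\{w_0\}$, that any surviving non-forbidden extension generates a near-color-preserving rotation $\sigma=\taub\circ\taur$ satisfying Fact~\ref{prop:sigmatraversal}, and then derives a contradiction from the lattice interaction of two such rotations. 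Crucially, even this machinery requires the Divisibility Condition on $W$, which can fail precisely when $n$ is divisible by $2$, $3$, $4$, or $5$ --- and $2\mid n$ and $3\mid n$ are cases where the conjecture still asserts the answer is $4$. Your sketch offers no method for, say, four blanks two of which are antipodal on $C_{2m}$, which is exactly where the difficulty lies. As written, the proposal reproves (most of) the known lower bounds and leaves the open part of the conjecture open.
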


 The uncolored elements and precolorings in each diagram of Figure~\ref{fig:sharp} establish the sharpness of this conjecture.  Note that these precolorings also demonstrate that a non-extendible coloring need not use only one color.  

\begin{figure}[htp]
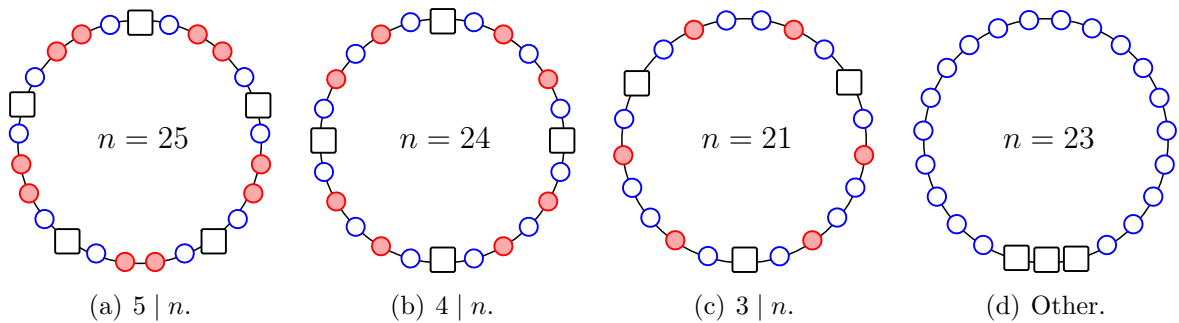

\centering
\mbox{
\subfigure[$5 \mid n$.]{
\begin{lpic}[]{"SharpExampleDivisor5"(36mm,)}
\lbl[]{24,24;$n=25$}
\end{lpic}
}
\subfigure[$4 \mid n$.]{
\begin{lpic}[]{"SharpExampleDivisor4"(36mm,)}
\lbl[]{24,24;$n=24$}
\end{lpic}
}
\subfigure[$3 \mid n$.]{
\begin{lpic}[]{"SharpExampleDivisor3"(36mm,)}
\lbl[]{24,24;$n=21$}
\end{lpic}
}
\subfigure[Other.]{
\begin{lpic}[]{"SharpExampleOther"(36mm,)}
\lbl[]{24,24;$n=23$}
\end{lpic}
}
}
\caption{\label{fig:sharp}Sharpness examples for Conjecture \ref{conj:main}.}
\end{figure}

We verify Conjecture \ref{conj:main} in infinitely many cases, based on the prime factorization of $n$.

\begin{theorem}\label{thm:main3}
If the minimum prime divisor of $n$ is at least $7$, then $\extD(C_n) = 4$.
\end{theorem}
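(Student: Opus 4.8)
We want to show that if the smallest prime divisor of $n$ is at least $7$, then $\extD(C_n)=4$. Since $n\not\equiv 0\pmod{4}$ and $n\not\equiv 0 \pmod 5$ in this case, Lemma \ref{lemma:faithful2} applied with $C_3$ (which faithfully embeds into $C_n$ since $3\mid n$ is \emph{not} needed — rather we use that $D(C_3)=3>2$ only when $3\mid n$; when $3\nmid n$ we instead get the lower bound $\extD(C_n)\ge 4$ from Proposition-style arguments mirroring \ref{prop:extD_R_LB}) gives the lower bound $\extD(C_n)\ge 4$. So the real content is the upper bound: **every $4$-element fixing set $W\subseteq V(C_n)$ and every $2$-coloring $c$ of $V(C_n)\setminus W$ extends to a distinguishing $2$-coloring.** I would first recall that $\Aut(C_n)$ is dihedral of order $2n$, consisting of $n$ rotations $\rotation{j}$ and $n$ reflections; a $2$-coloring is distinguishing iff it is fixed by no nontrivial rotation and no reflection. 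A rotation $\rotation{j}$ fixes $c$ iff $c$ is periodic with period $\gcd(j,n)$; since every divisor $d$ of $n$ with $1<d<n$ satisfies $d\ge 7$ (as the least prime divisor is $\ge 7$), the only periods we must rule out are proper divisors of $n$, all of which are at least $7$. This is exactly where the hypothesis is used: it gives us a lot of ``room'' between consecutive forbidden symmetries.

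The core of the argument is a counting/probabilistic or greedy scheme for choosing the colors on the (at most) four uncolored vertices $w_1,w_2,w_3,w_4$. I would set up the proof as follows. \emph{Step 1 (reduce to avoiding a bounded list of bad symmetries).} The precoloring $c$ on $V(C_n)\setminus W$ may already be fixed by some rotations and reflections; but any rotation fixing the extension must in particular be ``compatible'' with $c$ on the colored part, and since $|W|=4$ is tiny compared to the period, a rotation of period $d\le n/7$ that is consistent with $c$ off $W$ forces $c$ off $W$ to be $d$-periodic — a strong structural constraint we can exploit. List the set $R$ of rotations and the set $F$ of reflections that are consistent with $c$ on $V(C_n)\setminus W$; we must choose colors on $W$ killing all of them, while $W$ being a fixing set guarantees no symmetry fixes all of $W$ pointwise, so each bad symmetry is killable by \emph{some} assignment. \emph{Step 2 (handle reflections).} A reflection $\tau$ either has both its fixed points (or fixed vertex, depending on parity) among colored vertices — in which case the colored part already may or may not be symmetric — or it pairs up vertices; I would argue that for all but a bounded number of reflections, $\tau$ pairs two uncolored vertices or pairs an uncolored vertex with a colored one of a forced color, so we can destroy those by choosing $w_i$-colors appropriately, and the remaining $O(1)$ reflections that are ``internal'' to the colored part were already broken by $c$ unless $c$ is globally reflective off $W$, a case handled separately. \emph{Step 3 (handle rotations and combine).} Rotations of period $d$ with $d$ large: a single uncolored vertex $w_i$ has its orbit under such a rotation meeting the colored set, so the colors of the orbit are forced by $c$; if all four $w_i$ can be colored to agree with this forced pattern simultaneously the precoloring was $d$-periodic to begin with, and then we use a separate case analysis (there are few such $d$, pairwise with large lcm, so we can diagonalize against all of them at once with $2^4=16$ color choices on $W$). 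Finally, a short union-bound / inclusion–exclusion count shows $16$ choices suffice to simultaneously avoid all members of $R\cup F$.

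The main obstacle, as I see it, is \textbf{Step 3 combined with Step 2}: making the union bound actually close. There can be $\Theta(\log n)$ distinct proper divisors $d$ of $n$, hence potentially many rotations to kill, and naively each color choice on $W$ might fail to break a given rotation with ``probability'' as large as $2^{-|W\cap\text{(one orbit representative)}|}$, which need not be small if some orbit of the rotation misses $W$ entirely. The resolution must be the structural observation that if a rotation's orbit structure is nearly disjoint from $W$, then $c$ itself is almost $d$-periodic, and the very few ways to complete it to a genuinely $d$-periodic coloring are pinned down — so rather than a crude union bound one argues: for each bad rotation $\rotation{j}$, the set of ``completions of $c$ on $W$ that are fixed by $\rotation{j}$'' is either empty or an affine subcube of $\{0,1\}^W$ of codimension $\ge 1$, and these at-most-$\log n$ subcubes, together with the $O(1)$ reflection-subcubes, cannot cover all of $\{0,1\}^4$ unless $n$ has a special form excluded by the least-prime-$\ge 7$ hypothesis (e.g., it forces distinct divisors to ``interact'' on $W$, which the $\ge 7$ spacing prevents). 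I would isolate this covering lemma — "$\le c\log n$ proper affine subcubes of $\{0,1\}^4$, each arising from a divisor $d\ge 7$, do not cover the cube" — as the technical heart, proving it by showing the subcubes coming from different divisors are ``independent'' enough (their defining linear constraints on the four coordinates are in general position) precisely because the divisors are pairwise far apart modulo the positions of $w_1,\dots,w_4$.
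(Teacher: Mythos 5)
Your reduction of the theorem to its two halves is right: the lower bound comes from a three-blank example (note that Lemma~\ref{lemma:faithful2} with $C_3$ is \emph{not} available here, since $3\nmid n$; one instead adapts the construction of Proposition~\ref{prop:extD_R_LB} to the cycle, as in Figure~\ref{fig:sharp}(d)), and the content is the upper bound for an arbitrary $4$-element $W$. But your plan for the upper bound has a genuine gap at exactly the point you flag as ``the main obstacle.'' The covering lemma you isolate --- that the affine subcubes of $\{0,1\}^W\cong\{0,1\}^4$ consisting of extensions fixed by some surviving rotation or reflection cannot cover the whole cube --- is not true at the level of generality you state it: each bad reflection is only guaranteed to impose \emph{one} constraint (since $W$ is a fixing set, the reflection moves at least one blank), so its bad set can be a codimension-$1$ subcube of size $8$, and two such subcubes (e.g.\ the sets $\{c^*:c^*(w_1)=\red\}$ and $\{c^*:c^*(w_1)=\blue\}$) already cover all $16$ extensions. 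No union bound or general-position argument over ``at most $\log n$'' constraints can close this; what is needed is a structural argument about \emph{which} reflections and rotations can simultaneously be consistent with $c$ off $W$, and you give no such argument --- you only assert that ``the resolution must be'' one.

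That structural argument is the entire content of the paper's proof. The hypothesis on the least prime divisor is used not to space out divisors but to make the Divisibility Condition ($(W+\tfrac{i}{k})\cap W=\varnothing$ for $2\le k\le 5$) hold vacuously for \emph{every} $4$-set $W\subset V(C_n)$, whereupon Theorem~\ref{thm:main3} is immediate from Theorem~\ref{thm:divisibility}. The proof of that theorem proceeds very differently from a covering count: one finds $w_0\in W$ whose reflection sends $W\setminus\{w_0\}$ off $W$ (Lemma~\ref{lma:divisibilityreflection}), shows that at most two of the eight extensions of $c$ to $W\setminus\{w_0\}$ are forbidden (Lemma~\ref{lma:nonforbidden}), and then, for a non-forbidden extension $c_1$ chosen by extremal criteria, composes the two reflections permitted by its two completions at $w_0$ to obtain a translation $\sigma_1$ with a rigid, nearly color-preserving structure (Fact~\ref{prop:sigmatraversal}); a second coloring $c_2$ obtained by a controlled recoloring yields a second translation $\sigma_2$, and the incompatibility of $\sigma_1$ and $\sigma_2$ gives the contradiction. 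To salvage your approach you would need to prove that only a bounded number of symmetries survive the precoloring and that their constraint subcubes are genuinely independent; that is where all the work lies, and your proposal does not supply it.
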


We also show that for all $n$, both $\extD(\S^1)$ and $\extD(C_n)$ are bounded by an absolute constant. 
%\stephen{Do we mean bounded here?  Clearly the functions are not constant as $n$ increases.}

\begin{theorem}\label{thm:main2}
For all $n\ge 3$, $\extD(C_n) \leq \extD(\S^1) \leq 16$.
\end{theorem}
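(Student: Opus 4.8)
The inequality $\extD(C_n) \le \extD(\S^1)$ should follow from Lemma~\ref{lemma:faithful}: any $n \ge 3$ equally spaced points on the circle give a $\Aut(\S^1)$-faithful embedding of $C_n$ into $\S^1$, and since $D(C_n) \le 3$ while $D(\S^1) = 2$, we need to be a little careful about which $k$ we apply the lemma with; the cleanest route is to observe directly that a non-extendible precoloring of $C_n$ with $\le 16$ blanks, pushed forward onto the circle (with all non-image points colored $1$) as in the lemma's proof, cannot be completed. So the real content is the bound $\extD(\S^1) \le 16$.

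The plan is to show that if $W \subseteq \R/\Z$ is a fixing set with $|W| \ge 16$, then every $2$-coloring $c$ of $(\R/\Z)\setminus W$ extends to a distinguishing coloring. Fix such a $c$. The automorphisms to kill are the rotations $\rho_\alpha : x \mapsto x+\alpha$ (for $\alpha \ne 0$) and the reflections $r_\beta : x \mapsto \beta - x$. First I would dispose of the rotations cheaply: because $c$ is already defined on a co-finite (in fact, co-at-most-$16$) set and the circle is infinite, the precolored part already contains, say, a point $p$ colored red with a point colored blue arbitrarily close to it, so $\rho_\alpha$ is broken by the precoloring for \emph{every} $\alpha \ne 0$ as soon as $c$ is nonconstant on $(\R/\Z)\setminus W$; and if $c$ is constant there, then since $|W|\ge 16 \ge 1$ and $W$ nonempty, coloring one blank point with the opposite color breaks all rotations. (One must check a rotation can't have all its nontrivial orbits monochromatic when there are only finitely many exceptional points — true since a nontrivial rotation has an orbit meeting the monochromatic co-finite part in two points of the same color adjacent to... — I'd formalize this as: a rotation fixing $c$ forces $c$ constant on a dense set, impossible.) So after a bounded-size modification all rotations are dead.

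The hard part is the reflections: there is a whole circle's worth of them (parameterized by $\beta \in \R/\Z$, really by $\beta/2 \in \R/(\tfrac12\Z)$, the axis), and a reflection $r_\beta$ fixes $c$ iff $c(\beta - x) = c(x)$ for all $x$, i.e. $c$ is symmetric about the axis $\beta/2$. The strategy is to \textbf{spend the blanks to install an asymmetric "gadget."} Since $W$ is a fixing set, it is infinite or large enough; in particular $W$ has $\ge 16$ points, so I can select from $W$ a small configuration of points whose mutual distances, once colored appropriately (a carefully chosen pattern of reds and blues — e.g. points at positions forming a set $T$ with the property that $T$ and $-T+\beta$ never coincide), rules out every reflection simultaneously. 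Concretely: pick three blank points $w_1,w_2,w_3 \in W$ and color $w_1$ red; any reflection fixing the final coloring must map the red-blank-point set to itself, so it must permute $\{w_1\}$ if $w_1$ is the unique point we force to be an "isolated red among blanks," pinning the axis to pass through $w_1$ or through a midpoint $w_i w_j$ — finitely many axes. Then a second gadget (the remaining $\ge 13$ blanks, or pre-existing color asymmetry in $c$) kills those finitely many surviving reflections, exactly as one kills the finitely many bad rotations. The budget $16$ comes from: a bounded number (say $\le 3$) to pin the reflection axis to finitely many candidates, then $\le 1$ per surviving candidate reflection (of which there are boundedly many, $\le 3$), plus slack; the authors presumably track this carefully to land on $16$, and I would allocate the blanks in two rounds — first "localize the symmetry," then "destroy the finitely many localized symmetries" — double-counting conservatively.

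The main obstacle, and the step I would spend the most care on, is the simultaneous control of the uncountable family of reflections with only finitely many free points: one cannot break them one at a time. The resolution is the asymmetric-gadget idea above — use a first small block of blanks whose colors are chosen so that the \emph{colored set restricted to a neighborhood} is asymmetric in a way that forces any color-preserving reflection to be one of finitely many (those whose axis respects the gadget's own symmetries), thereby reducing the uncountable problem to a finite one, after which the earlier rotation-style argument finishes. Getting the gadget small enough that the total, including the finite clean-up and the interaction with the possibly-adversarial precoloring $c$ on $W^c$, stays at $16$ is the delicate bookkeeping; I expect the proof in Sections~\ref{sec:overview}--\ref{sec:proof} to set up exactly this two-phase framework with the "uncolored elements" analysis of Section~\ref{sec:uncolored} handling the gadget.
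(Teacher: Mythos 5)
There is a genuine gap, in fact two. First, your ``cheap disposal'' of rotations is false: a nontrivial rotation $\rho_\alpha$ preserves a $2$-coloring if and only if the coloring is constant on each $\rho_\alpha$-orbit, and a coloring that is constant on each (dense) orbit need not be constant on the circle --- different orbits may receive different colors, and for rational $\alpha$ any $\tfrac1q$-periodic nonconstant coloring is preserved. So nonconstancy of the precoloring does not break any rotation, and the adversary can arrange precolorings some of whose extensions permit translations. The paper has to work for this: Lemma~\ref{lma:nonforbidden} shows that at most one extension of the precoloring to $W\setminus\{w_0\}$ permits a translation, and the proof leans essentially on the Divisibility Condition (no two blanks differ by $\tfrac{i}{k}$ for $k\le 5$). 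Second, your reflection gadget rests on the claim that a color-preserving reflection of the \emph{final} coloring must map the set of (originally) blank points to itself, so that an ``isolated red among blanks'' pins the axis. That is not an invariant: once the blanks are colored, a reflection only has to preserve colors, not the set $W$, so it can send your distinguished red blank to any red point of the circle. Since the reflections form an uncountable family and you control only finitely many colors, this localization step collapses and the clean-up phase never becomes finite.

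The paper's route is quite different and worth contrasting with your plan. The bound $16$ has nothing to do with budgeting gadget points: by pigeonhole (Lemma~\ref{lma:16}), any $16$ points of $\R/\Z$ contain $4$ lying in an arc of length $\tfrac15$, and such a quadruple satisfies the Divisibility Condition. All the work is then in Theorem~\ref{thm:divisibility}, which handles just $4$ blanks: one chooses $w_0\in W$ whose reflection maps the other blanks off $W$ (Lemma~\ref{lma:divisibilityreflection}), shows at most two extensions to $W\setminus\{w_0\}$ are ``forbidden,'' and observes that for any non-forbidden extension the two ways of coloring $w_0$ permit two \emph{distinct} reflections $\taur,\taub$ whose composition is a translation $\sigma$ that is color-preserving except near $w_0$ (Fact~\ref{prop:sigmatraversal}). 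Comparing two such extensions chosen by the extremal criteria (C0)--(C2) forces the associated translations $\sigma_1,\sigma_2$ into an impossible configuration. Your instinct that the uncountable family of reflections must be reduced to a finite problem is right, but the reduction is algebraic (two permitted reflections compose to a nearly color-preserving translation, which is then shown to be impossible), not geometric asymmetry of the blank set.
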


Note that the inequality  $\extD(C_n) \leq \extD(\S^1)$ follows from Lemma \ref{lemma:faithful}.  As $C_5$ has a faithful embedding into $C_{10}$, $\extD(C_{10}) \geq 6$ by Lemma \ref{lemma:faithful2}, and since $C_{10}$ has a faithful embedding into $\S^1$, $\extD(\S^1) \geq 6$ by Lemma \ref{lemma:faithful}.  We  conjecture that this lower bound is correct.

\begin{conjecture}\label{conj:S1}
$\extD(\S^1)=6$.
\end{conjecture}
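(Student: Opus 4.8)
The plan is to pair the lower bound already derived in the text with a proof of the matching upper bound $\extD(\S^1)\le 6$. The lower bound is in hand: $C_5$ has an $\Aut(C_{10})$-faithful embedding into $C_{10}$ with $D(C_5)=3>2=D(C_{10})$, so Lemma~\ref{lemma:faithful2} gives $\extD(C_{10})\ge 6$; since $C_{10}$ embeds faithfully into $\S^1$ and $D(C_{10})=2=D(\S^1)$, Lemma~\ref{lemma:faithful} yields $\extD(\S^1)\ge\extD(C_{10})\ge 6$. Hence everything rests on showing that whenever $W\subseteq\S^1$ is a fixing set with $|W|=6$, every $2$-coloring $c_0:\S^1\setminus W\to\{1,2\}$ extends to a coloring that is distinguishing under rotations and reflections.

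First I would organize the symmetries the Hero must defeat. Write $\Gamma=\Aut(\S^1)$, acting by rotations $\rotation{\alpha}:x\mapsto x+\alpha$ and reflections $\tau:x\mapsto\beta-x$. Since $W$ is finite, any $\gamma$ fixing the completed coloring $c'$ must satisfy $c_0\circ\gamma=c_0$ off the finite set $W\cup\gamma^{-1}(W)$; call such a $\gamma$ a \emph{near-symmetry}. The near-symmetries form a subgroup $\Sigma\le\Gamma$, and I would split on its rotation part $P=\Sigma\cap SO(2)$, which is a subgroup of $\R/\Z$ and hence either a finite cyclic group $\langle 1/m\rangle$ or a dense subgroup. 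Because $W$ is a fixing set, no nontrivial symmetry fixes $W$ pointwise, which is what gives the Hero leverage against each near-symmetry.

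When $P=\langle 1/m\rangle$ is finite, $\Sigma$ is a finite cyclic or dihedral group, so only finitely many rotations $\rotation{j/m}$ and finitely many reflections are near-symmetries; every other symmetry fails to preserve $c_0$ at some point outside $W\cup\gamma^{-1}(W)$ and is therefore automatically broken. The task reduces to choosing the six Hero colors so that none of these finitely many near-symmetries fixes $c'$. This is a circular analogue of the cycle precoloring-extension problem, and it is genuinely intertwined with Conjecture~\ref{conj:main}, since $\extD(C_m)\le\extD(\S^1)$ by Lemma~\ref{lemma:faithful}; the value $6$ is forced precisely by the obstruction $D(C_5)=3$, as realized by the $5\mid n$ sharpness example of Figure~\ref{fig:sharp}. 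I would adapt the $C_n$ arguments (as in the proof of Theorem~\ref{thm:main3}) to show a single $2$-coloring of $W$ simultaneously breaks all of them, handling reflections by noting that each pairs up points of $W$ and is broken by any asymmetry across its axis.

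The dense case is where the real difficulty lies and is, I expect, the main obstacle — and the reason the statement is posed as a conjecture rather than proved alongside Theorem~\ref{thm:main2}. If some $\rotation{\alpha}\in\Sigma$ with $\alpha$ irrational actually fixes the completed coloring, then $c'$ is invariant under the infinite group $\langle\alpha\rangle$, which forces $c_0$ to be constant on the dense $\alpha$-orbits that miss $W$; such colorings are necessarily non-measurable and resist any continuity argument. The Hero breaks such a rotation by coloring some point of $W$ off the ambient value of its orbit, but he must do so simultaneously for every dangerous angle using only six points, and likewise for every dangerous reflection. The crux — the gap between the constant bound of Theorem~\ref{thm:main2} and the exact value $6$ — is to prove that these pathological orbit-constant colorings cannot impose more obstructions than six free points can absorb, for instance by showing that the dangerous rotations and reflections can always be defeated by one carefully chosen $2$-coloring of $W$. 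I expect resolving this to require a genuinely new idea beyond the finite case analysis that yields the weaker bound $16$.
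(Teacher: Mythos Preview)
This statement is Conjecture~\ref{conj:S1}: the paper does \emph{not} prove it, and there is no proof to compare your proposal against. The paper establishes only the lower bound $\extD(\S^1)\ge 6$ (exactly as you reproduce it, via Lemmas~\ref{lemma:faithful} and~\ref{lemma:faithful2}) and the upper bound $\extD(\S^1)\le 16$ of Theorem~\ref{thm:main2}.

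Your proposal is not a proof either, and you say so yourself. In the finite near-symmetry case you write that the task ``is genuinely intertwined with Conjecture~\ref{conj:main}'' and that you ``would adapt the $C_n$ arguments''; but Conjecture~\ref{conj:main} is open, and the paper's $C_n$ arguments (Theorem~\ref{thm:main3}) only reach $\extD(C_n)=4$ under the strong hypothesis that the least prime divisor of $n$ is at least $7$, via the Divisibility Condition. Your reduction is also not quite to $\extD(C_m)$: the six blanks in $\S^1$ need not sit at equally spaced points, so even granting Conjecture~\ref{conj:main} you would still owe an argument. In the dense case you explicitly identify the obstacle --- irrational near-symmetries forcing $c_0$ to be constant on dense orbits --- and conclude that ``resolving this [will] require a genuinely new idea.'' That is an accurate diagnosis, not a proof.

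Your structural framing (near-symmetries form a subgroup $\Sigma$; its rotation part is either finite cyclic or dense; in the finite case $\Sigma$ is finite dihedral) is sound and is a reasonable way to organize an attack. But as written, both branches end in acknowledged gaps, so the proposal is a plan of attack on an open problem rather than a proof of the conjecture.
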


\section{Overview of the Proof Technique}
\label{sec:overview}
\label{section:technique}

The proofs of Theorems \ref{thm:main1}, \ref{thm:main3}, and \ref{thm:main2} utilize a common argument, so we prove all three results simultaneously.
In this section we set up terminology and notation that is used throughout the rest of the paper.

Let $X$ be one of $C_n$, $\R/\Z$, or $\R$.
Each choice of $X$ has two categories of automorphisms: reflections and translations.
In the case of cycles and the unit circle, translations are rotations so we refer to rotations as translations for the sake of uniformity.
If a reflection $\tau$ stabilizes an element $x \in X$, then we say that $\tau$ is the \emph{reflection about $x$}.

Observe that every set of at least three elements in $X$ is a fixing set.
For a set $W \subset X$, let $c_0$ be a precoloring of $X \setminus W$.
We refer to the elements of $W$ as \textit{blanks}.
Let $\gamma\in \Aut(X)$.
We say that $c_0$ \textit{permits} $\gamma$ if there is an extension $c^*$ of $c_0$ to $X$ such that $\gamma$ preserves $c^*$; that is, $c^*(x)=c^*(\gamma(x))$ for all $x\in X$.

We will now give a brief outline of the proof.
Assume for the sake of contradiction that $c_0:X\setminus W\to \{\red,\blue\}$ is a red-blue coloring of $X\setminus W$ such that no extension of $c_0$ to $X$ is distinguishing.  First we will prove that there exists a point $w_0 \in W$ such that the reflection about $w_0$ sends the elements of $W\setminus\{w_0\}$ to elements outside of $W$.  Therefore, there is at most one extension of $c_0$ to $W\setminus\{w_0\}$ that permits the reflection about $w_0$.
Next we prove that there is at most one extension of $c_0$ to $W\setminus\{w_0\}$ that permits a translation.
Any extensions to $W\setminus\{w_0\}$ that permits either the reflection about $w_0$ or a translation are \textit{forbidden}; we show in all cases that there are at most two forbidden extensions of $c_0$ to $W\setminus\{w_0\}$.

Fix a non-forbidden extension of $c_0$ to $W\setminus\{w_0\}$; call it $c$.
Since no extension distinguishes $X$ and non-forbidden extensions do not permit translations, $c$ must permit a reflection.
Furthermore, since $c$ does not permit the reflection about $w_0$, no reflection permitted by $c$ fixes $w_0$.
Thus the reflection permitted when $c$ is extended by coloring $w_0$ red is distinct from the reflection permitted when $c$ is extended by coloring $w_0$ blue, since $w_0$ has distinct images under these reflections.
Let $\taur$ be the reflection permitted when $w_0$ is colored red and call it the {\it red reflection} permitted by $c$.
Let $\taub$ be the reflection permitted when $w_0$ is colored blue and call it the {\it blue reflection} permitted by $c$.
Since $\taur$ and $\taub$ are distinct, the composition $\taub\circ \taur$ yields a nontrivial translation $\sigma$ of $X$; we say that $\sigma$ is {\it generated} by $c$.
See Figure~\ref{fig:sigmatraversal} for examples of $\taur, \taub$, and $\sigma$ for the circle and the real line.

\begin{figure}[hp] 
\centering
%\mbox{
\subfigure[Building $\sigma$ from $\tau_{\red}$ and $\tau_{\blue}$ for cycles and circles.]{
\scalebox{1}{
\begin{lpic}[]{"CircleTraversal"(95mm,)}
\lbl[]{83.5,150;\scriptsize $w_0$}
\lbl[r]{60,161;\footnotesize $\tau_{\red}$}
\lbl[l]{106,161;\footnotesize $\tau_{\blue}$}
\lbl[bl]{126,150;\footnotesize $\sigma$}
\end{lpic}
}
}
\vspace{0.5em}
\subfigure[Building $\sigma$ from $\tau_{\red}$ and $\tau_{\blue}$ for the real line.]{
\scalebox{1}{
\begin{lpic}[]{"RealLineTraversal"(95mm,)}

\lbl[]{66,75;\scriptsize $w_0$}
\lbl[tr]{50.5,140;\footnotesize $\tau_{\red}$}
\lbl[tl]{71,3;\footnotesize $\tau_{\blue}$}
\lbl[b]{76,90;\footnotesize $\sigma$}
\end{lpic}}
}
%}
\caption{Building $\rotation{i}$.}
\end{figure}

We consider the orbits of the elements of $X$ under the actions of the group generated by $\sigma$.
In particular, we wish to understand the colors of the elements in these orbits.
Let $x\in X\setminus\{w_0,\taur(w_0)\}$.
Since $c$ permits $\taur$, it follows that $c(x) = c(\taur(x))$, so $\taur$ is color-preserving on $X\setminus\{w_0,\taur(w_0)\}$.
As $c$ is not defined on $w_0$, $\taur$ is not color-preserving on $\{w_0,\taur(w_0)\}$.
Similarly, $\taub$ is color-preserving on $X\setminus\{w_0,\taub(w_0)\}$.
Observe that 
\[
\begin{array}{cccccccc}
c(\sigma(w_0)) &=& c(\taub(\taur(w_0)) &=& c(\taur(w_0)) &=& \red\\
c(\sigma^{-1}(w_0)) &=& c(\taur(\taub(w_0)) &=& c(\taub(w_0)) &=& \blue
\end{array}
\]
hold for our definitions of $\taur$ and $\taub$.

Now let $x\in X\setminus\{w_0, \taur(w_0), \sigma^{-1}(w_0)\}$.
It follows that $c(x) = c(\taur(x))=c(\taub(\taur(x))) = c(\sigma(x))$.
The only elements whose image do not have the same color under $\sigma$ are: $\sigma^{-1}(w_0)$, whose image is a blank; $\taur(w_0)$, whose image is the blue element $\taub(w_0)$; and $w_0$, which is itself a blank.
Therefore $\sigma$ is, in a sense, nearly color-preserving.

All of these observations regarding the behavior of the permitted reflections of non-forbidden extensions are summarized in the following fact.

\clearpage
\begin{fact}\label{prop:sigmatraversal}
	Let $c$ be a non-forbidden extension of $c_0$ to $W\setminus\{w_0\}$, let $\taur$ and $\taub$ be the red and blue reflections permitted by $c$, respectively, and let $\sigma = \taub \circ \taur$.
	The following properties hold:
	\begin{itemize}
		\item[(F0)] $c(\sigma(w_0)) = \red$ and $c(\sigma^{-1}(w_0)) = \blue$; 
		\item[(F1)] $\sigma$ maps a red element to a blue element only at $\taur(w_0) \stackrel{\sigma}{\mapsto} \taub(w_0)$;
		\item[(F2)] $\sigma$ never maps a blue element to a red element, and $\sigma^2$ maps a blue element to a red element only at $\sigma^{-1}(w_0) \stackrel{\sigma}{\mapsto} w_0 \stackrel{\sigma}{\mapsto} \sigma(w_0)$;
		\item[(F3)]  $c(\sigma(x)) = c(x)$ for all elements $x \notin \{ \taur(w_0), \sigma^{-1}(w_0), w_0 \}$.
	\end{itemize}
\end{fact}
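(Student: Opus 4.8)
The plan is to push everything through from the color--preservation properties of the two permitted reflections $\taur$ and $\taub$, then transport them across the composition $\sigma=\taub\circ\taur$. Recall from the discussion preceding the statement that $\taur$ preserves the extension of $c$ in which $w_0$ is colored red, $\taub$ preserves the extension in which $w_0$ is colored blue, and that $w_0,\taur(w_0),\taub(w_0)$ are three pairwise distinct points with $\taur(w_0)$ red and $\taub(w_0)$ blue. The first step is to state this as: $\taur$ is color-preserving with respect to $c$ on $X\setminus\{w_0,\taur(w_0)\}$ (these being the only points where the red extension is undefined or differs from $c$), and symmetrically $\taub$ is color-preserving on $X\setminus\{w_0,\taub(w_0)\}$; the values $c(\taur(w_0))=\red$ and $c(\taub(w_0))=\blue$ also fall out of applying $\taur$, resp.\ $\taub$, to $w_0$ itself.

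From here (F0) is immediate: $\sigma(w_0)=\taub(\taur(w_0))$, and since $\taur(w_0)\notin\{w_0,\taub(w_0)\}$ the reflection $\taub$ does not change its color, so $c(\sigma(w_0))=c(\taur(w_0))=\red$; the computation for $\sigma^{-1}(w_0)=\taur(\taub(w_0))$ is the mirror image and yields $\blue$. For (F3), given $x\notin\{w_0,\taur(w_0),\sigma^{-1}(w_0)\}$ I would check that neither step of $\sigma(x)=\taub(\taur(x))$ hits an exceptional point: $x\notin\{w_0,\taur(w_0)\}$ handles the first application, and $\taur(x)=\taub(w_0)$ would force $x=\taur(\taub(w_0))=\sigma^{-1}(w_0)$, which is excluded, so $c(\sigma(x))=c(\taur(x))=c(x)$.

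Finally, (F1) and (F2) follow by inspecting the at most three points where (F3) allows $\sigma$ to change a color: $w_0$ is a blank, $\taur(w_0)$ is red and is mapped by $\sigma$ to $\taub(w_0)$ (blue), and $\sigma^{-1}(w_0)$ is blue (by (F0)) and is mapped to $w_0$ (a blank). This directly gives that $\taur(w_0)\mapsto\taub(w_0)$ is the only red-to-blue edge of $\sigma$ and that $\sigma$ has no blue-to-red edge. For the remaining clause of (F2) I would observe that $c(\sigma^2(x))\neq c(x)$ forces $x$ or $\sigma(x)$ to be exceptional, and then run the short check: a blue exceptional $x$ must be $\sigma^{-1}(w_0)$, yielding the route $\sigma^{-1}(w_0)\mapsto w_0\mapsto\sigma(w_0)$ with $c(\sigma(w_0))=\red$; and if $x$ is blue and non-exceptional with $\sigma(x)$ exceptional, then $\sigma(x)$ is a blue exceptional point, hence $\sigma(x)=\sigma^{-1}(w_0)$ and $\sigma^2(x)=w_0$ is a blank, not red. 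The argument is essentially bookkeeping; the only delicate point is this last enumeration, together with noting that the three exceptional points are genuinely distinct ($\taur(w_0)$ and $\sigma^{-1}(w_0)$ carry opposite colors, and $\sigma$ is a nontrivial translation) so that no degenerate coincidence can collapse the case analysis.
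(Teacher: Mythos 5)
Your proposal is correct and follows essentially the same route as the paper: establish that $\taur$ and $\taub$ are color-preserving away from $\{w_0,\taur(w_0)\}$ and $\{w_0,\taub(w_0)\}$ respectively, compose to get (F0) and (F3), and then read off (F1) and (F2) by enumerating the three exceptional points. The only addition is your explicit case check for the $\sigma^2$ clause of (F2), which the paper leaves implicit; it is handled correctly.
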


Figure \ref{fig:sigmatraversal} shows the possible colorings of the orbit of $\sigma$ containing $w_0$.
This orbit is either infinite or finite, and may or may not contain $\taur(w_0)$ and $\taub(w_0)$.
In particular,  when $X=\R$, the orbit is always infinite.
When $X = \R/\Z$, the orbit may be infinite or finite.
When $X = C_n$, the orbit is always finite.

\begin{figure}[tp]
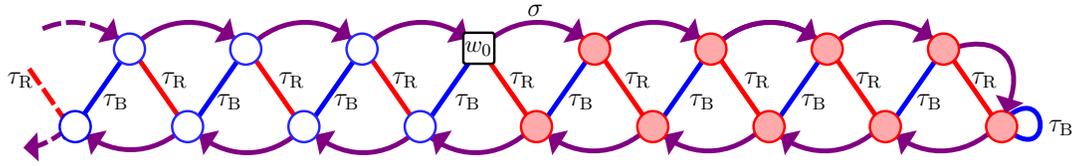
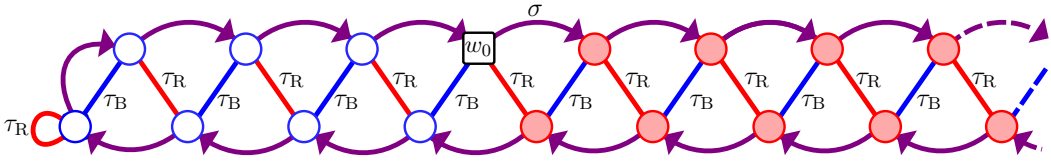
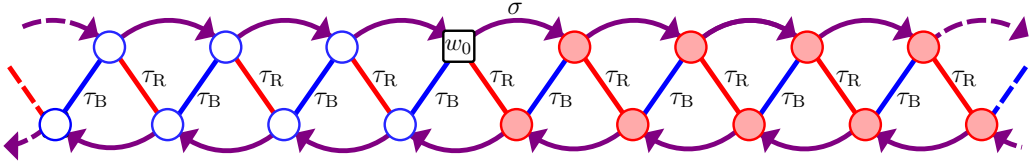
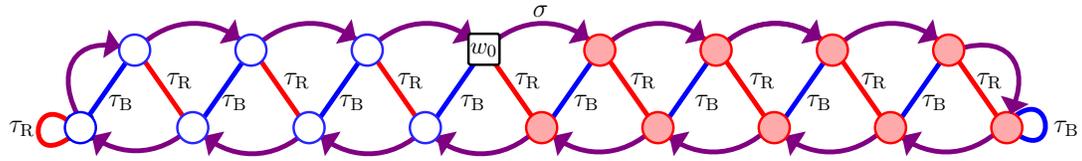

	\centering
	\subfigure[\label{subfig:sigmainfiniteA}\emph{Infinite Blue Case}, $|\sigma| = \infty$ and $\taur(w_0) = \sigma^k(w_0)$ for $k \geq 1$.]{\scalebox{0.825}{
\begin{lpic}[]{"SigmaTraversalInfA"(,25mm)}
\lbl[]{62.5,16.25;\small $w_0$}

\lbl[]{0,12; $\taur$}

\lbl[]{21,12; $\taur$}
\lbl[]{13,9; $\taub$}

\lbl[]{37,12; $\taur$}
\lbl[]{28.5,9; $\taub$}

\lbl[]{52.5,12; $\taur$}
\lbl[]{44.5,9; $\taub$}

\lbl[]{68.5,12; $\taur$}
\lbl[]{61,9; $\taub$}

\lbl[]{84.25,12; $\taur$}
\lbl[]{76.5,9; $\taub$}

\lbl[]{100.5,12; $\taur$}
\lbl[]{93,9; $\taub$}

\lbl[]{115.75,12; $\taur$}
\lbl[]{108.25,9; $\taub$}

\lbl[]{131.5,12; $\taur$}
\lbl[]{124,9; $\taub$}

\lbl[]{142,5.5; $\taub$}

\lbl[]{70.25,21.5; $\sigma$}
\end{lpic}
}}
\vspace{1em}
	\subfigure[\label{subfig:sigmainfiniteB}\emph{Infinite Red Case}, $|\sigma| = \infty$ and $\taur(w_0) = \sigma^{-k}(w_0)$ for $k \geq 1$.]{\scalebox{0.825}{
\begin{lpic}[]{"SigmaTraversalInfB"(,25mm)}
\lbl[]{62.5,16.25;\small $w_0$}

\lbl[]{-0.5,5.5; $\taur$}

\lbl[]{21,12; $\taur$}
\lbl[]{13,9; $\taub$}

\lbl[]{37,12; $\taur$}
\lbl[]{28.5,9; $\taub$}

\lbl[]{52.5,12; $\taur$}
\lbl[]{44.5,9; $\taub$}

\lbl[]{68.5,12; $\taur$}
\lbl[]{61,9; $\taub$}

\lbl[]{84.25,12; $\taur$}
\lbl[]{76.5,9; $\taub$}

\lbl[]{100.5,12; $\taur$}
\lbl[]{93,9; $\taub$}

\lbl[]{115.75,12; $\taur$}
\lbl[]{108.25,9; $\taub$}

\lbl[]{131.5,12; $\taur$}
\lbl[]{124,9; $\taub$}

\lbl[]{70.25,21.5; $\sigma$}
\end{lpic}
}}
\vspace{1em}
	\subfigure[\label{subfig:sigmainfiniteC}\emph{Doubly Infinite Case}, $|\sigma| = \infty$ and $\taur(w_0) \neq \sigma^{k}(w_0)$ for $k \in \Z$.]{\scalebox{0.825}{
\begin{lpic}[]{"SigmaTraversalInfC"(,25mm)}
\lbl[]{62.5,16.25;\small $w_0$}

\lbl[]{21,12; $\taur$}
\lbl[]{13,9; $\taub$}

\lbl[]{37,12; $\taur$}
\lbl[]{28.5,9; $\taub$}

\lbl[]{52.5,12; $\taur$}
\lbl[]{44.5,9; $\taub$}

\lbl[]{68.5,12; $\taur$}
\lbl[]{61,9; $\taub$}

\lbl[]{84.25,12; $\taur$}
\lbl[]{76.5,9; $\taub$}

\lbl[]{100.5,12; $\taur$}
\lbl[]{93,9; $\taub$}

\lbl[]{115.75,12; $\taur$}
\lbl[]{108.25,9; $\taub$}

\lbl[]{131.5,12; $\taur$}
\lbl[]{124,9; $\taub$}

\lbl[]{70.25,21.5; $\sigma$}
\end{lpic}
}}
\vspace{1em}
	\subfigure[\label{subfig:sigmafinite}\emph{Finite Case}, $|\sigma| < \infty$.]{
\scalebox{0.825}{
\begin{lpic}[]{"SigmaTraversal"(,25mm)}
\lbl[]{62.5,16.25;\small $w_0$}

\lbl[]{-0.5,5.5; $\taur$}

\lbl[]{21,12; $\taur$}
\lbl[]{13,9; $\taub$}

\lbl[]{37,12; $\taur$}
\lbl[]{28.5,9; $\taub$}

\lbl[]{52.5,12; $\taur$}
\lbl[]{44.5,9; $\taub$}

\lbl[]{68.5,12; $\taur$}
\lbl[]{61,9; $\taub$}

\lbl[]{84.25,12; $\taur$}
\lbl[]{76.5,9; $\taub$}

\lbl[]{100.5,12; $\taur$}
\lbl[]{93,9; $\taub$}

\lbl[]{115.75,12; $\taur$}
\lbl[]{108.25,9; $\taub$}

\lbl[]{131.5,12; $\taur$}
\lbl[]{124,9; $\taub$}

\lbl[]{142,5.5; $\taub$}

\lbl[]{70.25,21.5; $\sigma$}
\end{lpic}
}}
	\caption{\label{fig:sigmatraversal}The different traversals of $\sigma$.}
\end{figure}

Suppose that $c_1$ and $c_2$ are two non-forbidden extensions and assume that $c_1$ and $c_2$ do not extend to distinguishing colorings.
For each $i\in\{1,2\}$, let $\redreflect i$ and $\bluereflect i$ be the red and blue reflections permitted by $c_i$ and let $\rotation i$ be the rotation generated by $c_i$.
Both $\sigma_1$ and $\sigma_2$ satisfy Fact~\ref{prop:sigmatraversal} for their respective colorings.
We consider how $\rotation1$ and $\rotation2$ interact.
Some possible relationships between $\rotation1$ and $\rotation2$ are demonstrated in Figure~\ref{fig:lattices}.

Suppose that $\rotation1$ has infinite order.
If $\rotation{1}^{i} \neq \rotation{2}^j$ for all integers $i$ and $j$ (not both $0$), then $\sigma_2$ has infinite order and each $\rotation1$-orbit intersects each $\rotation2$-orbit in at most one element.
Thus the orbit of $w_0$ under the action of the group generated by $\sigma_1$ and $\sigma_2$ has a lattice structure, as in Figure~\ref{fig:lattice}.
Otherwise, there are integers $i,j$, not both $0$, such that $\rotation{1}^i = \rotation{2}^j$.
If $i=0$, then $\sigma_2$ has finite order, and the orbit of $w_0$ under $\sigma_1$ and $\sigma_2$ has a cylindrical structure.
If $|i|>0$ and $|j| \geq 2$, then $\sigma_2$ has infinite order and these actions have a shifted cylindrical structure as in Figure~\ref{fig:cylinder}.
When $\rotation{1}^i = \rotation2$ for some $i$, then we say that these actions form a \emph{linear lattice} as in Figure~\ref{fig:linearlattice}.
%!TEX root = draft.tex
\begin{figure}[tp]
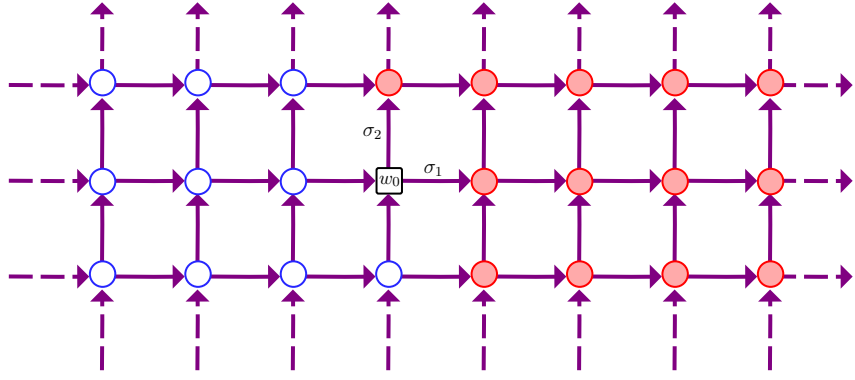
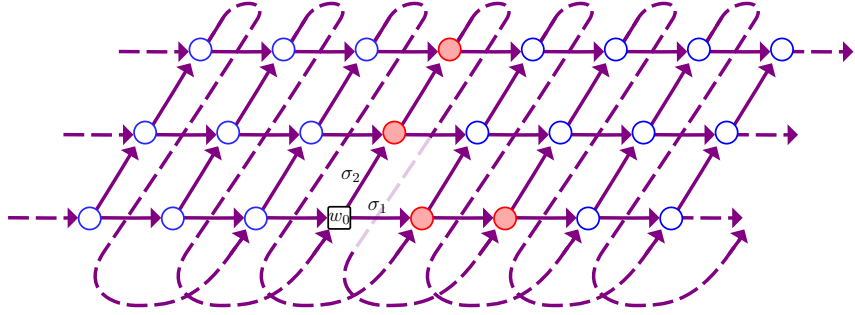
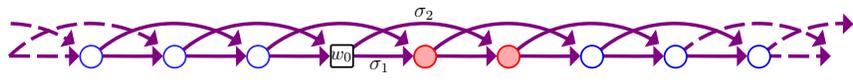
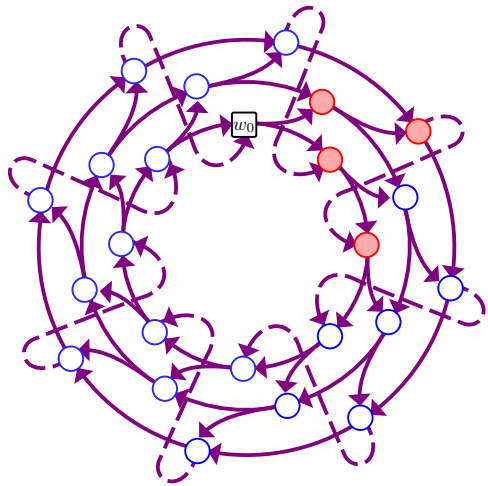
 
\centering
\subfigure[\label{fig:lattice}A lattice when $|\rotation1| = \infty$ and $\rotation{1}^i\neq\rotation{2}^j$ for all $i, j \neq 0$.]{
\scalebox{0.65}{
\begin{lpic}[]{"SigmaTraversal-Lattice"(175mm,)}
\lbl[]{64.25,32.5;\small $w_0$}
\lbl[b]{71.5,33.25; $\sigma_1$}
\lbl[r]{63,40.5; $\sigma_2$}
\end{lpic}
}
}
\subfigure[\label{fig:cylinder}A cylindrical lattice when $|\rotation1| = \infty$ and $\rotation{1}^2=\rotation{2}^3$.]{
\scalebox{0.65}{
\begin{lpic}[]{"SigmaTraversal-Cylinder"(175mm,)}
\lbl[]{64.25,18;\small $w_0$}
\lbl[b]{71.5,19; $\sigma_1$}
\lbl[br]{68.25,25; $\sigma_2$}
\end{lpic}
}
}
\subfigure[\label{fig:linearlattice}A linear lattice when $|\rotation1| = \infty$ and $\rotation{2}=\rotation{1}^2$.]{
\scalebox{0.65}{
\begin{lpic}[]{"SigmaTraversal-LinePair"(175mm,)}
\lbl[]{64.25,3.25;\small $w_0$}
\lbl[t]{71.5,2.5; $\sigma_1$}
\lbl[b]{80,10.25; $\sigma_2$}
\end{lpic}
}
}
\subfigure[\label{fig:torus}A toroidal lattice when $|\sigma_1| < \infty$.]{
\scalebox{0.65}{
\begin{lpic}[]{"SigmaTraversal-Torus"(100mm,)}
\lbl[]{42,63;\small $w_0$}
\end{lpic}
}
}
\caption{\label{fig:lattices}The translations $\sigma_1$ and $\sigma_2$ form lattices of different kinds.}
\end{figure}

If both $\rotation{1}$ and $\rotation{2}$ have finite order, then these actions define a torus as in Figure~\ref{fig:torus}.
In particular when $\rotation{1}^i = \rotation{2}$, we say that the actions form a {circular lattice}.
Observe that when $X = C_n$ where $n$ is prime, $\rotation1$ and $\rotation2$ automatically generate a circular lattice.

To obtain a contradiction, we choose $c_1$ and $c_2$ subject to a specified set of criteria.
We then prove that $\sigma_1$ and $\sigma_2$ cannot coexist with their given color-preserving properties.
This contradicts the assumption that every extension of $c_1$ or $c_2$ permits a color-preserving reflection.
%Our ability to select two extensions leading to a contradiction is limited by the number of blanks and their arrangement within $X$, leading to some subtle differences in the arguments.

\section{The Arrangement of Blanks}
\label{sec:uncolored}

In this section we will assume that the set $W$ satisfies some additional geometric conditions.
These conditions guarantee the existence of an element $w_0\in W$ such that the reflection about $w_0$ maps elements in $W\setminus\{w_0\}$ to elements not in $W$.
%In addition, we will exploit these conditions in the proofs of the main theorems.
Our main theorems follow by demonstrating the existence of a subset of $W$ satisfying these special properties.

Let $X$ be the real line, the unit circle, or a cycle.
Every cycle has a faithful embedding into the circle, so we will use the image of such an embedding.
Thus every translation can be expressed as $x \mapsto x + \alpha$ for some real number $\alpha \in (0,1)$.
Given a set $W\subset X$ and a real number $\alpha$, define $W+\alpha=\{w+\alpha:w\in W\}$.

\begin{hypothesis}[Divisibility Condition]
Assume that $W$ satisfies $(W + \frac{i}{k}) \cap W = \varnothing$ for all $k \in \{2,3,4,5\}$ and all $i\in\{1\ldots,k-1\}$.
\end{hypothesis}

We will prove the following theorem.

\begin{theorem}\label{thm:divisibility}
If $W \subset X$ with $|W|  = 4$ satisfies the Divisibility Condition, then every precoloring $c: X \setminus W\to \{\red,\blue\}$ extends to a distinguishing 2-coloring of $X$.
\end{theorem}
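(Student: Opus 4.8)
The plan is to argue by contradiction following the framework set up in Section~\ref{sec:overview}. Suppose $W \subset X$ with $|W| = 4$ satisfies the Divisibility Condition, and suppose $c_0 : X \setminus W \to \{\red,\blue\}$ is a precoloring admitting no distinguishing extension. First I would establish the existence of the special blank $w_0 \in W$: since $|W| = 4$ is small, I would show directly that some $w_0 \in W$ has the property that the reflection about $w_0$ sends $W \setminus \{w_0\}$ entirely outside $W$. This should follow from the Divisibility Condition together with a short combinatorial/averaging argument — if no such $w_0$ existed, then for each of the four choices of center the reflection would trap at least one other blank, forcing many coincidences $w_i + w_j = w_k + w_\ell$ among the (at most) four points, and one checks these force a forbidden small-denominator relation. (Alternatively, this is presumably exactly the content that Section~\ref{sec:uncolored} is building toward with its ``additional geometric conditions,'' so I would lean on whatever reduction the paper has just made; but the self-contained version is the averaging argument above.)

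With $w_0$ fixed, I would invoke the machinery verbatim: there is at most one extension of $c_0$ to $W \setminus \{w_0\}$ permitting the reflection about $w_0$ and at most one permitting a translation, so among the $2^3 = 8$ extensions of $c_0$ to the three remaining blanks, at least six are non-forbidden. For each non-forbidden extension $c$ we get, by Fact~\ref{prop:sigmatraversal}, a red reflection $\taur$, a blue reflection $\taub$, and a generated translation $\sigma = \taub \circ \taur$ with the near-color-preserving properties (F0)--(F3). The heart of the argument is then to pick two non-forbidden extensions $c_1, c_2$ and derive a contradiction from the coexistence of $\sigma_1$ and $\sigma_2$ with their (F0)--(F3) properties, using the lattice analysis of Figure~\ref{fig:lattices}. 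Because $|W \setminus \{w_0\}| = 3$ is so small, the orbit structure is highly constrained: the elements $\taur^{(i)}(w_0)$ and $\taub^{(i)}(w_0)$ must lie in $W$ or be forced by (F0) to be colored in a way that, propagated along $\sigma_i$-orbits via (F3), collides with the fixed precoloring $c_0$ away from the three blanks.

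The main obstacle — and where the real work lies — is the case analysis on how $\sigma_1$ and $\sigma_2$ sit relative to each other (lattice, cylinder, linear lattice, toroidal/circular lattice) and on the $X$-specific geometry ($\R$, $\R/\Z$, $C_n$). I expect the Divisibility Condition to be exactly what kills the degenerate configurations: it prevents $\sigma_1$ or $\sigma_2$ from being a ``small'' translation by $i/k$ with $k \le 5$, and it prevents the blank set $W$ from being symmetric under such a translation, so any $\sigma_i$ that arises must move at least one blank to a genuinely precolored point, letting (F3) force a monochromatic $\sigma_i$-orbit that contradicts (F0) (which demands $c(\sigma(w_0)) = \red \ne \blue = c(\sigma^{-1}(w_0))$). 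Concretely: if $\sigma_i$ had finite order $m$, then iterating (F3) around the orbit of $w_0$ forces all of $\sigma_i(w_0), \sigma_i^2(w_0), \dots$ to share a color except at the two documented exceptions, and counting the exceptions against $m$ (with $m \ge 6$ guaranteed by the Divisibility Condition, since $m \in \{2,3,4,5\}$ is excluded) yields a contradiction; if $\sigma_i$ has infinite order, the same propagation runs to infinity along $\R$ or wraps the circle, and comparing $\sigma_1$ with $\sigma_2$ on the overlap of their orbits pins down $w_0$'s color inconsistently. I would organize the write-up as: (1) produce $w_0$; (2) reduce to $\ge 6$ non-forbidden extensions and extract the $\sigma_i$'s; (3) dispose of the finite-order case via orbit-length counting against the Divisibility bound; (4) dispose of the infinite-order case via the lattice pictures, in each sub-configuration using (F3) to propagate a color contradiction. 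Step (4), with its sub-cases, is the technical crux.
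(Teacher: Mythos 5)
Your high-level skeleton (find $w_0$, bound the forbidden extensions, extract permitted reflections and generated translations, contradict Fact~\ref{prop:sigmatraversal}) matches the paper, but two of your load-bearing steps fail. First, you assert that the Divisibility Condition excludes $\sigma_i$ having order in $\{2,3,4,5\}$ and then dispose of the finite case by counting exceptions around an orbit of length $m\ge 6$. The Divisibility Condition constrains only where the \emph{blanks} sit ($(W+\tfrac{i}{k})\cap W=\varnothing$); it says nothing about which translations can be generated by a coloring, and indeed the paper must separately kill the order-$2$ translation (Lemma~\ref{lem:c_2notforbA}) and repeatedly confronts generated translations of order exactly $3$ (Lemma~\ref{cl:gammaorder}). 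Worse, even granting $m\ge 6$, a single finite-order $\sigma$ satisfying (F0)--(F3) is perfectly self-consistent: the orbit of $w_0$ can be a red arc followed by a blue arc with the two ``exceptional'' transitions at $w_0$ and at $\taur(w_0)\mapsto\taub(w_0)$ (this is exactly Figure~\ref{subfig:sigmafinite}). No single-coloring orbit count yields a contradiction; the finite case is the hard core of the proof, not the easy one, and it is resolved only by playing two colorings against each other (Lemmas~\ref{lma:BAM} and~\ref{thm:finishit}).

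Second, ``pick two non-forbidden extensions $c_1,c_2$'' is not enough: two arbitrary non-forbidden extensions can generate translations that coexist without contradiction. The paper's argument depends on choosing $c_1$ extremally --- maximizing $|\cO_0|$ and then $\min\{|\cO_0^{\red}|,|\cO_0^{\blue}|\}$ --- and then manufacturing $c_2$ from $c_1$ by flipping the colors of a precisely located set $W'$ (either $\{\taur^{(1)}(w_0),\taub^{(1)}(w_0)\}$ when both are blanks, or a single blank chosen away from $\sigma_1^{\pm 1}(w_0)$, $\taur^{(1)}(w_0)$, $\taub^{(1)}(w_0)$, and $\sigma_1^{\pm1}(w_0)+\tfrac12$; that such a choice exists is itself a lemma). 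Every subsequent step --- showing $c_2$ is not forbidden, forcing $|\sigma_2|<\infty$, forcing $|\sigma_1|=|\sigma_2|$, and the final contradiction --- leans on knowing exactly where $c_1$ and $c_2$ differ and that they agree at $\sigma_1^{\pm1}(w_0)$. Without specifying this selection mechanism, your steps (3) and (4) cannot be carried out, so the proposal has a genuine gap at its center.
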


Note that not every choice of $W$ will satisfy the divisibility condition.
However, we will show later in Lemma~\ref{lma:16} that  when $X$ is the unit circle or a cycle, if $W \subset X$ and $|W| \geq 16$, then there exists a 4-element subset of $W$  that satisfies the condition.
Therefore, Theorem~\ref{thm:divisibility} and Lemma~\ref{lma:16} imply Theorem~\ref{thm:main2}.

Observe that for any distinguishing coloring $c$ of $\R$ and any nonzero real number $\alpha$, the coloring $c'(x) = c(\alpha x)$ is also distinguishing.
Further note that if $W \subset \R$ with $|W| < \infty$ does not satisfy the divisibility condition, then  there exists a nonzero $\alpha$ such that the set $W' = \{ \alpha w : w \in W\}$ satisfies the divisibility condition.
Thus, if $c : \R \setminus W \to \{\red,\blue\}$ is a precoloring, then any distinguishing extension of the precoloring $c' : \R\setminus W' \to \{\red,\blue\}$ defined by $c'(x) = c(\alpha x)$ induces a distinguishing extension of $c$.
Therefore, Theorem~\ref{thm:divisibility} also implies Theorem~\ref{thm:main1}.

\begin{observation}\label{obs:2elts}
Let $W$ satisfy the Divisibility Condition and let $|W|\le 4$.
If $\sigma$ is a translation, then every $\sigma$-orbit contains at least one element not in $W$.
If $\sigma$ has order at least $3$, then every $\sigma$-orbit contains at least two elements not in $W$.
\end{observation}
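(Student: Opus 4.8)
\textbf{Proof proposal for Observation~\ref{obs:2elts}.}
The plan is to translate the Divisibility Condition into a statement about how $\sigma$-orbits can meet $W$, using the fact that $|W| \le 4$. First I would record the key structural fact underlying the Divisibility Condition: if $x$ and $y$ lie in the same $\sigma$-orbit and are both in $W$, then $y = \sigma^j(x) = x + j\alpha$ for some nonzero integer $j$ (working modulo $1$ on the circle/cycle, or literally on $\R$), so $W \cap (W + j\alpha) \ne \varnothing$. The hypothesis forbids this whenever $j\alpha \equiv i/k \pmod 1$ for $k \in \{2,3,4,5\}$ and $1 \le i \le k-1$; more usefully, I would argue that any two distinct elements of a $\sigma$-orbit that both lie in $W$ force $\sigma$ to have ``large'' order (at least $6$) and that such coincidences are tightly constrained by $|W| \le 4$.

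The main argument splits on $|\sigma|$. For the first assertion, suppose some $\sigma$-orbit $O$ were contained entirely in $W$. Since $|W| \le 4$, $O$ has at most $4$ elements, so $\sigma$ restricted to $O$ is a cyclic permutation of order $m \le 4$, i.e.\ $\sigma^m$ fixes every point of $O$; on $\R$ this is impossible since a nontrivial translation has no finite orbit, and on the circle/cycle it forces $\sigma$ to be translation by $j/m$ for some $j$ coprime to $m$ with $m \in \{2,3,4\}$. Then taking two adjacent points $x, \sigma(x) \in O \subseteq W$ gives $W \cap (W + j/m) \ne \varnothing$ with $m \in \{2,3,4\}$, directly contradicting the Divisibility Condition (with $k = m$, $i = j \bmod m$). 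Hence every $\sigma$-orbit contains an element outside $W$.

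For the second assertion, assume $|\sigma| \ge 3$ and suppose some $\sigma$-orbit $O$ satisfies $|O \cap W| \ge |O| - 1$, i.e.\ at most one element of $O$ is outside $W$. If $O$ is finite, write $|O| = N$; then $|O \cap W| \ge N - 1$, and since $|W| \le 4$ we get $N \le 5$. List $O$ cyclically as $x, \sigma(x), \dots, \sigma^{N-1}(x)$ with $\sigma$ acting by $+j/N$; at most one of these is a blank, so among the $N$ consecutive pairs $(\sigma^t(x), \sigma^{t+1}(x))$ at least $N - 2 \ge 1$ have both entries in $W$, again yielding $W \cap (W + j/N) \ne \varnothing$ with $N \in \{3,4,5\}$ — contradiction (with $k = N$; note $j/N$ reduces to $i'/k'$ for some $k' \mid N$, $k' \ge 3$, still in the forbidden range, or one handles $\gcd(j,N)>1$ by passing to $\sigma^{\gcd}$). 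If $O$ is infinite (only possible when $X = \R$ or $X = \R/\Z$ with $\alpha$ irrational), then $O \cap W$ is finite of size $\le 4$ while $O$ is infinite, so certainly $O$ has at least two — in fact infinitely many — elements outside $W$, and there is nothing to prove.

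I expect the main obstacle to be the bookkeeping in the finite case when $\gcd(j, N) > 1$: then $\sigma$ is translation by $j/N$ but the ``step'' $j/N$ need not itself be of the form $i/k$ in lowest terms with $k \le 5$. The clean fix is to note that if $d = \gcd(j,N) > 1$ then $\sigma$ itself already decomposes the orbit, and one should instead apply the Divisibility Condition to the reduced fraction: $j/N = i/k$ in lowest terms with $k = N/d \in \{2,3,4,5\}$ (for the first part, $k \le 4$; for the second, one checks $k \ge 3$ when $|\sigma| \ge 3$, since $|\sigma| \ge 3$ forces $N/d \ge 3$), and two orbit-elements in $W$ differing by $\sigma$ differ by $i/k$, giving the required nonempty intersection. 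Everything else is a direct pigeonhole count against $|W| \le 4$.
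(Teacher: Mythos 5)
Your proof is correct. The paper states this as an Observation with no proof supplied, and your argument --- an orbit meeting $W$ in all but at most one element forces $|O|=|\sigma|\le 5$, whence two consecutive orbit elements $x$ and $x+j/|O|$ both lie in $W$ and violate the Divisibility Condition (with $\gcd(j,|O|)=1$ automatic since $|O|$ equals the order of $\sigma$, so your worry about reducing the fraction never actually arises) --- is exactly the routine pigeonhole justification the authors intend the reader to fill in.
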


\begin{lemma}\label{lma:divisibilityreflection}
If $W \subset X$ has $|W| = 4$ and satisfies the Divisibility Condition, then there exists an element $w_0 \in W$ such that the reflection $\tau_{w_0}$ about $w_0$ maps all elements in $W\setminus\{w_0\}$ to elements not in $W$.
\end{lemma}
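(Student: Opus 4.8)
The plan is to argue by contradiction: suppose that for every $w \in W$ the reflection $\tau_w$ about $w$ sends some element of $W \setminus \{w\}$ back into $W$. Writing $W = \{a, b, c, d\}$ and recalling that the reflection about $w$ acts by $x \mapsto 2w - x$ (on $\R$, or modulo $1$ on the circle/cycle), the statement ``$\tau_w(w') \in W$ for some $w' \ne w$'' means there exist two other elements $w', w'' \in W$ (possibly equal) with $w' + w'' = 2w$, i.e. $w$ is the midpoint of $w'$ and $w''$ (midpoint taken in $\R$, or as one of the two bisecting points on the circle). So the contradiction hypothesis says: \emph{every} point of $W$ is a midpoint of two (not necessarily distinct) points of $W$. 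I would first dispose of the degenerate subcase where $w'' = w'$: then $2w = 2w'$, which on $\R$ forces $w = w'$, a contradiction, and on the circle/cycle forces $w - w' \in \{0, \tfrac12\}$, so $w' = w + \tfrac12$; but then $(W + \tfrac12) \cap W \ne \varnothing$, violating the Divisibility Condition with $k=2$. Hence for each $w \in W$ the two witnesses $w', w''$ are distinct elements of $W \setminus \{w\}$, and $w$ is their genuine midpoint.

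Next I would extract the combinatorial consequence. For each of the four elements $w$ of $W$ we get an equation $w' + w'' = 2w$ with $\{w', w''\}$ a $2$-subset of the remaining three elements. I would like to show this system of four equations (one per element, each using a pair from a $3$-set) is overdetermined. The cleanest route on $\R$: order the elements $a < b < c < d$. The largest element $d$ cannot be a midpoint of two strictly smaller elements, so no equation can be centered at $d$ — immediate contradiction. On the circle/cycle the ordering argument fails because ``midpoint'' is ambiguous, so there I would instead use a counting/linear-algebra argument: summing the four equations gives $\sum_{w \in W} (\text{pair sum}) = 2\sum_{w\in W} w$; on the left each element appears some number of times, and one tracks multiplicities to force two of the $a,b,c,d$ to be equal or to differ by a rational with small denominator. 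More concretely, each midpoint equation $w' + w'' \equiv 2w \pmod 1$ can be rewritten as $(w - w') \equiv (w'' - w) \pmod 1$, exhibiting $w'$ and $w''$ as symmetric about $w$; chaining a few of these equations produces a relation of the form $2(x - y) \equiv 0$ or $3(x-y)\equiv 0$ etc. for two distinct elements $x,y \in W$, hence $x - y \in \{\tfrac{i}{k} : k \in \{2,3\}\}$ (and similarly $k\in\{4,5\}$ if longer chains are needed), which contradicts the Divisibility Condition. The pigeonhole point is that with only four elements and four equations each drawing its pair from a fixed three-element set, some element is forced to serve as an endpoint of two different midpoint relations, and writing those two relations down and eliminating the common endpoint yields the forbidden small-denominator difference.

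The main obstacle I anticipate is the circle/cycle bookkeeping: on $\R$ the monotonicity argument kills the lemma in one line, but on $\R/\Z$ each reflection fixes \emph{two} antipodal points and ``the midpoint of $w'$ and $w''$'' is one of two candidates $\tfrac{w'+w''}{2}$ and $\tfrac{w'+w''}{2} + \tfrac12$, so the case analysis on which pairs realize which element as a (bisecting) fixed point has more branches. The way I would keep this manageable is to phrase everything as: $\tau_w(w') \in W$ iff $w + (\text{that element of }W) \equiv 2w \pmod 1$ for some element, i.e. iff $2w \in W + W := \{x + y : x,y \in W\} \pmod 1$, and then observe that $|W| = 4$ makes $W + W$ small enough (at most $\binom42 + 4 = 10$ values, minus the diagonal contributions already excluded) that the requirement ``$2w \in W+W$ for all four $w$'' forces a coincidence among the $\tfrac{i}{k}$-translates ruled out by the hypothesis. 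I would organize the final write-up as a short case split on how many of the four ``$2w \in W+W$'' memberships are realized by a diagonal term $w' = w''$ versus an off-diagonal pair, using the $k=2$ part of the Divisibility Condition to kill the diagonal case and a two-equation elimination to kill the off-diagonal case in each branch.
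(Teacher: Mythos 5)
Your reduction to ``every point of $W$ is the genuine midpoint of two distinct other points of $W$'' is correct, your disposal of the degenerate case $w'=w''$ via the $k=2$ clause of the Divisibility Condition is correct, and your one-line argument for $X=\R$ (the extreme element of $W$ cannot be a midpoint) is exactly the paper's argument (the paper takes $w_0=\min W$). The issue is the circle/cycle case, which is where all the work lies, and there your proposed closing mechanism has a concrete gap. First, the ``pigeonhole then eliminate the common endpoint'' step does not uniformly produce a forbidden two-element difference: from $x_1+x_j\equiv 2x_i$ and $x_1+x_l\equiv 2x_k \pmod 1$ you get $x_j-x_l\equiv 2(x_i-x_k)$, which for generic index patterns (e.g.\ $i=2$, $j=3$, $k=4$, $l=2$, giving $x_3\equiv 3x_2-2x_4$) is a three- or four-term affine relation, not a relation of the form $m(x-y)\equiv 0$. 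Turning the system into genuine contradictions requires a structured case analysis --- the paper organizes it by the cyclic order of the four points and by which of the three possible pairs witnesses each center --- and you have only asserted, not shown, that every branch terminates in a forbidden difference.

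Second, and more seriously, your claimed contradiction ``$x-y\in\{\tfrac{i}{k}: k\in\{2,3,4,5\}\}$'' is not what actually comes out of all branches. In the paper's analysis the branch $\tau_{x_1}\colon x_2\mapsto x_4$ forces $1\in\{4\alpha,5\alpha,6\alpha\}$ where $\alpha=x_2$; the values $\alpha=\tfrac14$ and $\alpha=\tfrac15$ are killed by the Divisibility Condition, but $\alpha=\tfrac16$ is \emph{not} --- the Condition only excludes differences $\tfrac{i}{k}$ for $k\in\{2,3,4,5\}$, and $\tfrac16$ is not of that form. An additional observation is needed: when $6\alpha=1$ the equations force $x_3=3\alpha=\tfrac12$, and it is the resulting difference $x_3-x_1=\tfrac12$ that violates the Condition. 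Your plan, as stated, would declare victory one step too early in exactly this branch, so you need to anticipate that denominators outside $\{2,3,4,5\}$ arise from the chained equations and be converted into forbidden differences by a further substitution.
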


\begin{proof}
If $X=\R$, then set $w_0=\min W$.
If $X=C_n$, we may assume that $W\subset \R/\Z$ by using the faithful embedding of $C_n$ into $\R/\Z$, and so we assume that $X=\R/\Z$.
Let $W=\{x_1,x_2,x_3,x_4\}$ labeled in clockwise order, and without loss of generality assume that $x_1=0$.
Assume for the purposes of obtaining a contradiction that $\tau_{x_i}$ maps an element of $W\setminus\{x_i\}$ to an element of $W\setminus\{x_i\}$ for all $x_i\in W$.

If $\tau_{x_1}:x_2\to x_4$, then $x_2=\alpha$ and $x_4=1-\alpha$ for some $\alpha\in (0,1/2)$.
Since $\alpha\neq 1/3$, we conclude that $\tau_{x_2}(x_1)\neq x_4$.
Therefore $\tau_{x_2}(x_3)\in \{x_1,x_4\}$.
By symmetry $\tau_{x_4}(x_3)\in \{x_1,x_2\}$.
Therefore $x_3\in\{2\alpha,3\alpha\}\cap\{1-2\alpha,1-3\alpha\}$.
Since $1=x_3+(1-x_3)$, we conclude that $1\in\{4\alpha,5\alpha,6\alpha\}$.
By the Divisibility Condition, we conclude that $\alpha=1/6$.
However, in this case, $x_3=1/2$, contradicting the Divisibility Condition.
Therefore we may assume that $\tau_{x_i}(x_{i+1})\neq x_{i-1}$ for all $i\in [4]$ with indices taken modulo $4$.

Now, without loss of generality, assume that $\tau_{x_1}:x_2\to x_3$.
Thus $x_2=\alpha$ and $x_3=1-\alpha$ for some $\alpha\in (0,1/2)$.
By the Divisibility Condition, $\alpha\neq 1/3$, so $\tau_{x_3}(x_2)\neq x_1$.
Since $x_3\in (1/2,1)$ and $x_4\in (x_3,1)$ it follows that $\tau_{x_3}(x_4)\neq x_1$.
Therefore $\tau_{x_3}(x_2)=x_4$, and by the previous paragraph, we reach a contradiction.
\end{proof}

Let $c : X \setminus W \to \{\red, \blue\}$ be a precoloring of $X \setminus W$ and assume that $c$ does not extend to a distinguishing coloring of $X$.
Let $c_1$ be an extension of $c$ to $X\setminus\{w_0\}$.

\begin{lemma}\label{lma:nonforbidden}
Let $W \subset X$ satisfy the Divisibility Condition with $|W|=4$.
Let $w_0$ be an element in $W$ such that $\tau_{w_0}$ maps elements in $W\setminus\{w_0\}$ to elements not in $W$.
There exist at most two forbidden extensions of $c$ to $X\setminus\{w_0\}$.
\end{lemma}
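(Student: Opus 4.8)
The plan is to recall the definition of \emph{forbidden}: an extension $c_1$ of $c$ to $W\setminus\{w_0\}$ is forbidden if it permits either the reflection $\tau_{w_0}$ about $w_0$ or some translation. So it suffices to show there is at most one forbidden extension of each of these two types. First I would handle the reflection $\tau_{w_0}$. By hypothesis, $\tau_{w_0}$ maps every element of $W\setminus\{w_0\}$ to an element \emph{outside} $W$; write $W\setminus\{w_0\}=\{x_1,x_2,x_3\}$. Then each $\tau_{w_0}(x_j)$ is already colored by $c$. For an extension $c_1$ to permit $\tau_{w_0}$, there must be a further extension to all of $X$ (i.e.\ also coloring $w_0$) preserved by $\tau_{w_0}$; in particular, restricting to $X\setminus\{w_0\}$, we need $c_1(x_j)=c_1(\tau_{w_0}(x_j))=c(\tau_{w_0}(x_j))$ for each $j$. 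This pins down $c_1(x_j)$ for all three $j$, so there is at most one extension of $c$ to $W\setminus\{w_0\}$ that could permit $\tau_{w_0}$. (One should also note $w_0$ can then be colored arbitrarily since it is the unique fixed point of $\tau_{w_0}$; either way, at most one such $c_1$.)

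Next I would bound the number of extensions permitting a translation. Here the key tool is Observation~\ref{obs:2elts}: since $W$ satisfies the Divisibility Condition and $|W|=4$, every translation $\sigma$ has the property that every $\sigma$-orbit contains at least one element of $X\setminus W$. The natural approach: suppose $c_1$ is an extension of $c$ to $W\setminus\{w_0\}$ that permits some translation $\sigma$, witnessed by a full extension $c^*$ to $X$ preserved by $\sigma$. Then $c^*$ is constant on each $\sigma$-orbit, and since every orbit meets $X\setminus W$, the value of $c^*$ on each orbit is forced by $c$. In particular $c^*$ is \emph{completely} determined by $c$ on all of $X$ — there is at most one coloring of $X$ that is both an extension of $c$ and invariant under a translation. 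Hence the restriction to $W\setminus\{w_0\}$, namely $c_1$, is also uniquely determined: there is at most one extension of $c$ to $W\setminus\{w_0\}$ permitting a translation.

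Combining the two cases, there are at most two forbidden extensions of $c$ to $X\setminus\{w_0\}$. The one subtlety I would want to nail down carefully is the last claim in the translation case: different translations $\sigma,\sigma'$ permitted by different extensions could in principle force different colorings, so I must argue that \emph{whichever} translation is permitted, the resulting coloring of $X$ agrees with $c$ wherever $c$ is defined and is determined on $W$ by the orbit structure — but since each such coloring is an extension of the \emph{fixed} precoloring $c$ and is constant on orbits each of which hits a $c$-colored point, any two such colorings of $X$ must agree on every point whose orbit (under its respective translation) meets $X\setminus W$, which is every point; more directly, each candidate coloring of $X\setminus\{w_0\}$ assigns to each $w\in W\setminus\{w_0\}$ the common $c$-value on the $X\setminus W$ points of its orbit, and there is exactly one consistent such assignment if one exists at all. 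This is the main (mild) obstacle; the reflection case is essentially immediate.
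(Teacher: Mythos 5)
Your reflection case is fine and matches the paper: since $\tau_{w_0}$ sends each $x\in W\setminus\{w_0\}$ to an already-colored point, the condition $c_1(x)=c(\tau_{w_0}(x))$ pins down the unique candidate extension permitting $\tau_{w_0}$.

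The translation case, however, has a genuine gap, and it is exactly the subtlety you flag at the end without resolving. Your central claim is that ``there is at most one coloring of $X$ that is both an extension of $c$ and invariant under a translation,'' justified by saying each blank $w$ receives the common $c$-value on the uncolored points of ``its orbit.'' But the orbit in question depends on \emph{which} translation is permitted: if $c_1$ permits $\gamma_1$ and $c_2$ permits $\gamma_2$, then $c_1(w)$ is forced by the $c$-colors on the $\gamma_1$-orbit of $w$ while $c_2(w)$ is forced by the $c$-colors on the $\gamma_2$-orbit of $w$, and these are different subsets of $X\setminus W$ that could a priori carry different colors (e.g.\ two incommensurable irrational rotations of $\R/\Z$ whose orbits of $w$ are disjoint dense countable sets outside $W$). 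Your final sentence simply restates the desired conclusion rather than proving it. This is precisely where the paper does its real work: assuming $c_1(w)\neq c_2(w)$, it first rules out $\cO_w^{\gamma_1}=\cO_w^{\gamma_2}$ via the Divisibility Condition (the orbit would have to lie inside $W$, forcing order $2$--$4$), handles the case of two even finite orders via the point $w+\frac12\notin W$, and in the remaining cases uses the Divisibility Condition together with $|W|=4$ to produce, in each $\gamma_1$-orbit $\cO$, a point $x$ with both $x$ and $\gamma_2(x)$ outside $W$. This shows $\gamma_2$ carries $\gamma_1$-orbits to $\gamma_1$-orbits of the same $c_1$-color, and iterating until $\gamma_2^{\ell}(w)\notin W$ yields $c_1(w)=c_1(\gamma_2^{\ell}(w))=c(\gamma_2^{\ell}(w))=c_2(\gamma_2^{\ell}(w))=c_2(w)$, the needed contradiction. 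Without an argument of this kind linking the two orbit partitions, your uniqueness claim for the translation-permitting extension is unsupported.
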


\begin{proof}
Since $\tau_{w_0}$ sends elements in $W\setminus\{w_0\}$ to elements not in $W$, the only extension that might permit $\tau_{w_0}$ is the coloring $c(w) = c(\tau_{w_0}(w))$ for all $w \in W- w_0$.

Assume that $c_1$ and $c_2$ are two distinct extensions of $c$ that permit translations $\gamma_1$ and $\gamma_2$, respectively, defined by $\gamma_1:x\mapsto x+\alpha_1$ and $\gamma_2:x\mapsto x+\alpha_2$ for some nonzero real numbers $\alpha_1$ and $\alpha_2$. 
Let $w\in W$ such that $c_1(w)\neq c_2(w)$.
For each $i \in \{1,2\}$, define $\cO_w^i$ to be $\{\gamma_i^j(w):j\in\Z\}$, the orbit of $w$ under $\gamma_i$.
If $\cO_w^1 = \cO_w^2$ then $\cO_w^1\subset W$, since the color of $\cO_w^1$ under $c_1$ must disagree with the color of $\cO_w^2$ under $c_2$.
This follows because $\gamma_1$ preserves $c_1$, so $c_1(w)=c_1(x)$ for all $x\in \cO_w$, and $\gamma_1$ preserves $c_2$, so $c_2(w)=c_2(x)$ for all $x\in \cO_w$.
Therefore $2\leq |\cO_w|\le 4$, violating the Divisibility Condition.
Thus, if $\gamma_1$ and $\gamma_2$ have finite orders (hence $X\neq \R$), then they have distinct orders.

Suppose that $\gamma_1$ has order $2$ and $\gamma_2$ has even order.
Therefore $w+\frac12$ is in both the $\gamma_1$-orbit and $\gamma_2$-orbit of $w$, and by the Divisibility Condition $w+\frac12\notin W$.
Therefore $c_1(w)=c_1(w+\frac12)=c_2(w+\frac12)=c_2(w)$, a contradiction.

In the remaining cases, we assume without loss of generality that $\gamma_1$ has order at least $3$ and $\gamma_2$ either has order $2$ or order greater than the order of $\gamma_1$.
Thus, for all  $\gamma_1$-orbits $\mathcal O$, we have that $|\mathcal O|\ge 3$, $\cO$ is monochromatic in $c_1$, and $\gamma_2(\mathcal O)\cap \mathcal O=\varnothing$.
If the order of $\gamma_1$ is at most $5$, then $|\mathcal O\cap W|\le 1$ and $|\gamma_2(\mathcal O)\cap W|\le 1$ by the Divisibility Condition.
Therefore there is $x\in \mathcal O$ such that $x$ and $\gamma_2(x)$ are not in $W$, and both $\mathcal O$ and $\gamma_2(\mathcal O)$ are the same color under $c_1$.
If $\gamma_1$ has order at least $6$, then there are at least six pairs of the form $(x,\gamma_2(x))$ where $x\in \mathcal O$, so there is $x\in \mathcal O$ such that $x$ and $\gamma_2(x)$ are not in $W$.
Therefore $\mathcal O$ and $\gamma_2(\mathcal O)$ are the same color in under $c_1$.
It then follows that for all integers $k$, the orbits $\mathcal O$ and $\gamma_2^k(\mathcal O)$ are the same color under $c_1$.

By Observation~\ref{obs:2elts}, there is a value $\ell_2$ such that $\gamma_2^{\ell_2}(w)\notin W$.
Again, let $\cO_w$ be the $\gamma_1$-orbit of $w$, and note that $\cO_w$ and $\gamma_2^{\ell_2}(\cO_w)$ have the same color under $c_1$.
Therefore,
$$c_1(w)=c_1(\gamma_2^{\ell_2}(w))=c_2(\gamma_2^{\ell_2}(w))=c_2(w),$$
a contradiction.
\end{proof}

\section{Proofs of the Main Theorems}
\label{sec:proof}

In this section, we will complete the proofs of Theorem~\ref{thm:main1}, \ref{thm:main3}, and \ref{thm:main2} by proving Theorem~\ref{thm:divisibility}.

Let $W$ be a set of four elements in $X$ satisfying the Divisibility Condition, and let $w_0\in W$ be the element guaranteed by Lemma~\ref{lma:divisibilityreflection}.
Let $c : X \setminus W \to \{\red,\blue\}$ be a precoloring of $X \setminus W$.
We use a sequence of extremal choices to extend $c$ to a coloring $c_1$ of $X\setminus\{w_0\}$.
We then select a subset $W' \subseteq W\setminus\{w_0\}$ and obtain the coloring $c_2$ by changing the colors of the elements in $W'$.
Our choices will guarantee that both $c_1$ and $c_2$ are not the (at most) two forbidden colorings guaranteed by Lemma~\ref{lma:nonforbidden}.
By assumption, neither $c_1$ nor $c_2$ extends to a distinguishing coloring, and we then use Fact~\ref{prop:sigmatraversal} to derive a contradiction.

%\marginpar{\tiny{Set this up to work for finite as well.}}
%We first handle the case when there is a nonforbidden extension $c_1$ of $c$ to $x\setminus\{w_0\}$ such that the translation generated by $c_1$ has infinte order.
Suppose that $c_1$ is a non-forbidden extension of $c$ to $X \setminus \{w_0\}$.
Let $\cO_{0}$ be the orbit of $w_0$ under the rotation $\sigma_1$ generated by $c_1$.
Let $\cO_0^{\red}$ be the set of elements in $\cO_0$ colored red by $c_1$ and let $\cO_0^{\blue}$ be the set of elements in $\cO_0$ colored blue by $c_1$.

\begin{hypothesis}[Selection criteria for $c_1$]
%Let $e_1,\ldots,e_k$ be the nonforbidden extensions of $c$ to $X\setminus\{w_0\}$.
%Let $\cO_{e_i}$ be the orbit of $w_0$ under $\sigma_{e_i}$, the rotation generated by $e_i$.
%Order $e_1,\ldots,e_k$ so that if $i>j$, then $|\cO_{e_i}|\ge |\cO_{e_j}|$ and $\cO_{e_i}\not\subset \cO_{e_j}$
Among the non-forbidden extensions of $c$ to $X\setminus\{w_0\}$, select $c_1$ so that the following conditions are satisfied:

\begin{selectionrule}
(C0)  $|\cO_0|$ is maximum.
\end{selectionrule}

\begin{selectionrule}
(C1) Subject to (C0), if $c'$ is a nonforbidden extension of $c$ to $X\setminus\{w_0\}$ and
 $\cO_{c'}$ is the orbit of $w_0$ under the action of the rotation generated by $c'$, then $\cO_0\not\subset\cO_{c'}$.
\end{selectionrule}

\begin{selectionrule}
(C2) Subject to (C0) and (C1), $\min\{|\cO_0^{\red}|,|\cO_0^{\blue}|\}$ is maximum.
\end{selectionrule}

%\begin{selectionrule}
%(I3) Otherwise, let $a \in \{\red,\blue\}$ be a color such that $\cO_0$ has a finite number of elements colored $a$.
%Select $c_1$ from the non-forbidden, non-monochromatic colorings of $W\setminus\{w_0\}$ so that the number elements colored $a$ in $\cO_0$ is maximized.
%\end{selectionrule}

\end{hypothesis}

Note that these conditions are satisfied by at least one coloring since there are a finite number of extensions of $c$ to $X \setminus \{w_0\}$.
We now discuss some structural relationships between the rotation $\sigma_1$ and the locations of the elements of $W$ subject to the selection hypothesis.

\begin{observation}\label{obs:notbothsigmas}
Since the reflection about $w_0$ sends all elements of $W \setminus \{w_0\}$ to elements not in $W$, we have that
\begin{enumerate}
\item there is at most one element of $W$ among $\sigma_1(w_0)$ and $\sigma_1^{-1}(w_0)$, and
\item there is at most one element of $W$ among $\sigma_1(w_0)+\frac{1}{2}$ and $\sigma_1^{-1}(w_0)+\frac{1}{2}$.
\end{enumerate}
\end{observation}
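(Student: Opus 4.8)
The plan is to use the single fact that the reflection $\tau_{w_0}$ is an involution which, in each of the two parts, interchanges the two listed points. Then having both members of a pair inside $W$ would put some element of $W\setminus\{w_0\}$ together with its $\tau_{w_0}$-image in $W$, contradicting the property of $w_0$ supplied by Lemma~\ref{lma:divisibilityreflection}.

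First I would record the elementary computation. Writing $\tau_{w_0}(x)=2w_0-x$ and $\sigma_1$ as the translation $x\mapsto x+\alpha$, so that $\sigma_1^{\pm1}(w_0)=w_0\pm\alpha$, one sees immediately that $\tau_{w_0}$ swaps $\sigma_1(w_0)$ with $\sigma_1^{-1}(w_0)$; on the circle (and hence on cycles) the same computation modulo $1$ shows that $\tau_{w_0}$ also swaps $\sigma_1(w_0)+\tfrac12$ with $\sigma_1^{-1}(w_0)+\tfrac12$, which is just the statement that the reflection about $w_0$ on the circle has the antipodal pair of fixed points $w_0$ and $w_0+\tfrac12$.

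Next I would clear away the one degenerate case. Since a nontrivial translation has no fixed point, $\sigma_1(w_0)\ne w_0\ne\sigma_1^{-1}(w_0)$, and $\sigma_1(w_0)=\sigma_1^{-1}(w_0)$ can occur only when $\sigma_1^2(w_0)=w_0$, i.e.\ $\sigma_1$ has order $2$; in that case each of the two listed pairs collapses to a single point ($w_0+\tfrac12$ and $w_0$, respectively), so the claim is trivial. Hence I may assume $\sigma_1$ has order $\ne2$ — automatic when $X=\R$ — and then each pair consists of two distinct points, neither of which equals $w_0$ (again: $\sigma_1(w_0)+\tfrac12=w_0$ would force order $2$). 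Now if both points $p,q$ of one pair lay in $W$, then $p\in W\setminus\{w_0\}$ while $\tau_{w_0}(p)=q\in W$, contradicting Lemma~\ref{lma:divisibilityreflection}; so at most one point of each pair lies in $W$. For $X=\R$, part~(1) is exactly this argument, while the quantity $\sigma_1(w_0)+\tfrac12$ in part~(2) only plays a role when order-$2$ translations are present, i.e.\ when $X$ is the circle or a cycle, where the circle computation above applies.

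I expect the only mildly delicate point to be exactly this bookkeeping: isolating the order-$2$ case and confirming that the two points in each pair are distinct and different from $w_0$, so that Lemma~\ref{lma:divisibilityreflection} may be applied to the pair. Once that is in place, the substance of the observation is the one-line involution argument above.
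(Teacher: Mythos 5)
Your proof is correct and is exactly the argument the paper intends: the observation is stated as an immediate consequence of the defining property of $w_0$, namely that $\tau_{w_0}$ interchanges the two points in each listed pair, so both lying in $W$ would contradict Lemma~\ref{lma:divisibilityreflection}. Your extra bookkeeping (the order-$2$ degeneracy, which in fact cannot occur since $c_1(\sigma_1(w_0))=\red\neq\blue=c_1(\sigma_1^{-1}(w_0))$, and the restriction of part~(2) to the circle/cycle setting where $+\tfrac12$ is meaningful) is harmless and consistent with the paper's usage.
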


\begin{lemma}\label{lma:notbothtaus}
If $\min\{|\cO_0^{\red}|,|\cO_0^{\blue}|\} < \infty$, then  $\taur^{(1)}(w_0)$  and $\taub^{(1)}(w_0)$ are not both in $W$. 
In particular, $\taur^{(1)}(w_0)\notin W$ if $|\cO_0^{\red}| < |\cO_0^{\blue}|-1$  and $\taub^{(1)}(w_0)\notin W$ if $|\cO_0^{\blue}| < |\cO_0^{\red}|-1$.
\end{lemma}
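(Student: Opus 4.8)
The plan is to argue by contradiction using Fact~\ref{prop:sigmatraversal} and the selection criteria (C0)--(C2) on $c_1$. Suppose $\taur^{(1)}(w_0)$ and $\taub^{(1)}(w_0)$ are both in $W$. Recall that $\sigma_1 = \taub^{(1)}\circ\taur^{(1)}$, so $\sigma_1(w_0) = \taub^{(1)}(\taur^{(1)}(w_0))$ and, by Observation~\ref{obs:notbothsigmas}, at most one of $\sigma_1(w_0)$, $\sigma_1^{-1}(w_0)$ lies in $W$. Since $\taur^{(1)}(w_0),\taub^{(1)}(w_0)\in W$ and $\tau_{w_0}$ carries $W\setminus\{w_0\}$ off $W$, neither $\taur^{(1)}(w_0)$ nor $\taub^{(1)}(w_0)$ equals $\tau_{w_0}(\cdot)$ of the other; more usefully, both are genuine blanks. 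The key is that $w_0$'s orbit $\cO_0$ under $\sigma_1$ then contains $\taur^{(1)}(w_0)$ or $\taub^{(1)}(w_0)$ only in the ``finite'' or ``infinite blue/red'' configurations of Figure~\ref{fig:sigmatraversal}, and I want to rule out the resulting color count.

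**The main step: recoloring $w_0$ to enlarge the minority color class.** The finiteness hypothesis $\min\{|\cO_0^{\red}|,|\cO_0^{\blue}|\}<\infty$ forces $\cO_0$ to be finite or of ``infinite blue''/``infinite red'' type (one color class is finite). By (F0), $c_1(\sigma_1(w_0))=\red$ and $c_1(\sigma_1^{-1}(w_0))=\blue$, and by (F3), $\sigma_1$ is color-preserving away from $\{\taur^{(1)}(w_0),\sigma_1^{-1}(w_0),w_0\}$. Tracing the orbit as in Figure~\ref{fig:sigmatraversal}, the elements of $\cO_0$ split into a ``red run'' from $\sigma_1(w_0)$ up to $\taur^{(1)}(w_0)$ and a ``blue run'' from $\sigma_1^{-1}(w_0)$ back to $\taub^{(1)}(w_0)$; the minority class has size essentially the distance (in $\sigma_1$-steps) from $w_0$ to the nearer of $\taur^{(1)}(w_0)$, $\taub^{(1)}(w_0)$. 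If both $\taur^{(1)}(w_0)$ and $\taub^{(1)}(w_0)$ are blanks in $W$, then I can produce a competing non-forbidden extension $c'$ by recoloring $w_0$ — but more to the point, I can exhibit a non-forbidden extension whose $w_0$-orbit either properly contains $\cO_0$ (contradicting (C1) if $|\cO_0|$ were already maximal, or (C0)) or achieves a strictly larger value of $\min\{|\cO_0^{\red}|,|\cO_0^{\blue}|\}$ (contradicting (C2)). Concretely: since $\taur^{(1)}(w_0)\in W$, the coloring obtained from $c_1$ by recoloring $\taur^{(1)}(w_0)$ blue still permits no distinguishing extension (by assumption no extension does), and one checks it is non-forbidden and that its generated rotation has an orbit of $w_0$ that is strictly larger or more balanced, the contradiction.

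**Handling the two quantitative refinements.** For the ``in particular'' clauses, suppose $|\cO_0^{\red}| < |\cO_0^{\blue}| - 1$ but $\taur^{(1)}(w_0)\in W$. Then $\taur^{(1)}(w_0)$ is a blank, and recoloring it from red to blue (equivalently, passing to the extension $c'$ of $c$ to $X\setminus\{w_0\}$ agreeing with $c_1$ except $c'(\taur^{(1)}(w_0))=\blue$) yields a coloring for which, by (F1)--(F3), the permitted reflections and generated rotation are essentially unchanged but the red class of $\cO_0$ has shrunk by one and the blue class grown by one; since we started with a gap of at least $2$, the new $\min$ is $|\cO_0^{\red}|+1 > \min\{|\cO_0^{\red}|,|\cO_0^{\blue}|\}$, contradicting (C2) — after checking (C0) and (C1) are preserved (the orbit $\cO_0$ is unchanged as a set). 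The case $|\cO_0^{\blue}| < |\cO_0^{\red}| - 1$ with $\taub^{(1)}(w_0)\in W$ is symmetric, recoloring $\taub^{(1)}(w_0)$ red.

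**Expected obstacle.** The delicate point is verifying that the recolored extension $c'$ is still \emph{non-forbidden} and that recoloring $\taur^{(1)}(w_0)$ (or $\taub^{(1)}(w_0)$) does not secretly change which reflections are permitted — i.e., that the red and blue reflections of $c'$ are still $\taur^{(1)}$ and $\taub^{(1)}$, so that $\sigma_1$ and $\cO_0$ are genuinely unchanged and (C0), (C1) still hold for $c'$. This requires carefully re-deriving the color-preserving properties of $\taur^{(1)},\taub^{(1)}$ under $c'$ from Fact~\ref{prop:sigmatraversal}, using that $\taur^{(1)}(w_0)$ and $\taub^{(1)}(w_0)$ are the unique elements whose $\sigma_1$-behavior is exceptional; the Divisibility Condition and Observation~\ref{obs:notbothsigmas} are what keep these exceptional elements from colliding with other blanks of $W$. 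Once that bookkeeping is in place, the extremality contradiction is immediate.
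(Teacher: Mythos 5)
Your overall strategy for the main case --- recolor one of the two exceptional points and contradict the extremal choice (C2) while keeping $\sigma_1$ and $\cO_0$ fixed --- is the paper's strategy, but the execution has a genuine error: the recoloring goes in the wrong direction. Under $c_1$ we have $c_1(\taur^{(1)}(w_0))=\red$ and $c_1(\taub^{(1)}(w_0))=\blue$, so if $|\cO_0^{\red}| < |\cO_0^{\blue}|-1$ (red is the strict minority), recoloring $\taur^{(1)}(w_0)$ from red to blue changes the counts to $|\cO_0^{\red}|-1$ and $|\cO_0^{\blue}|+1$, so the new minimum is $|\cO_0^{\red}|-1$, not the $|\cO_0^{\red}|+1$ you claim; (C2) is not violated and there is no contradiction. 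The move that works is to recolor the \emph{blue} point $\taub^{(1)}(w_0)$ to red, which raises the minimum to $|\cO_0^{\red}|+1$ precisely because the gap is at least $2$; this is what the paper does, and note that what it thereby proves is $\taub^{(1)}(w_0)\notin W$ when $|\cO_0^{\red}|<|\cO_0^{\blue}|-1$ (the ``in particular'' clauses as printed appear to have the two colors transposed, which is likely what led you astray). A second false step in the same paragraph: the permitted reflections of the recolored $c'$ are \emph{not} still $\taur^{(1)}$ and $\taub^{(1)}$ --- e.g.\ $\taur^{(1)}$ sends the now-red point $\taub^{(1)}(w_0)$ to the blue point $\sigma_1^{-1}(w_0)$, so it is no longer color-preserving. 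The paper instead identifies the new reflections explicitly ($\taur'(w_0)=\taub^{(1)}(w_0)$ and $\taub'(w_0)=\sigma_1(\taub^{(1)}(w_0))$) and observes that their composition is still $\sigma_1$, which is all that is needed; it also carries out the non-forbidden check for $c'$ (no rotation is permitted because $\cO_0$ is the unique nonmonochromatic $\sigma_1$-orbit with a blue majority, and $\tau_{w_0}$ is not permitted because $c'(\sigma_1(w_0))\neq c'(\sigma_1^{-1}(w_0))$), a step you defer rather than perform.

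Separately, the recoloring/(C2) argument cannot cover the whole lemma, and your proposal omits the two boundary cases where it fails. If $|\cO_0^{\red}|=|\cO_0^{\blue}|<\infty$, recoloring either point only decreases the minimum, so no contradiction with (C2) is available; here the paper uses a different, one-line argument: balance forces $\tau_{w_0}(\taur^{(1)}(w_0))=\taub^{(1)}(w_0)$, so the two points cannot both lie in $W$ by the defining property of $w_0$ from Lemma~\ref{lma:divisibilityreflection}. If $\bigl||\cO_0^{\red}|-|\cO_0^{\blue}|\bigr|=1$, the recoloring leaves the minimum unchanged, so again (C2) gives nothing; here one of $\taur^{(1)}(w_0),\taub^{(1)}(w_0)$ equals $w_0+\frac{1}{2}$, which is not a blank by the Divisibility Condition. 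You allude to both the definition of $w_0$ and the Divisibility Condition in passing, but without isolating these two cases the proof is incomplete: they are exactly the cases where your main mechanism produces no contradiction.
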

\begin{proof}
If $|\cO_0^{\red}| = |\cO_0^{\blue}| < \infty$, then  $\tau_{w_0}(\taur^{(1)}(w_0)) = \taub^{(1)}(w_0)$, and the statement follows by the definition of $w_0$.
If $\left||\cO_0^{\red}| - |\cO_0^{\blue}|\right|  = 1$, then $X\neq \R$ and one of $\taur^{(1)}(w_0)$ or $\taub^{(1)}(w_0)$ is equal to $w_0 + \frac{1}{2}$.
The element  $w_0 + \frac{1}{2}$ is not a blank by the Divisibility Condition.

Otherwise, suppose that $|\cO_0^{\red}| < |\cO_0^{\blue}| - 1$ and hence $|\cO_0^{\red}| < \infty$; the case when $|\cO_0^{\blue}| < |\cO_0^{\red}| - 1$ follows by a similar argument.
If $\taub^{(1)}(w_0) \in W$, then let $c'$ be the coloring that matches $c_1$ with the exception that $c'(\taub^{(1)}(w_0))=\red$.
Observe that $\cO_0$ is the only $\sigma_1$-orbit that is nonmonochromatic  under $c'$.
Let $\tau'$ be a reflection and $\sigma'$ be a rotation.
Note that $\tau'(\cO_0)$ and $\sigma'(\cO_0)$ are both $\sigma_1$-orbits.
Since at least half of the elements in $\cO_0$ are blue under $c'$, the $\sigma'$-orbit of $\sigma_1(w_0)$ contains a blue element under $c'$, and hence $c'$ does not permit $\sigma'$.
Since $c'(\sigma_1(w_0))\neq c'(\sigma_1^{-1}(w_0))$, $c'$ does not permit $\tau_{w_0}$ and thus $c'$ is not forbidden.

Therefore, the two extensions of $c'$ to $w_0$ only permit  reflections, call them $\taur'$ and $\taub'$ when $c'(w_0) = \red$ and $c'(w_0) = \blue$, respectively.
Observe that $\taur'(w_0)=\taub^{(1)}(w_0)$ and $\taub'(w_0)=\sigma_1(\taub^{(1)}(w_0))$.
Thus $\taub'\circ \taur'=\sigma_1$, so $\sigma_1$ is the rotation generated by $c'$.
However, $\cO_0$ has more red elements under $c'$ than $c_1$, contradicting (C2) in the selection criteria for $c_1$.
\end{proof}

We define a set $W' \subseteq W\setminus\{w_0\}$ depending on the number of elements of each color in $\cO_0$ as follows.

\begin{hypothesis}[Selection of $W'$]\quad
\begin{enumerate}
\item If $\{\taur^{(1)}(w_0),\taub^{(1)}(w_0)\} \subseteq W$, then let $W' = \{\taur^{(1)}(w_0),\taub^{(1)}(w_0)\}$.
\item If $\{\taur^{(1)}(w_0),\taub^{(1)}(w_0)\} \nsubseteq W$ and $\min\{ |\cO_0^{\red}|, |\cO_0^{\blue}|\} > 1$, then select 
\[
	w' \in W \setminus \{ w_0, \sigma_1(w_0), \sigma_1^{-1}(w_0), \taur^{(1)}(w_0), \taub^{(1)}(w_0)\},
\]
and let $W' = \{w'\}$.

\item If $\{\taur^{(1)}(w_0),\taub^{(1)}(w_0)\} \nsubseteq W$ and $\min\{ |\cO_0^{\red}|, |\cO_0^{\blue}|\} = 1$, then select 
\[
	w' \in W \setminus \left\{ w_0, \sigma_1(w_0), \sigma_1^{-1}(w_0), \taur^{(1)}(w_0), \taub^{(1)}(w_0), \sigma_1(w_0)+\frac{1}{2}, \sigma_1^{-1}(w_0)+\frac{1}{2}\right\},
\] 
and let $W' = \{w'\}$.
\end{enumerate}
%\begin{enumerate}
%\item If $|\cO_0^{\red}|=|\cO_0^{\blue}|$, then
%\[W' = \begin{cases}
%\{\taur^{(1)}(w_0),\taub^{(1)}(w_0)\}&\text{if }\{\taur^{(1)}(w_0),\taub^{(1)}(w_0)\}\subseteq W\\
%W\setminus\{w_0,\taur^{(1)}(w_0),\taub^{(1)}(w_0),\sigma_1(w_0),\sigma_1^{-1}(w_0)\}&\text{if }\{\taur^{(1)}(w_0),\taub^{(1)}(w_0)\}\nsubseteq W.
%\end{cases}\]
%
%\item If $|\cO_0^{\red}|\neq|\cO_0^{\blue}|$, then
%\[
%W' = \begin{cases}
%W\setminus\{w_0,\sigma_1(w_0),\sigma_1^{-1}(w_0),\taur^{(1)}(w_0)\} & \text{if }|\cO_0^{\red}| < |\cO_0^{\blue}|\\
%W\setminus\{w_0,\sigma_1(w_0),\sigma_1^{-1}(w_0),\taub^{(1)}(w_0)\} & \text{if }|\cO_0^{\red}| > |\cO_0^{\blue}|.
%\end{cases}
%\]
%\end{enumerate}
\end{hypothesis}

We claim that the above choice is well-defined.
If $\{\taur^{(1)}(w_0),\taub^{(1)}(w_0)\}$ is not a subset of $W$ and $\min\{ |\cO_0^{\red}|, |\cO_0^{\blue}|\} > 1$, then such an element $w'$ exists by Observation~\ref{obs:notbothsigmas} and Lemma~\ref{lma:notbothtaus}.
If $\min\{ |\cO_0^{\red}|, |\cO_0^{\blue}|\} = 1$, then either $\sigma_1(w_0) = \taur^{(1)}(w_0)$ or $\sigma_1^{-1}(w_0) = \taub^{(1)}(w_0)$, and by Lemma~\ref{lma:notbothtaus} there is at most one blank among $\{\sigma_1(w_0), \taur^{(1)}(w_0), \taub^{(1)}(w_0), \sigma_1^{-1}(w_0)\}$.
Furthermore, Observation~\ref{obs:notbothsigmas} implies there is at most one blank among $\sigma_1(w_0)+\frac{1}{2}$ and $\sigma_1^{-1}(w_0)+\frac{1}{2}$, and hence the choice for $w'$ is possible.

Define $c_2$ to be the coloring obtained from $c_1$ by changing the colors of the elements in $W'$.
We will show that $c_2$ is not forbidden.

\begin{lemma}\label{lem:c_2notforbA}
The coloring $c_2$ does not permit the translation of order 2.% permits the translation of order 2, then there exists a distinguishing coloring.
\end{lemma}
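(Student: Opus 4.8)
The plan is to argue by contradiction, along the lines already set up in the overview of Section~\ref{sec:overview}. Suppose that $c_2$ permits the translation $\sigma$ of order $2$, i.e. $\sigma : x \mapsto x + \frac{1}{2}$. Since $c_2$ is obtained from $c_1$ by recoloring only the (at most two) elements of $W'$, and $c_1$ agrees with the original precoloring $c$ off $W$, the two colorings $c_1$ and $c_2$ differ only on $W' \subseteq W \setminus \{w_0\}$. I would first record that $\sigma$ pairs up $X \setminus \{w_0, w_0 + \tfrac12\}$ into orbits $\{x, x+\tfrac12\}$, and that $w_0 + \tfrac12 \notin W$ by the Divisibility Condition (with $k=2$), so the $\sigma$-orbit of $w_0$ is $\{w_0, w_0+\tfrac12\}$ with exactly one blank. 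For $c_2$ to permit $\sigma$, every other $\sigma$-orbit must be monochromatic under $c_2$, and we may then color $w_0$ to match $w_0 + \tfrac12$ to obtain a distinguishing-failing extension — but the point is to rule out the monochromaticity.

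The heart of the argument is to compare the $\sigma$-orbits against the structure forced by $c_1$ via Fact~\ref{prop:sigmatraversal}. Here I would split into the three cases of the Selection of $W'$ hypothesis. In case~1, $W' = \{\redreflect1(w_0), \bluereflect1(w_0)\}$; I would use that $\tau_{w_0}$ sends $\redreflect1(w_0)$ to $\bluereflect1(w_0)$ (when $|\cO_0^\red| = |\cO_0^\blue|$), so $\bluereflect1(w_0) = \redreflect1(w_0) + \tfrac12$, meaning the two recolored elements form a single $\sigma$-orbit; then the $\sigma$-orbit of every element of $W$ other than these must already be monochromatic under $c_1$, and one shows this forces $c_1$ itself to have permitted $\sigma$ on all of $X \setminus \{w_0, w_0+\tfrac12\}$, contradicting that $c_1$ is non-forbidden (a non-forbidden extension permits no translation). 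In cases~2 and~3, $W' = \{w'\}$ is a single element chosen to avoid $\sigma_1(w_0)$, $\sigma_1^{-1}(w_0)$, $\redreflect1(w_0)$, $\bluereflect1(w_0)$ (and in case~3 also $\sigma_1(w_0)+\tfrac12$ and $\sigma_1^{-1}(w_0)+\tfrac12$); here recoloring a single point $w'$ changes exactly one $\sigma$-orbit, namely $\{w', w'+\tfrac12\}$, so for $c_2$ to permit $\sigma$ we need $c_1(w'+\tfrac12) \neq c_1(w')$ (the recoloring repairs this orbit) while every other $\sigma$-orbit is monochromatic under $c_1$. Again this would force $c_1$ to permit $\sigma$ on $X \setminus \{w_0, w_0 + \tfrac12, w', w'+\tfrac12\}$; combined with the fact that $\sigma_1$, the rotation generated by $c_1$, is (by definition of the red and blue reflections) nontrivial and nearly color-preserving by Fact~\ref{prop:sigmatraversal}, one gets a genuine contradiction — either $c_1$ is forbidden after all, or we can exhibit a distinguishing extension of $c_1$, both contrary to hypothesis.

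A cleaner way to organize cases~2 and~3, which I would prefer, is: since changing one point of a precoloring can fix at most one bad $\sigma$-orbit, if $c_2$ permits $\sigma$ then $c_1$ fails to permit $\sigma$ \emph{only because} of the single orbit $\{w',w'+\tfrac12\}$ (plus the forced blank orbit at $w_0$). Pick any element $x_0$ of $W$ lying in none of these special orbits — such an element exists by the Divisibility Condition, Observation~\ref{obs:notbothsigmas}, and Lemma~\ref{lma:notbothtaus}, since $|W| = 4$ and we have excluded only a bounded list of orbits — then $x_0$ and $x_0 + \tfrac12$ get the same color under both $c_1$ and $c_2$, and this together with the near-color-preservation of $\sigma_1$ under $c_1$ (Fact~\ref{prop:sigmatraversal}, properties (F1)--(F3)) lets one propagate colors around the lattice generated by $\sigma$ and $\sigma_1$ to show $c_1$ would actually extend to a $\sigma$-preserving coloring, i.e. $c_1$ is forbidden — contradiction.

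The main obstacle I anticipate is the bookkeeping in ensuring that a suitable ``untouched'' element of $W$ (in the single-element cases) or the precise matching $\bluereflect1(w_0) = \redreflect1(w_0) + \tfrac12$ (in case~1) genuinely follows from the selection hypotheses and Lemmas~\ref{lma:notbothtaus} and Observation~\ref{obs:notbothsigmas}: one has to verify that after excluding the images of $w_0$ under $\sigma_1^{\pm1}$, the reflections $\redreflect1(w_0)$, $\bluereflect1(w_0)$, and their half-shifts, at least one of the four blanks survives, and that no degeneracy (e.g.\ $|\cO_0|$ very small, or $\sigma_1$ itself of order $2$ or $4$) collapses the argument. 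Handling $|\sigma_1| \in \{2,3,4,5\}$ may require separate short treatment, but the Divisibility Condition was exactly tailored to kill these small cases.
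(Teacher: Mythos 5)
There is a genuine gap, and it affects both halves of your argument. First, a concrete error in your Case 1: from $\sigma_1 = \bluereflect{1}\circ\redreflect{1}$ one gets $\bluereflect{1}(w_0) = \sigma_1(\redreflect{1}(w_0))$, so the two recolored points differ by the translation amount of $\sigma_1$, not by $\tfrac12$; they lie in a common orbit of the order-$2$ translation $\gamma$ only if $|\sigma_1|=2$, which is impossible since $c_1(\sigma_1(w_0))=\red\neq\blue=c_1(\sigma_1^{-1}(w_0))$. (Being interchanged by $\tau_{w_0}$ means being symmetric about $w_0$, which is not the same as differing by $\tfrac12$.) So the two recolored points lie in two \emph{distinct} $\gamma$-orbits.

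Second, and more importantly, the intended contradiction ``$c_1$ is forbidden'' does not follow from your repair-counting in any of the cases. If $c_2$ permits $\gamma$, then every $\gamma$-orbit other than that of $w_0$ is monochromatic under $c_2$; undoing the recoloring shows only that every $\gamma$-orbit \emph{disjoint from $W'$} is monochromatic under $c_1$, while the one or two orbits meeting $W'$ become non-monochromatic under $c_1$. That is entirely consistent with $c_1$ being non-forbidden, since non-forbiddenness only guarantees that \emph{some} bad $\gamma$-orbit exists, and the orbits through $W'$ supply it; note also that bad $\gamma$-orbits of $c_1$ need not meet $W$ at all, so examining only ``the $\sigma$-orbit of every element of $W$'' cannot suffice. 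The paper's proof instead exploits that $\gamma$ commutes with $\sigma_1$: preservation of $c_2$ by $\gamma$ forces the local color pattern along the $\sigma_1$-orbit at a recolored point to reappear at its $\gamma$-image, which lies outside $W'$ and therefore carries the $c_1$-colors, and properties (F1)--(F3) of Fact~\ref{prop:sigmatraversal} then locate where such a pattern can occur under $c_1$. Concretely, in Case 1 the pattern $\red,\blue,\red$ at $\sigma_1^{-1}(\redreflect{1}(w_0)),\redreflect{1}(w_0),\sigma_1(\redreflect{1}(w_0))$ would have to appear inside $\cO_0$, where it cannot; in Cases 2--3 the isolated-color pattern at $w'$ forces $\gamma(w')\in\{\sigma_1(w_0),\sigma_1^{-1}(w_0)\}$, i.e.\ $w'\in\{\sigma_1(w_0)+\tfrac12,\sigma_1^{-1}(w_0)+\tfrac12\}$, exactly the points excluded in the selection of $W'$. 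Your closing remark about ``propagating colors around the lattice'' is the right instinct, but without this specific use of commutativity together with Fact~\ref{prop:sigmatraversal} the argument does not close.
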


\begin{proof}
Let $\gamma$ be the translation of order $2$, and hence $X \neq \R$.
Suppose that $c_2$ permits $\gamma$.
Fix $c_2(w_0)$ such that $\gamma$ preserves $c_2$; for this proof, let $c_1(w_0) = c_2(w_0)$.
Consider cases based on how $W'$ was selected.

Suppose $W' = \{\taur^{(1)}(w_0), \taub^{(1)}(w_0)\}$, which implies that $|\cO_0^{\red}| = |\cO_0^{\blue}| = \infty$ by Lemma~\ref{lma:notbothtaus}.
Therefore, $\cO_0$ and the $\sigma_1$-orbit containing $\taur^{(1)}(w_0)$ are the two nonmonochromatic $\sigma_1$-orbits under $c_2$.
Since $|\cO_0| = \infty$, we have that $\gamma(\cO_0)$ is a distinct $\sigma_1$-orbit and hence $\gamma(\taur^{(1)}(w_0)), \gamma(\taub^{(1)}(w_0)) \in \cO_0$.
Recall $\sigma_1(\taur^{(1)}(w_0)) = \taub^{(1)}(w_0)$.
Hence $c_2(\sigma_1^{-1}(\taur^{(1)}(w_0))) = \red$, $c_2(\taur^{(1)}(w_0))=\blue$, and $c_2(\sigma_1(\taur^{(1)}(w_0)))=\red$.
There does not exist an integer $i$ such that $c_2(\sigma_1^{i-1}(w_0)) = \red$,  $c_2(\sigma_1^{i}(w_0)) = \blue$,  $c_2(\sigma_1^{i+1}(w_0)) = \red$, and hence $c_2$ does not permit $\gamma$.

We may now assume that $\{\taur^{(1)}(w_0),\taub^{(1)}(w_0)\} \nsubseteq W$ and $W' = \{ w'\}$.
Observe that $c_1(\sigma_1^{-1}(w')) = c_1(w')= c_1(\sigma_1(w'))$, but $c_2(\sigma_1^{-1}(w'))  \neq c_2(w')$ and $c_2(w') \neq c_2(\sigma_1(w'))$.
Let $x = \gamma(w') = w' + \frac{1}{2}$.
Since $|\sigma_1| \geq 3$, we conclude that $w' \notin \{ \sigma_1^{-1}(x), x, \sigma_1(x)\}$ and $c_1(y) = c_2(y)$ for all $y \in \{ \sigma_1^{-1}(x), x, \sigma_1(x)\}$.
Therefore, $c_1(\sigma_1^{-1}(x))  \neq c_1(x)$ and $c_1(x) \neq c_1(\sigma_1(x))$.
This implies that $x \in \cO_0$ and $x$ is the only element in $\cO_0$ of color $c_1(x)$ under $c_1$.
Hence $\min\{ |\cO_0^{\red}|, |\cO_0^{\blue}|\} = 1$ and $\gamma(w') \in \{ \sigma_1(w_0), \sigma_1^{-1}(w_0)\}$.
But this implies that $w' \in \left\{\sigma_1(w_0) + \frac{1}{2}, \sigma_1^{-1}(w_0) + \frac{1}{2}\right\}$, contradicting our choice of $w'$.
\end{proof}

%By the selection of $W'$, it is only possible for either $\sigma_1(w_0)$ or $\sigma_1^{-1}(w_0)$ to be in $W''$ if $|\cO_0^{\red}|=|\cO_0^{\blue}|$,  $\sigma_1(w_0) = \taur^{(1)}(w_0)$, and  $\sigma_1^{-1}(w_0) = \taub^{(1)}(w_0)$.
%However, in this case $|\cO_0|=3$ and $\cO_0 \subset W$, violating the Divisibility Condition.
%Therefore, the colors of $\sigma_1(w_0)$ and $\sigma_1^{-1}(w_0)$ do not change between $c_1$ and $c'$ and so $\cO_0$ contains at least one red element and at least one blue element under $c'$.

The following technical lemma will be used extensively.

\begin{lemma}\label{cl:gammaorder}\label{lma:boundarypts}
%Let $W''$ be a nonempty subset of $W'$.
%Let $c'$ be the coloring of $X\setminus\{w_0\}$ that is obtained by changing the color of the elements in $W''$.
Let $\gamma$ be a translation such that
\begin{enumerate}
\item $\gamma$ has order at least 3, 
\item the $\gamma$-orbits of $\sigma_1^{-1}(w_0)$, $w_0$, and  $\sigma_1(w_0)$ are distinct, and
\item $\gamma$ preserves $c_2$ on the $\gamma$-orbits of $\sigma_1^{-1}(w_0)$ and $\sigma_1(w_0)$.
\end{enumerate}
For $j \in \{-1,0,1\}$, let $S_j$ be the $\gamma$-orbit of $\sigma_1^j(w_0)$.
If $1\le i< |\gamma|$, then
\begin{enumerate}
\item $|\{\gamma^i(\sigma_1(w_0)),\gamma^i(\sigma_1^{-1}(w_0))\}\cap W'|\ge 1$,
\item $|W'| \ge |\gamma|-1$,  
\item $\gamma$ has order exactly 3, 
\item $W' \subset S_{-1} \cup S_1$, and
\item $W' = \{\taur^{(1)}(w_0), \taub^{(1)}(w_0)\}$.
\end{enumerate}
\end{lemma}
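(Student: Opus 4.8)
The plan is to prove conclusion~(1) first, since it carries essentially all of the content, and then obtain (2)--(5) by elementary counting using $|W|=4$ and the disjointness of the orbits $S_{-1},S_0,S_1$. Throughout I will write $a=\sigma_1(w_0)$ and $b=\sigma_1^{-1}(w_0)$; by Fact~\ref{prop:sigmatraversal}(F0) applied to the non-forbidden extension $c_1$ we have $c_1(a)=\red$ and $c_1(b)=\blue$. The first thing I would check is that $a,b\notin W'$: in the second and third branches of the \emph{Selection of $W'$}, $W'=\{w'\}$ with $w'$ chosen outside $\{w_0,\sigma_1(w_0),\sigma_1^{-1}(w_0)\}$; in the first branch $W'=\{\redreflect1(w_0),\bluereflect1(w_0)\}$, and there Lemma~\ref{lma:notbothtaus} forces $|\cO_0^{\red}|=|\cO_0^{\blue}|=\infty$, so $\redreflect1(w_0)=a$ (resp.\ $\bluereflect1(w_0)=b$) would imply $|\cO_0^{\red}|=1$ (resp.\ $|\cO_0^{\blue}|=1$), while $\redreflect1(w_0)=b$ and $\bluereflect1(w_0)=a$ are impossible on color grounds. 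Consequently $c_2$ agrees with $c_1$ at $a$ and $b$, so $c_2(a)=\red$ and $c_2(b)=\blue$.

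To prove~(1) I would fix $i$ with $1\le i<|\gamma|$ and suppose, for contradiction, that neither $\gamma^i(a)$ nor $\gamma^i(b)$ lies in $W'$. Since the $\gamma$-orbits $S_{-1},S_0,S_1$ are distinct, $\gamma^i(a)$ and $\gamma^i(b)$ are not $w_0$, so $c_1$ and $c_2$ are defined there; and since $\gamma$ preserves $c_2$ on $S_1$ and $S_{-1}$, we get $c_2(\gamma^i(a))=c_2(a)=\red$ and $c_2(\gamma^i(b))=c_2(b)=\blue$, hence (as these points are not in $W'$) the same colors under $c_1$. Now $b=\sigma_1^{-2}(a)$, and $\gamma$ commutes with $\sigma_1$ because both are translations, so $\sigma_1^2(\gamma^i(b))=\gamma^i(\sigma_1^2(b))=\gamma^i(a)$. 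Thus, writing $x=\gamma^i(b)$, we have $c_1(x)=\blue$ and $c_1(\sigma_1^2(x))=\red$, so Fact~\ref{prop:sigmatraversal}(F2) forces $x=\sigma_1^{-1}(w_0)=b$, i.e.\ $\gamma^i(b)=b$. But a translation with a fixed point is the identity, contradicting $1\le i<|\gamma|$. This establishes~(1).

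With~(1) in hand the rest is counting. For~(2): the points $\gamma^1(a),\dots,\gamma^{|\gamma|-1}(a)$ are distinct and lie in $S_1$, the points $\gamma^1(b),\dots,\gamma^{|\gamma|-1}(b)$ are distinct and lie in $S_{-1}$, and $S_1\cap S_{-1}=\varnothing$ since $S_1\ne S_{-1}$; by~(1) each $i\in\{1,\dots,|\gamma|-1\}$ contributes to $W'$ one of $\gamma^i(a),\gamma^i(b)$, and these contributions are pairwise distinct, so $|W'|\ge|\gamma|-1$. Since the \emph{Selection of $W'$} gives $|W'|\le 2$, this yields $|\gamma|\le 3$, and together with the hypothesis $|\gamma|\ge 3$ we get $|\gamma|=3$, which is~(3). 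Then~(2) forces $|W'|=2$, and applying~(1) with $i=1$ and $i=2$ exhibits the two elements of $W'$ as lying in $\{\gamma(a),\gamma(b)\}$ and $\{\gamma^2(a),\gamma^2(b)\}$ respectively, both contained in $S_{-1}\cup S_1$; this is~(4). Finally, $|W'|=2$ occurs only in the first branch of the \emph{Selection of $W'$}, where by definition $W'=\{\redreflect1(w_0),\bluereflect1(w_0)\}$; this is~(5).

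The step I expect to be the main obstacle is conclusion~(1), and within it two points require care: verifying that $\sigma_1(w_0)$ and $\sigma_1^{-1}(w_0)$ are never among the recolored blanks (a short case check over the three branches of the $W'$-selection, invoking Lemma~\ref{lma:notbothtaus} in the first), and the key move that turns a hypothetical pair of $c_2$-monochromatic $\gamma$-orbits into a forbidden $\blue\to\red$ jump of $\sigma_1^2$ --- this rests on commutativity of translations to rewrite $\gamma^i(\sigma_1(w_0))$ as $\sigma_1^2(\gamma^i(\sigma_1^{-1}(w_0)))$, together with the rigidity statement Fact~\ref{prop:sigmatraversal}(F2). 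Everything after~(1) is routine bookkeeping with $|W|=4$ and the disjointness of $S_{-1},S_0,S_1$.
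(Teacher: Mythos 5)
Your proposal is correct and follows essentially the same route as the paper: the heart of both arguments is that if $\gamma^i(\sigma_1(w_0))$ and $\gamma^i(\sigma_1^{-1}(w_0))$ both avoided $W'$, then (using commutativity of translations) $\sigma_1^2$ would carry a blue element to a red element away from $\sigma_1^{-1}(w_0)$, contradicting Fact~\ref{prop:sigmatraversal}, after which (2)--(5) are the same counting with $|W'|\le 2$. Your explicit verification that $\sigma_1(w_0),\sigma_1^{-1}(w_0)\notin W'$ is a detail the paper states elsewhere and takes for granted here, so it is a welcome but not substantively different addition.
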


\begin{proof}
If $\{\gamma^i(\sigma_1(w_0)),\gamma^i(\sigma_1^{-1}(w_0))\}\cap W' = \varnothing$ for some $i \in [|\gamma|-1]$, then $c_1(\gamma^i(\sigma_1^{-1}(w_0)))=\blue$ and $c_1(\gamma^i(\sigma_1(w_0)))=\red$.
Therefore, $\sigma_1^2$ maps the blue element $\gamma^i(\sigma_1^{-1}(w_0))$ to the red element $\gamma^i(\sigma_1(w_0))$ under $c_1$.
By Fact~\ref{prop:sigmatraversal}, we conclude that $\gamma^i(\sigma_1^{-1}(w_0))=\sigma_1^{-1}(w_0)$ and $\gamma^i(\sigma_1(w_0))=\sigma_1(w_0)$, contradicting the assumption that $1\le i \le |\gamma|-1$.
Therefore $c_1(\gamma^i(\sigma_1(w_0)))=\blue$ or $c_1(\gamma^i(\sigma_1^{-1}(w_0)))=\red$, and consequently at least one of these elements must be in $W'$, proving conclusion 1.
Conclusions 2--4 follow directly from conclusion 1 and Lemma~\ref{lem:c_2notforbA}.
Conclusion 5 follows since $\{\taur^{(1)}(w_0), \taub^{(1)}(w_0)\}$ is the only choice of $W'$ that has two elements.
\end{proof}

\begin{lemma}\label{lem:c_2notforb}
%Let $W''$ be a nonempty subset of $W'$.
%If $c'$ is the coloring of $X\setminus\{w_0\}$ that is obtained by changing the color of the elements in $W''$ and $c'$ does not permit the translation of order $2$, then $c'$ is not forbidden.
$c_2$ is not forbidden.
\end{lemma}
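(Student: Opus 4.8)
The plan is to show that $c_2$ permits no forbidden action; by Lemma~\ref{lem:c_2notforbA} it already cannot permit the translation of order~$2$, so it remains to rule out (i) $c_2$ permitting the reflection $\tau_{w_0}$ about $w_0$, and (ii) $c_2$ permitting a translation of order at least $3$ (including an infinite-order translation when $X = \R$ or $\R/\Z$). For the reflection $\tau_{w_0}$: recall that by the selection of $w_0$ via Lemma~\ref{lma:divisibilityreflection}, $\tau_{w_0}$ sends every element of $W \setminus \{w_0\}$ outside $W$, so a precoloring permits $\tau_{w_0}$ only via the unique completion $c(w) = c(\tau_{w_0}(w))$ on $W \setminus \{w_0\}$. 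Since $c_1$ is non-forbidden it does not permit $\tau_{w_0}$, and I would argue that changing the colors on $W'$ only moves further away from that unique $\tau_{w_0}$-symmetric completion — more precisely, $c_2$ disagrees with $c_1$ on $W'$, which is a nonempty subset of $W \setminus \{w_0\}$, and I would check directly in each of the three cases of the ``Selection of $W'$'' hypothesis that the resulting coloring on $W \setminus \{w_0\}$ still fails the equation $c_2(w) = c_2(\tau_{w_0}(w))$; here the crucial point is that the images $\tau_{w_0}(w')$ for $w' \in W'$ lie outside $W$, so their colors are fixed by $c$ and were already ``wrong'' for $c_1$ unless $c_1$ restricted to $W\setminus\{w_0\}$ happened to be exactly the symmetric completion, which it is not.

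For the main case — $c_2$ permitting a translation $\gamma$ of order at least $3$ — I would invoke Lemma~\ref{cl:gammaorder}. First I must verify that $\gamma$ satisfies the three hypotheses of that lemma: $\gamma$ has order $\ge 3$ by assumption; the $\gamma$-orbits of $\sigma_1^{-1}(w_0)$, $w_0$, $\sigma_1(w_0)$ are distinct because by Observation~\ref{obs:2elts} applied to $\gamma$ (order $\ge 3$) each $\gamma$-orbit has at least two non-blank elements, and I would use Fact~\ref{prop:sigmatraversal}(F0) together with the fact that $c_2(\sigma_1(w_0)) = c_2(\sigma_1^{-1}(w_0)+\text{(nothing)})$ — actually $c_1(\sigma_1(w_0)) = \red \ne \blue = c_1(\sigma_1^{-1}(w_0))$ and these agree with $c_2$ there provided $\sigma_1^{\pm1}(w_0) \notin W'$, which holds by construction of $W'$ in cases 2 and 3, and in case 1 where $W' = \{\taur^{(1)}(w_0), \taub^{(1)}(w_0)\}$ one checks $\sigma_1^{\pm1}(w_0)$ could coincide with these only when $\min\{|\cO_0^\red|,|\cO_0^\blue|\}=1$, excluded since case~1 forces both infinite by Lemma~\ref{lma:notbothtaus}. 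The third hypothesis — $\gamma$ preserves $c_2$ on the $\gamma$-orbits of $\sigma_1^{-1}(w_0)$ and $\sigma_1(w_0)$ — follows because $\gamma$ preserves $c_2$ everywhere (we're assuming $c_2$ permits $\gamma$, meaning after suitably coloring $w_0$; one checks $w_0$ is not in the $\gamma$-orbit of $\sigma_1^{\pm1}(w_0)$ by the orbit-distinctness just established, so the relevant orbits are genuinely colored by $c_2$).

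Granting the hypotheses, Lemma~\ref{cl:gammaorder} forces $\gamma$ to have order exactly $3$, $W' = \{\taur^{(1)}(w_0), \taub^{(1)}(w_0)\}$ (hence we are in case~1, so $|\cO_0^\red| = |\cO_0^\blue| = \infty$), and $W' \subset S_{-1} \cup S_1$ where $S_j$ is the $\gamma$-orbit of $\sigma_1^j(w_0)$. Now I would derive a contradiction: since $|\gamma| = 3$ and $W' = \{\taur^{(1)}(w_0), \taub^{(1)}(w_0)\}$ with $\sigma_1(\taur^{(1)}(w_0)) = \taub^{(1)}(w_0)$, and since $\taur^{(1)}(w_0) \in S_{-1} \cup S_1$, I would examine the two sub-possibilities. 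If $\taur^{(1)}(w_0) \in S_1$, i.e. $\taur^{(1)}(w_0) = \gamma^a(\sigma_1(w_0))$ for some $a \in \{0,1,2\}$ — but conclusion~1 of Lemma~\ref{cl:gammaorder} says each pair $\{\gamma^i(\sigma_1(w_0)), \gamma^i(\sigma_1^{-1}(w_0))\}$ ($1 \le i \le 2$) meets $W'$, and $|W'| = 2$, so $W'$ consists of exactly one element from the $i=1$ pair and one from the $i=2$ pair; combined with $\sigma_1(\taur^{(1)}(w_0)) = \taub^{(1)}(w_0)$ this pins down the relative positions of $\taur^{(1)}(w_0), \taub^{(1)}(w_0)$ inside $S_{-1} \cup S_1$, and I would then compute the color pattern forced by $c_1$ (using F0–F3) on the orbit $\cO_0$ near $w_0$ and on $S_{\pm1}$, reaching a clash with the fact that $\cO_0$ is nonmonochromatic only at the expected spots while $\gamma$ demands a period-$3$ repetition of colors. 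The main obstacle — and where I'd spend the most care — is exactly this final bookkeeping: juggling the three interacting actions $\sigma_1$, $\gamma$, $\tau_{w_0}$ on the overlapping orbits $\cO_0$, $S_{-1}$, $S_1$ and confirming that the constraints from Fact~\ref{prop:sigmatraversal} applied to $c_1$ are incompatible with $\gamma$-periodicity of $c_2$, handling the possible coincidences among $\taur^{(1)}(w_0)$, $\sigma_1^{\pm1}(w_0)$, and their $\gamma$-translates.
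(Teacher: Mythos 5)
Your overall strategy matches the paper's: invoke Lemma~\ref{lem:c_2notforbA} to force any permitted translation to have order at least $3$, then feed it into Lemma~\ref{cl:gammaorder} to conclude $|\gamma|=3$ and $W'=\{\redreflect{1}(w_0),\bluereflect{1}(w_0)\}$. However, there are two genuine gaps. First, your argument that $c_2$ does not permit $\tau_{w_0}$ does not work as stated. You claim that flipping the colors on $W'$ ``only moves further away'' from the unique $\tau_{w_0}$-symmetric completion, and that the colors at $\tau_{w_0}(w')$ were ``already wrong for $c_1$ unless $c_1$ restricted to $W\setminus\{w_0\}$ were exactly the symmetric completion.'' That dichotomy is false: $c_1$ being non-forbidden only guarantees a disagreement with the symmetric completion at \emph{some} point of $W\setminus\{w_0\}$, and if that sole point of disagreement happened to lie in $W'$, flipping it would produce exactly the symmetric completion, so $c_2$ \emph{would} permit $\tau_{w_0}$. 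The correct argument is a one-liner from Fact~\ref{prop:sigmatraversal}(F0): $\tau_{w_0}$ swaps $\sigma_1(w_0)$ and $\sigma_1^{-1}(w_0)$, which are colored $\red$ and $\blue$ respectively under $c_1$, lie outside $W'$ (so retain those colors under $c_2$), and are not blanks under $c_2$; hence no extension of $c_2$ is preserved by $\tau_{w_0}$.

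Second, your endgame is not a proof. After obtaining $|\gamma|=3$ and $W'=\{\redreflect{1}(w_0),\bluereflect{1}(w_0)\}$ you describe a case analysis on the position of $\redreflect{1}(w_0)$ in $S_{-1}\cup S_1$ and say you would ``compute the color pattern \dots reaching a clash,'' explicitly flagging this bookkeeping as the main unresolved obstacle. The paper closes the argument differently and more cleanly: since $|\sigma_1|=\infty$ here (Lemma~\ref{lma:notbothtaus} forces $|\cO_0^{\red}|=|\cO_0^{\blue}|=\infty$ in this case), the $\sigma_1$-orbits $\cO_0$, $\gamma(\cO_0)$, $\gamma^2(\cO_0)$ are distinct; at most two $\sigma_1$-orbits are nonmonochromatic under $c_1$, so (replacing $\gamma$ by $\gamma^{-1}$ if needed) $\gamma(\cO_0)$ is monochromatic under $c_1$ and disjoint from $W'$, hence monochromatic under $c_2$; but $\gamma(\cO_0)$ contains $\gamma(\sigma_1(w_0))$ and $\gamma(\sigma_1^{-1}(w_0))$, which receive the colors $\red$ and $\blue$ under $c_2$ because $\gamma$ preserves $c_2$ on $S_1$ and $S_{-1}$ --- a contradiction. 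Without this (or an equivalent completed computation), the proof is incomplete.
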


\begin{proof}
The reflection about $w_0$ maps the red element $\sigma_1(w_0)$ to the blue element $\sigma_1^{-1}(w_0)$, and hence $c_2$ does not permit $\tau_{w_0}$.
Suppose $c_2$ permits a translation $\gamma$.
By Lemma~\ref{lem:c_2notforbA}, $\gamma$ has order at least 3.
Since $\gamma$ preserves $c_2$ and $\sigma_1(w_0), \sigma_1^{-1}(w_0) \notin W'$, the $\gamma$-orbits of $\sigma_1(w_0)$ and $\sigma_1^{-1}(w_0)$ are distinct, and hence also are distinct from the $\gamma$-orbit of $w_0$.
Therefore Lemma~\ref{cl:gammaorder} applies to $\gamma$, so  $\gamma$ has order exactly 3 and $W' = \{ \taur^{(1)}(w_0), \taub^{(1)}(w_0)\}$.

If $\gamma(\cO_0)\neq\cO_0$, then $\cO_0, \gamma(\cO_0), \gamma^2(\cO_0)$ are distinct.
Since there are at most two nonmonochromatic $\sigma_1$-orbits under $c_1$,  we assume  without loss of generality that $\gamma(\cO_0)$ is monochromatic under $c_1$ (since $c_2$ also permits $\gamma^{-1}$).
However, $W' \cap \gamma(\cO_0) = \varnothing$ and therefore either the $\gamma$-orbit of $\sigma_1(w_0)$ or the $\gamma$-orbit of $\sigma_1^{-1}(w_0)$ is not monochromatic under $c_2$, a contradiction.
\end{proof}

Let $\sigma_2$ be the rotation generated by $c_2$ (as in Fact~\ref{prop:sigmatraversal}).
We will demonstrate that $\sigma_2$ does not satisfy Fact~\ref{prop:sigmatraversal} under $c_2$.

\begin{lemma}\label{lma:sig2finite}
$|\rotation2| < \infty$.
\end{lemma}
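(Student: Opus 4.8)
\textbf{Proof plan for Lemma~\ref{lma:sig2finite}.}

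The plan is to argue by contradiction: suppose $|\sigma_2| = \infty$. The point of this lemma is to rule out the ``infinite'' traversals of Figure~\ref{fig:sigmatraversal} for $c_2$, so that in the remaining work we may assume the $\sigma_2$-orbit of $w_0$ is finite and analyze the resulting lattice/torus structures. Recall that $c_2$ differs from the chosen extension $c_1$ only on the small set $W'$, and that $c_1$ was selected to make $|\cO_0|$ as large as possible (C0), minimal under inclusion (C1), and then to balance the two color classes of $\cO_0$ (C2). The key leverage is that $\sigma_2$ must satisfy Fact~\ref{prop:sigmatraversal} relative to $c_2$, which severely constrains where $\sigma_2$ can map red elements to blue and blue to red.

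First I would establish that, since $c_2$ and $c_1$ agree off the finite set $W'$ and $|\sigma_2| = \infty$, the orbit $\cO_0'$ of $w_0$ under $\sigma_2$ is infinite, and almost all of its elements carry colors inherited from $c_1$. Using (F1)--(F3) for $\sigma_2$ under $c_2$, the ``color changes'' along this orbit are confined to the three special spots $\taur^{(2)}(w_0)\mapsto\taub^{(2)}(w_0)$, $\sigma_2^{-1}(w_0)\mapsto w_0$, and $w_0\mapsto\sigma_2(w_0)$; everywhere else $\sigma_2$ is color-preserving. Combining this with the fact that $c_1$ (hence $c_2$) is color-preserving under $\sigma_1$ away from its own three special spots, I would deduce that $\sigma_2$ is color-preserving on a cofinite subset of $X$, and then chase the colors to show that $c_2$ actually admits a color-preserving reflection or translation that we have already excluded — or, alternatively, that $\sigma_2$ (viewed as a translation of $X$) itself nearly preserves $c_1$, so that after recoloring $w_0$ appropriately one gets an extension of $c$ whose orbit of $w_0$ under the generated rotation strictly contains $\cO_0$ or is larger than $\cO_0$, contradicting (C0)/(C1). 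The precise bookkeeping here will split along the three infinite cases of Figure~\ref{fig:sigmatraversal} (Infinite Blue, Infinite Red, Doubly Infinite), using the earlier fact that $X = \R$ forces the infinite case while $X$ a cycle forces the finite case, so the genuinely new work is in the circle case $X = \R/\Z$ together with $X = \R$.

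I expect the main obstacle to be the Doubly Infinite case, where $\taur^{(2)}(w_0)$ and $\taub^{(2)}(w_0)$ lie off the orbit $\cO_0'$ of $w_0$ under $\sigma_2$: then $\cO_0'$ is monochromatic-ish (both half-lines one color) and one must instead track the two other $\sigma_2$-orbits containing $\taur^{(2)}(w_0)$ and $\taub^{(2)}(w_0)$, show these are also essentially monochromatic under $c_2$, and then combine all this structure against the maximality of $|\cO_0|$ for $c_1$. The delicate step is ensuring that the alternative extension of $c$ one constructs is \emph{non-forbidden} (so that Lemma~\ref{lma:nonforbidden} and the selection criteria genuinely apply to it); this uses Observation~\ref{obs:2elts} to find orbit representatives outside $W$ in the same way as in the proof of Lemma~\ref{lma:notbothtaus}. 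Once non-forbiddenness is in hand, comparing the size of the $w_0$-orbit of the new extension's generated rotation with $|\cO_0|$ yields the contradiction and completes the proof that $|\sigma_2| < \infty$.
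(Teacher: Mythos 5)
Your plan correctly identifies the setup (argue by contradiction from $|\sigma_2|=\infty$, apply Fact~\ref{prop:sigmatraversal} to both $\sigma_1$ and $\sigma_2$, and lean on the extremal criteria), but it stops exactly where the proof has to start: the ``color chase'' that is supposed to produce the contradiction is asserted rather than performed, and the two endpoints you offer for it are not both viable. In particular, your route via (C0)/(C1) --- constructing an extension whose $w_0$-orbit is ``larger than'' or ``strictly contains'' $\cO_0$ --- cannot close the argument in general: once $|\sigma_2|=\infty$, criterion (C0) already forces $|\sigma_1|=\infty$, so both orbits are infinite and ``larger'' is vacuous; and a proper containment $\cO_0\subsetneq\cO_{c'}$ requires $\sigma_1$ to be a power of the new rotation, which fails whenever the two translation lengths are incommensurable (the lattice situation of Figure~\ref{fig:lattice}, which is the generic case for $X=\R$ and for irrational rotations of $\R/\Z$). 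So the cases you flag as ``genuinely new work'' are precisely the ones your plan does not resolve.

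The paper's argument is organized differently and more concretely: it sets $S_j$ to be the $\sigma_2$-orbit of $\sigma_1^j(w_0)$ for $j\in\{-1,0,1\}$, shows $S_0\notin\{S_1,S_{-1}\}$, and splits on whether $S_1=S_{-1}$. When $S_1\neq S_{-1}$, it either uses $\min\{|\cO_0^{\red}|,|\cO_0^{\blue}|\}=\infty$ to find a monochromatic $\sigma_2$-translate of $\cO_0$ that must meet both colors, or invokes the technical Lemma~\ref{cl:gammaorder} to force $|\sigma_2|=3$, contradicting infinitude. When $S_1=S_{-1}$, it deduces $\sigma_2^i=\sigma_1^{-2}$ for some $i>0$ and derives $\blue=\red$ by comparing $c_2(\sigma_2^{-ik}(w_0))$ with $c_1(\sigma_1^{2k}(w_0))$ for a suitable $k$ with $\sigma_2^{-ik}(w_0)\notin W'$. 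None of these three closing moves appears in your plan (you never mention Lemma~\ref{cl:gammaorder}), and your proposed case split by the traversal type of Figure~\ref{fig:sigmatraversal} does not by itself supply them. To repair the proposal you would need to actually carry out one of these contradictions in each of your cases.
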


\begin{proof}
Suppose $|\rotation2| = \infty$. 
By (C0), we also have that $|\sigma_1| = \infty$.
For $i \in \Z$, let $S_i$ be the $\sigma_2$-orbit of $\sigma_1^i(w_0)$.
For any $w \in W'$, $c_2(w) \neq c_2(\sigma_1(w))$ and hence $\sigma_2 \neq \sigma_1$.
Since $\sigma_1^{-1}(w_0) \notin W'$, we have that $\sigma_2 \neq \sigma_1^{-1}$.
Therefore, $S_0 \notin \{ S_1, S_{-1}\}$.

Suppose that $S_1 \neq S_{-1}$ and hence at least one of $S_1$ or $S_{-1}$ is monochromatic under $c_2$.
If $\min\{ |\cO_0^{\red}|, |\cO_0^{\blue}| \}= \infty$, then $|S_1 \cap \cO_0| = |S_{-1} \cap \cO_0| = 1$ and hence for some $\ell \geq 1$, $\sigma_2^\ell(\cO_0)$ is monochromatic under $c_2$.
However, there are now an infinite number of integers $k > 0$ such that $c_2(\sigma_1^{k}(w_0)) = \red$ and $c_2(\sigma_1^{-k}(w_0)) = \blue$, so $c_2(\sigma_2^{\ell}(\sigma_1^k(w_0))) \neq c_2(\sigma_2^{\ell}(\sigma_1^{-k}(w_0)))$, a contradiction.
Thus $\min\{ |\cO_0^{\red}|, |\cO_0^{\blue}| \} < \infty$ and therefore $S_0$ is the only nonmonochromatic $\sigma_2$-orbit under $c_2$.
This implies that $S_1$ and $S_{-1}$ are monochromatic under $c_2$, so Lemma~\ref{cl:gammaorder} applies and $|\sigma_2| = 3$.

Thus $S_1 = S_{-1}$. 
Since $\sigma_1(w_0), \sigma_1^{-1}(w_0) \in S_1$, then $S_1$ is not monochromatic and thus there are two nonmonochromatic $\sigma_2$-orbits under $c_2$.
Hence there are an infinite number of red elements and an infinite number of blue elements in both $S_0$ and $S_1$ under $c_2$.
Thus by (C0)--(C2), $|\cO_0^{\red}| = |\cO_0^{\blue}|=\infty$.
Let $i \in \Z$ be such that $\sigma_2^i(\sigma_1(w_0)) = \sigma_1^{-1}(w_0)$; note $\sigma_2^i = \sigma_1^{-2}$ and $i > 0$ by Fact~\ref{prop:sigmatraversal}.
Also by Fact~\ref{prop:sigmatraversal}, $c_2(\sigma_2^{-\ell}(w_0))=\blue$ for all $\ell \geq 1$.
However, there exists an integer $k > 0$ such that $\sigma_2^{-ik}(w_0) \notin W'$ and hence for $\ell = ik$ we have
\[
	\blue = c_2(\sigma_2^{-\ell}(w_0)) = c_2(\sigma_2^{-ik}(w_0)) = c_1(\sigma_2^{-ik}(w_0)) = c_1(\sigma_1^{2k}(w_0)) = \red,
\]
a contradiction.
\end{proof}

Theorem~\ref{thm:main1} now follows directly from Lemma~\ref{lma:sig2finite}.

\vspace{0.5em}
\noindent\textbf{Theorem~\ref{thm:main1}.} $\extD(\R) = 4$.

\begin{proof}[Proof of Theorem~\ref{thm:main1}.]
Let $W \subset \R$ be a set of size four, and let $w_0 = \min W$.
Thus $\tau_{w_0}$ sends all elements of $W\setminus\{w_0\}$ to elements not in $W$.
Let $c : \R \setminus W \to \{\red,\blue\}$ be a precoloring of $\R \setminus W$, and let $c_1$ and $c_2$ be the extensions of $c$ defined by the selection criteria (C0)--(C2) and the definition of $W'$.
The coloring $c_1$ is not forbidden by choice, and $c_2$ is not forbidden by Lemma~\ref{lem:c_2notforb}.
Lemma~\ref{lma:sig2finite} implies that $\sigma_2$, the translation generated by $c_2$, satisfies $|\sigma_2|<\infty$.
However, all translations of $\R$ are of infinite order, a contradiction.
Therefore there is a distinguishing extension of $c$.
%
%To prove that $\extD(\R)>3$, let $W=\{-1,0,1\}$ and let $c:\R\setminus W\to\{\red,\blue\}$ be defined by $c(x)=\red$.
%Each extension of $c$ preserves a reflection about an element in $\{-1, -\frac{1}{2}, 0, \frac{1}{2}, 1\}$.
\end{proof}

For the remainder of the paper, we assume that $X\neq \R$.
First we will show that if $|\sigma_1|=\infty$, then there is a distinguishing extension of $c$ to $X$.

%
%By Lemma~\ref{lma:sig2finite}%, we have control of what happens when both $|\rotation{1}|$ and $|\rotation{2}|$ are infinite. 
%It is not difficult to control what happens when $|\rotation{1}|$ is infinite and $|\rotation{2}|$ is finite, but the case of both $|\rotation{1}|$ and $|\rotation{2}|$ being finite requires many careful details, and some additional hypotheses.

%Observe that the above hypothesis holds immediately for a cycle $C_n$ where the minimum prime divisor of $n$ is at least $11$.

\begin{lemma}\label{lma:infinitefinite}
 $|\rotation1| < \infty$.
\end{lemma}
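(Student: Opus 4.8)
\textbf{Proof proposal for Lemma~\ref{lma:infinitefinite}.}

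The plan is to argue by contradiction, assuming $|\sigma_1| = \infty$, and to use the same machinery that drove Lemma~\ref{lma:sig2finite}: namely, to push $\sigma_2$ (the rotation generated by $c_2$) into a configuration that violates Fact~\ref{prop:sigmatraversal}. Since $X \neq \R$, the rotation $\sigma_2$ lives in $\R/\Z$, so $|\sigma_2|$ may be finite or infinite; Lemma~\ref{lma:sig2finite} already tells us that $|\sigma_2| < \infty$. So the working hypothesis is: $|\sigma_1| = \infty$ but $|\sigma_2| = m < \infty$. The goal is to derive a contradiction from the interaction of an infinite-order $\sigma_1$ and a finite-order $\sigma_2$, both respecting Fact~\ref{prop:sigmatraversal} for their respective colorings $c_1, c_2$, which differ only on the small set $W' \subseteq W \setminus \{w_0\}$ (with $|W'| \le 2$).

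First I would set up the orbit structure: for $i \in \Z$, let $S_i$ be the $\sigma_2$-orbit of $\sigma_1^i(w_0)$; since $|\sigma_2| = m$, each $S_i$ is finite of size dividing $m$. As in Lemma~\ref{lma:sig2finite}, since $c_2(w) \neq c_2(\sigma_1(w))$ for $w \in W'$ we get $\sigma_2 \neq \sigma_1^{\pm 1}$, so $S_0 \notin \{S_1, S_{-1}\}$. Now Fact~\ref{prop:sigmatraversal}(F0) applied to $c_2$ says $c_2(\sigma_2(w_0)) = \red$ and $c_2(\sigma_2^{-1}(w_0)) = \blue$, and (F3) forces $c_2$ to be constant along $S_0$ away from the three exceptional points $\{\taur^{(2)}(w_0), \sigma_2^{-1}(w_0), w_0\}$ — but that is impossible on a \emph{finite} orbit unless the exceptional points "wrap around" consistently, which is exactly the content of the Finite Case of Figure~\ref{fig:sigmatraversal}: $S_0$ must have the form of a block of reds followed by a block of blues with $\taur^{(2)}(w_0)$ and $\taub^{(2)}(w_0)$ at the two color boundaries. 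The key structural consequence is that $S_0$ has exactly $|\cO_{0,2}^{\red}|$ red elements and $|\cO_{0,2}^{\blue}|$ blue elements under $c_2$ where these are the counts for the $\sigma_2$-orbit. I would then split on whether $S_1 = S_{-1}$ or not, and on whether the $\sigma_1$-orbit $\cO_0$ is "balanced" or not, mirroring the case analysis in Lemma~\ref{lma:sig2finite}. In the case $S_1 \neq S_{-1}$, at least one of them is monochromatic under $c_2$ (since $S_0$ accounts for at most two nonmonochromatic $\sigma_2$-orbits and $W'$ touches few orbits), which lets Lemma~\ref{cl:gammaorder} apply to $\gamma = \sigma_2$, forcing $|\sigma_2| = 3$ and $W' = \{\taur^{(1)}(w_0), \taub^{(1)}(w_0)\}$; then I would track colors of $\sigma_1^{2k}(w_0)$ for large $k$ (which, since $\sigma_2$ has order $3$, equals $\sigma_2^{\pm k}$ of something up to $W'$) and reach the same $\red = c_1 = \ldots = c_1 = \blue$ contradiction as at the end of Lemma~\ref{lma:sig2finite}. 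In the case $S_1 = S_{-1}$, the orbit $S_1$ contains both the red $\sigma_1(w_0)$ and the blue $\sigma_1^{-1}(w_0)$ so it is nonmonochromatic, giving two nonmonochromatic $\sigma_2$-orbits, forcing $|\cO_0^\red| = |\cO_0^\blue| = \infty$ by (C0)–(C2), and then one exhibits an integer $k$ with $\sigma_2^{-ik}(w_0) \notin W'$ (possible since $W'$ is finite and $|\sigma_1| = \infty$) to get $\blue = c_2(\sigma_2^{-ik}(w_0)) = c_1(\sigma_1^{2k}(w_0)) = \red$.

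The subtlety I'd need to be careful about — and what I expect to be the main obstacle — is that with $|\sigma_1| = \infty$ but $|\sigma_2| < \infty$, the two rotations generate a "cylindrical" rather than "toroidal" lattice (Figure~\ref{fig:cylinder}), and one must verify that the relation $\sigma_2^i = \sigma_1^j$ that pins down $S_1 = S_{-1}$ forces $j = -2$ with $i$ a specific residue, so that the composition $\sigma_1^{2k}$ can genuinely be rewritten as a power of $\sigma_2$ up to a bounded error living in $W'$. Equivalently, the main work is showing that when $|\sigma_1| = \infty$, condition (C0) already forced $\sigma_1$ to be "as long as possible," so $\sigma_2$ having finite order means infinitely many indices $k$ with $\sigma_1^{\pm k}(w_0)$ outside $W'$ and colored $\red$ resp.\ $\blue$, while $\sigma_2$-periodicity of $c_2$ (away from $\cO_0$ and one other orbit) forces the colors to repeat — an irreconcilable conflict. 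I would structure the proof so that all four subcases funnel into an application of Lemma~\ref{cl:gammaorder} (to get $|\sigma_2| = 3$, $W' = \{\taur^{(1)}(w_0),\taub^{(1)}(w_0)\}$) or directly into the "$\blue = \red$" equation, exactly paralleling Lemma~\ref{lma:sig2finite} but with the roles of "finite" and "infinite" interchanged between $\sigma_1$ and $\sigma_2$.
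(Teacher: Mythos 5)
Your setup and the first two-thirds of your argument match the paper's: assume $|\sigma_1|=\infty$, invoke Lemma~\ref{lma:sig2finite} to get $|\sigma_2|<\infty$, and show the three $\sigma_2$-orbits $S_{-1},S_0,S_1$ of $\sigma_1^{-1}(w_0)$, $w_0$, $\sigma_1(w_0)$ are distinct. In fact you can say something stronger that dissolves your case split: since $\sigma_1$ has infinite order and $\sigma_2$ finite order, $\sigma_1^i=\sigma_2^j$ forces $i=0$, so $S_1=S_{-1}$ (which would require $\sigma_2^j=\sigma_1^{-2}$) is simply impossible, and the ``cylindrical lattice'' worry you flag at the end resolves itself---there is no relation $\sigma_2^i=\sigma_1^j$ with $j\neq 0$ to pin down. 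Your treatment of the $S_1=S_{-1}$ case is therefore vacuous (and as written it would not work anyway: a finite $\sigma_2$-orbit cannot contain infinitely many elements of each color). With all three orbits distinct and $S_0$ the unique nonmonochromatic $\sigma_2$-orbit (the Finite Case of Fact~\ref{prop:sigmatraversal}), Lemma~\ref{cl:gammaorder} applies---note its hypothesis (3) needs \emph{both} $S_1$ and $S_{-1}$ monochromatic, not just one of them, though both indeed are---and yields $|\sigma_2|=3$ and $W'=\{\taur^{(1)}(w_0),\taub^{(1)}(w_0)\}$, exactly as in the paper.

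The genuine gap is the final contradiction. You propose to track the colors of $\sigma_1^{2k}(w_0)$, ``which, since $\sigma_2$ has order $3$, equals $\sigma_2^{\pm k}$ of something up to $W'$,'' imitating the end of Lemma~\ref{lma:sig2finite}. But that computation rests on the identity $\sigma_2^i=\sigma_1^{-2}$, which cannot hold here: no nonzero power of the order-$3$ rotation $\sigma_2$ equals a nonzero power of the infinite-order rotation $\sigma_1$, so $\sigma_1^{2k}(w_0)$ never lies in the $\sigma_2$-orbit of $w_0$ and the chain of equalities you want does not close. The paper closes the argument by a different bridge: the $\sigma_1$-orbits $\cO_0,\sigma_2(\cO_0),\sigma_2^2(\cO_0)$ are distinct, and $W'$ lies in a single $\sigma_1$-orbit (since $\sigma_1(\taur^{(1)}(w_0))=\taub^{(1)}(w_0)$), so some $\cO_i=\sigma_2^i(\cO_0)$ with $i\in\{1,2\}$ is disjoint from $W'$ and hence monochromatic under both $c_1$ and $c_2$. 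Then $\blue = c_2(\sigma_1^{-1}(w_0)) = c_2(\sigma_2^i(\sigma_1^{-1}(w_0))) = c_2(\sigma_2^i(\sigma_1(w_0))) = c_2(\sigma_1(w_0)) = \red$, where the outer equalities use that $S_{-1}$ and $S_1$ are monochromatic under $c_2$ and the middle one uses that both points lie in the monochromatic $\sigma_1$-orbit $\cO_i$. Some such link between the blue orbit $S_{-1}$ and the red orbit $S_1$ is needed, and your proposal does not contain one that works.
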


\begin{proof}
Suppose otherwise that $|\rotation1| = \infty$.
Observe that $\rotation{1}^i = \rotation{2}^j$ if and only if $i = 0$ and $|\rotation{2}|$ divides $j$.
Thus, the $\sigma_2$-orbits of $\sigma_1^{-1}(w_0)$, $w_0$, and $\sigma_1(w_0)$ are distinct.
Since $|\sigma_2| < \infty$, there is exactly one nonmonochromatic $\sigma_2$-orbit under $c_2$ and hence Lemma~\ref{cl:gammaorder} applies, $|\sigma_2| = 3$, and $W' = \{\taur^{(1)}(w_0), \taub^{(1)}(w_0)\}$.
The orbits $\cO_0, \cO_1, \cO_2$ are distinct $\rotation1$-orbits, and hence $W'$ is disjoint from $\cO_i$ for some $i \in \{1,2\}$.
Thus $\cO_i$ is monochromatic under both $c_1$ and $c_2$, but 
\[
\blue = c_2(\sigma_1^{-1}(w_0)) = c_2(\sigma_2^i(\sigma_1^{-1}(w_0))) = c_2(\sigma_2^i(\sigma_1(w_0))) = c_2(\sigma_1(w_0)) = \red,
\]
a contradiction.
\end{proof}

We now have that $|\sigma_2| \leq |\sigma_1| < \infty$.
Observe that each $\rotation{i}$ is a rotation about the circle $x \mapsto x + \frac{j}{|\rotation{i}|}$ for some $j$ relatively prime to $|\rotation{i}|$. 
Therefore, $|\rotation1| = |\rotation2|$ if and only if $\rotation1$ generates $\rotation2$ and vice-versa.
%Let $m$ be the minimum positive integer such that $\rotation{2}^m(w_0) \in \cO_0$.
%If $m> 1$, then the actions of $\rotation{1}$ and $\rotation{2}$ on $w_0$ define a toroidal lattice of ``width'' $m$ as in Figure~\ref{fig:torus}.
We want to show that $\rotation{2}$ is generated by $\rotation{1}$.

\begin{lemma}\label{lma:BAM}
$|\sigma_1| = |\sigma_2|$ and hence $\sigma_1$ and $\sigma_2$ generate each other.
\end{lemma}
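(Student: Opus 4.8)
The goal is to rule out the possibility $|\sigma_1| > |\sigma_2|$, which (given Lemma~\ref{lma:infinitefinite} and the observation that finite rotations of equal order generate each other) is equivalent to showing $|\sigma_1| = |\sigma_2|$. So assume for contradiction that $|\sigma_2| < |\sigma_1| < \infty$. The two rotations then live on a torus (Figure~\ref{fig:torus}), and I want to exploit the color-preserving properties of $\sigma_1$ (Fact~\ref{prop:sigmatraversal} applied to $c_1$) against those of $\sigma_2$ (Fact~\ref{prop:sigmatraversal} applied to $c_2$). The key structural fact is that $c_1$ and $c_2$ differ only on $W' \subseteq W\setminus\{w_0\}$ (together with the choice of color on $w_0$), and $|W'| \le 2$.

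First I would set up the orbit bookkeeping: let $\cO_0$ be the $\sigma_1$-orbit of $w_0$, which by (C0) has maximum size; under $c_1$ it has at most one ``bad'' transition going red-to-blue (at $\taur^{(1)}(w_0)\mapsto\taub^{(1)}(w_0)$) and the blue-to-red transition only across $w_0$. Since $|\sigma_2| < |\sigma_1|$, the $\sigma_2$-orbit of $w_0$ is strictly smaller than $\cO_0$; I would compare $\sigma_2^{|\sigma_2|}(w_0) = w_0$, walking around the $\sigma_2$-orbit and tracking colors via Fact~\ref{prop:sigmatraversal}(F0)--(F3) for $c_2$: $c_2(\sigma_2(w_0)) = \red$, $c_2(\sigma_2^{-1}(w_0)) = \blue$, and $\sigma_2$ is almost color-preserving. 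The point is that $\sigma_2$ has order $\geq 3$ (it is nontrivial, and order exactly $2$ is forbidden by Lemma~\ref{lem:c_2notforbA}), so Lemma~\ref{cl:gammaorder}/\ref{lma:boundarypts} is the natural hammer: if I can verify its three hypotheses for $\gamma = \sigma_2$ — namely $|\sigma_2|\ge 3$, the $\sigma_2$-orbits of $\sigma_1^{-1}(w_0), w_0, \sigma_1(w_0)$ are distinct, and $\sigma_2$ preserves $c_2$ on the outer two orbits — then conclusion~3 of that lemma gives $|\sigma_2| = 3$ and conclusion~5 gives $W' = \{\taur^{(1)}(w_0),\taub^{(1)}(w_0)\}$, which forces $|\cO_0^\red| = |\cO_0^\blue| = \infty$ by Lemma~\ref{lma:notbothtaus} — contradicting $|\sigma_1| < \infty$.

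The main obstacle is verifying hypothesis~2 of Lemma~\ref{cl:gammaorder}, that the $\sigma_2$-orbits of $\sigma_1^{-1}(w_0)$, $w_0$, $\sigma_1(w_0)$ are pairwise distinct. Since $\sigma_1(w_0), \sigma_1^{-1}(w_0)\notin W'$ (they are $\red$ and $\blue$ under both $c_1$ and $c_2$, so $c_2$ cannot permit a reflection or the order-$2$ translation fixing relations among them), and $\sigma_2$ preserves $c_2$, the orbit of $\sigma_1(w_0)$ is monochromatic red and the orbit of $\sigma_1^{-1}(w_0)$ is monochromatic blue away from the single bad pair, so these two orbits are distinct from each other and from $S_0$ — this also delivers hypothesis~3 for free. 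The remaining case to handle carefully is when $|\sigma_1|$ is finite but the ``torus'' degenerates, e.g.\ when $\min\{|\cO_0^\red|,|\cO_0^\blue|\}$ is small; here I would invoke Lemma~\ref{lma:notbothtaus} and Observation~\ref{obs:notbothsigmas} exactly as in the well-definedness argument for $W'$ to guarantee that the elements of $W'$ avoid the critical positions, so that the orbit-distinctness and color-preservation hypotheses still hold. Once Lemma~\ref{cl:gammaorder} fires, the contradiction $\blue = c_2(\text{something}) = c_1(\text{same thing}) = \red$ along a $\sigma_2$-orbit disjoint from $W'$ closes the proof, exactly in the style of Lemmas~\ref{lma:sig2finite} and~\ref{lma:infinitefinite}.
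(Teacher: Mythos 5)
Your overall strategy is exactly the paper's: assume $|\sigma_2| < |\sigma_1| < \infty$, apply Lemma~\ref{cl:gammaorder} to $\gamma = \sigma_2$, and derive a contradiction from its conclusion~5 together with Lemma~\ref{lma:notbothtaus} and the finiteness of $\cO_0$. However, your verification of hypothesis~2 of Lemma~\ref{cl:gammaorder} --- that the $\sigma_2$-orbits $S_{-1}$, $S_0$, $S_1$ of $\sigma_1^{-1}(w_0)$, $w_0$, $\sigma_1(w_0)$ are pairwise distinct --- is circular, and you yourself flag this as ``the main obstacle.'' You argue that $S_1$ is monochromatic red and $S_{-1}$ is monochromatic blue and deduce distinctness from that; but monochromaticity of $S_{\pm 1}$ is available only \emph{after} you know $S_{\pm 1} \neq S_0$, since $S_0$ is the unique nonmonochromatic $\sigma_2$-orbit under $c_2$ (the orbit of $\redreflect{2}(w_0)$ necessarily coincides with $S_0$ when $|\sigma_2|<\infty$, because a finite orbit cannot carry exactly one red-to-blue transition and no blue-to-red transition). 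Your color argument does show that $S_1 = S_{-1}$ would force both to equal $S_0$, but it cannot by itself exclude $S_1 = S_0$ or $S_{-1} = S_0$. Relatedly, the phrase ``$\sigma_2$ preserves $c_2$'' is not literally true; $\sigma_2$ only satisfies (F1)--(F3).

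The missing ingredient is the cardinality argument the paper uses: if $\sigma_1(w_0) \in S_0$, then $\sigma_1(w_0) = \sigma_2^k(w_0)$ for some $k$, and since a rotation of the circle is determined by the image of a single point, $\sigma_1 = \sigma_2^k$; hence $\cO_0 \subseteq S_0$ and $|\sigma_1| = |\cO_0| \leq |S_0| = |\sigma_2|$, contradicting $|\sigma_2| < |\sigma_1|$. The same applies to $\sigma_1^{-1}(w_0)$. Once $S_0 \notin \{S_1, S_{-1}\}$ is established this way, your monochromaticity argument correctly gives $S_1 \neq S_{-1}$ and hypothesis~3 for free, and the rest of your plan goes through. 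Two smaller points: the reason $|\sigma_2| \geq 3$ is not Lemma~\ref{lem:c_2notforbA} (which concerns translations \emph{permitted} by $c_2$, whereas $\sigma_2$ is merely generated by $c_2$ and is not color-preserving) but rather (F0), since $\sigma_2(w_0)$ and $\sigma_2^{-1}(w_0)$ receive different colors and so are distinct; and your two proposed endgames (the $W'=\{\redreflect{1}(w_0),\bluereflect{1}(w_0)\}$ contradiction versus a color contradiction along an orbit disjoint from $W'$) should be consolidated --- the first, which is the paper's, already suffices.
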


\begin{proof}
First note that by the extremal choice (C0), $|\sigma_2| = |\sigma_1|$ if and only if $\sigma_2$ generates $\sigma_1$.
Now suppose that $|\sigma_2| < |\sigma_1|$.
Let $S_j$ be the $\sigma_2$-orbit of $\sigma_1^j(w_0)$.
Since $|\cO_0| = |\sigma_1| >  |\sigma_2| = |S_0|$, the $\sigma_2$-orbits $S_0$ and $S_1$ are distinct; similarly $S_0$ and $S_{-1}$ are distinct.
Since $|\sigma_2|$ is finite, $S_0$ is the only nonmonochromatic $\sigma_2$-orbit under $c_2$.
Therefore, $S_1$ and $S_{-1}$ are monochromatic under $c_2$ and $S_{-1} \neq S_1$ since $c_2(\sigma_1^{-1}(w_0))\neq c_2(\sigma_1(w_0))$.
However, since $|\cO_0^{\red}|, |\cO_0^{\blue}| < \infty$, Lemmas~\ref{lma:notbothtaus} and~\ref{cl:gammaorder} imply that $S_{-1} = S_1$, a contradiction.
\end{proof}

We have now verified that $\rotation{1}$ and $\rotation{2}$ are both finite and generate each other.
Observe that this is a trivial statement in the case that $X$ is a cycle of prime order.
We finish the proof by showing that $\rotation{1}$ and $\rotation{2}$ violate either Fact~\ref{prop:sigmatraversal} or the extremal choices.

For all remaining cases, we will assume that $|\cO_0^{\red}|\leq |\cO_0^{\blue}|$.
The other case is symmetric by swapping colors and possibly negating the exponents on $\rotation{1}$.
The following lemma completes the proof of Theorem~\ref{thm:divisibility}.

\begin{lemma}\label{thm:finishit}
If $c_1$ and $c_2$ are selected by the extremal choices (C0)--(C2) and the definition of $W'$, then there exists a distinguishing coloring of $X$.
\end{lemma}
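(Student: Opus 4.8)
The plan is to exploit the structure established in Lemmas~\ref{lma:infinitefinite} and~\ref{lma:BAM}: both $\rotation1$ and $\rotation2$ are finite rotations of $X$ that generate the same cyclic group, and by Fact~\ref{prop:sigmatraversal} each $\rotation i$ is ``nearly color-preserving'' for its coloring $c_i$, with the $\sigma_i$-orbit $\cO_0$ of $w_0$ being essentially the only obstruction. First I would set $d = |\sigma_1| = |\sigma_2|$ and write $\sigma_2 = \sigma_1^{t}$ for some $t$ relatively prime to $d$; the key observation is that $c_1$ and $c_2$ differ only on $W' \subseteq W \setminus \{w_0\}$, which has at most two elements. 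Since $\cO_0$ is a single $\sigma_1$-orbit of size $d$ and, under $c_2$, it is broken into $\sigma_2$-orbits, I would track precisely which elements of $\cO_0$ change color and reconcile the (F0)--(F3) constraints for $\sigma_2$ against those for $\sigma_1$.

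The heart of the argument is a counting/parity contradiction coming from condition (C2), together with Fact~\ref{prop:sigmatraversal}(F1)--(F2). Under $c_1$, reading around the orbit $\cO_0$ as the cyclic sequence $w_0, \sigma_1(w_0), \dots, \sigma_1^{d-1}(w_0)$, property (F1)--(F2) says that (ignoring the three special positions $\sigma_1^{-1}(w_0), w_0, \sigma_1(w_0)$) the colors go red-then-blue at most at one ``descent'' and never blue-to-red otherwise; concretely $\cO_0$ under $c_1$ looks like a block of reds followed by a block of blues in cyclic order, with the blanks-turned-colors at the seam. The same rigidity must hold for $\rotation2$ on its orbit of $w_0$. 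I would then argue that since $|\cO_0^{\red}| \le |\cO_0^{\blue}|$ and $\sigma_2 = \sigma_1^t$, forcing $c_2$ to also have this block structure around the $\sigma_2$-traversal is only possible if $t \equiv \pm 1 \pmod d$ — using the Divisibility Condition to kill small values of $d$ where degenerate coincidences could occur (this is where $d \in \{2,3,4,5\}$ gets excluded, since then $W + j/d$ would meet $W$). When $t \equiv 1$, $\sigma_2 = \sigma_1$, contradicting that $c_2(w) \ne c_2(\sigma_1(w))$ for $w \in W'$ (as used in Lemma~\ref{lma:sig2finite}); when $t \equiv -1$, $\sigma_2 = \sigma_1^{-1}$, which is excluded because $\sigma_1^{-1}(w_0) \notin W'$ so $c_2$ still preserves colors along $\sigma_1^{-1}$ past $\sigma_1^{-1}(w_0)$, forcing $\sigma_1^{-1}(w_0)$'s color to propagate incorrectly. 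The remaining possibility is that $\cO_0$ is monochromatic under $c_2$ except at $w_0$'s two neighbors, i.e. $\min\{|\cO_0^{\red}|, |\cO_0^{\blue}|\}$ is tiny; here I would invoke Lemmas~\ref{lma:notbothtaus} and~\ref{cl:gammaorder} exactly as in Lemma~\ref{lma:BAM}'s proof to force $W' = \{\taur^{(1)}(w_0), \taub^{(1)}(w_0)\}$ and $|\sigma_2| = 3$, then check the small case $d = 3$ by hand against the Divisibility Condition, which forbids $d = 3$ outright for the relevant $W$.

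Finally I would assemble these pieces: in every case the hypothesized $\sigma_2$ either equals $\sigma_1^{\pm 1}$ (contradicting the defining color-mismatch on $W'$ from the construction of $c_2$), or has order $3$ (contradicting the Divisibility Condition via $(W + j/3)\cap W = \varnothing$ applied to the orbit $\cO_0$, whose size would be $3 \le 4$), or it fails one of (F1)--(F3) because the at-most-two color changes introduced by passing from $c_1$ to $c_2$ cannot simultaneously repair the seam for $\sigma_2$'s traversal while respecting the extremal maximality (C0) of $|\cO_0| = |\sigma_1|$ and the balance condition (C2). Since $c_1$ and $c_2$ are both non-forbidden (the latter by Lemma~\ref{lem:c_2notforb}) yet neither extends to a distinguishing coloring, every extension of each must permit a color-preserving reflection and hence generate such a $\sigma_i$; the contradiction shows no such non-extendable precoloring $c$ exists, so $c$ extends to a distinguishing $2$-coloring of $X$, proving Theorem~\ref{thm:divisibility}.

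I expect the main obstacle to be the bookkeeping in the ``block structure'' step: precisely controlling, for each of the four traversal types in Figure~\ref{fig:sigmatraversal} and each shape of $W'$ (size one vs.\ size two, and which of $\sigma_1(w_0), \sigma_1^{-1}(w_0), \taur^{(1)}(w_0), \taub^{(1)}(w_0)$ coincide), how the two-element perturbation $c_1 \rightsquigarrow c_2$ affects the nonmonochromatic $\sigma_2$-orbits, and verifying that every resolution collapses to $\sigma_2 \in \{\sigma_1, \sigma_1^{-1}\}$ or $|\sigma_2| = 3$. The extremal choice (C1) — that $\cO_0$ is not properly contained in the $w_0$-orbit of any other non-forbidden extension — will likely be needed to rule out a subtle case where $\sigma_2 = \sigma_1^t$ with $1 < |t| < d-1$ and the $\sigma_2$-orbit of $w_0$ is strictly larger, which would otherwise violate (C0) but requires care to exclude cleanly.
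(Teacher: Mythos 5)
Your setup matches the paper's: by Lemmas~\ref{lma:infinitefinite} and~\ref{lma:BAM} both rotations are finite and generate each other, $W'=\{w'\}$ is a singleton by Lemma~\ref{lma:notbothtaus} (since both color classes of $\cO_0$ are finite), and the Divisibility Condition applied to $w_0,w'\in\cO_0\cap W$ gives $|\cO_0|\geq 6$. But the heart of your argument --- that reconciling the ``block structure'' of Fact~\ref{prop:sigmatraversal} for $\sigma_1$ and for $\sigma_2=\sigma_1^{t}$ forces $t\equiv\pm1\pmod{d}$ --- is asserted rather than proved, and it is false as stated. For example, with $d=7$ and $c_1$ giving $\cO_0$ the pattern (blank)$,\red,\blue,\blue,\blue,\blue,\blue$ in $\sigma_1$-order, recoloring $w'=\sigma_1^4(w_0)$ red produces a coloring $c_2$ whose $\sigma_2=\sigma_1^{4}$-traversal reads (blank)$,\red,\red,\blue,\blue,\blue,\blue$ --- a perfectly legal block structure with $t=4$. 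This configuration is killed only by the extremal choice (C2) (the new coloring has $\min\{|\cO_0^{\red}|,|\cO_0^{\blue}|\}=2>1$), which is exactly the mechanism the paper uses in its third case: when $|\cO_0^{\red}|<|\ell|\leq\lfloor|\sigma_1|/2\rfloor$, every red element of $\cO_0$ has a blue $\sigma_2$-preimage under $c_1$, forcing $|\cO_0^{\red}|=1$ and then two reds in $\cO_0$ under $c_2$, contradicting (C2). Your proposal mentions (C2) only in passing and does not supply this counting step. For the cases $|\ell|\leq|\cO_0^{\red}|$ the paper does not argue via ``blocks'' either; it observes that $\sigma_2$ maps the set $\{\sigma_1^{1-\ell}(w_0),\sigma_1^{-1}(w_0)\}$ onto $\{\sigma_1(w_0),\sigma_1^{\ell-1}(w_0)\}$ and, since at most one of these four points is $w'$, some blue element is sent to a red element, violating (F2).

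A second concrete problem is your fallback for the ``remaining possibility'': you propose to invoke Lemma~\ref{cl:gammaorder} to force $W'=\{\taur^{(1)}(w_0),\taub^{(1)}(w_0)\}$ and $|\sigma_2|=3$. That lemma requires the $\gamma$-orbits of $\sigma_1^{-1}(w_0)$, $w_0$, and $\sigma_1(w_0)$ to be \emph{distinct}, which fails precisely in the present situation where $\sigma_2$ generates the same cyclic group as $\sigma_1$ (all three points lie in the single orbit $\cO_0$); moreover the conclusion $|W'|=2$ contradicts the fact, already forced here by Lemma~\ref{lma:notbothtaus}, that $W'$ is a singleton. So the two tools you lean on for the hard cases --- the block-rigidity claim and Lemma~\ref{cl:gammaorder} --- respectively fail and do not apply, and the proposal as written does not close the argument.
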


\begin{proof}
By Lemma~\ref{lma:BAM}, $\rotation{1}$ and $\rotation{2}$ generate each other.
Since $|\cO_0^{\red}|, |\cO_0^{\blue}| < \infty$, Lemma~\ref{lma:notbothtaus} implies that $W' = \{w'\}$.
This further implies that $w' \in \cO_0$, and since $w_0\in \cO_0 \setminus \{w'\}$, the Divisibility Condition implies that $|\cO_0| \geq 6$.
Let $\ell$ be the integer minimizing $|\ell|$ such that $\rotation{2} = \rotation{1}^\ell$.
Recall that $\sigma_1^{-1}(w_0),\sigma_1(w_0)\notin W'$ and thus $c_2(\sigma_1^{-1}(w_0)) = \blue$ and $c_2(\sigma_1(w_0)) = \red$.
%We call $\ell$ the \emph{step size} for $\rotation{2}$ over $\rotation{1}$.

Suppose first that $\ell\in\{1,\ldots,|\cO_0^{\red}|\}$.
Thus $c_1(\sigma_1^{\ell-1}(w_0))=\red$ and $c_1(\sigma_1^{1-\ell}(w_0)) = \blue$.
Since $\sigma_2(\{ \sigma_1^{1-\ell}(w_0), \sigma_1^{-1}(w_0)\}) = \{ \sigma_1(w_0), \sigma_1^{\ell-1}(w_0)\}$ and at most one of $\sigma_1^{\ell-1}(w_0)$ and $\sigma_1^{1-\ell}(w_0)$ is equal to $w'$, and thus $\sigma_2$ sends a blue element to a red element under $c_2$, contradicting Fact~\ref{prop:sigmatraversal}.

Now suppose that $-\ell\in\{1,\ldots,|\cO_0^{\red}|\}$.
Thus $c_1(\sigma_1^{\ell-1}(w_0))=\blue$ and $c_1(\sigma_1^{1-\ell}(w_0)) = \red$.
Since $\sigma_2(\{ \sigma_1^{-1}(w_0), \sigma_1^{1-\ell}(w_0)\}) = \{ \sigma_1^{\ell-1}(w_0), \sigma_1(w_0)\}$ and at most one of $\sigma_1^{\ell-1}(w_0)$ and $\sigma_1^{1-\ell}(w_0)$ is equal to $w'$, and thus $\sigma_2$ sends a blue element to a red element under $c_2$, contradicting Fact~\ref{prop:sigmatraversal}.

Therefore we may assume that $|\cO_0^{\red}| < |\ell| \leq \lfloor\frac{|\sigma_1|}{2}\rfloor$.
For all $x\in \cO_0^{\red}$, the element $\sigma_2^{-1}(x)$ is blue under $c_1$.
Thus $|\cO_0^{\red}| = 1$, but then there are two red elements in $\cO_0$ (which is the $\sigma_2$-orbit of $w_0$) under $c_2$, violating extremal choice (C2).
\end{proof}

Observe that the divisibility condition holds for any set $W \subset V(C_n)$ when the smallest prime divisor of $n$ is at least $7$, so Theorem~\ref{thm:main3} follows immediately from Theorem~\ref{thm:divisibility}.

\vspace{0.5em}
\noindent\textbf{Theorem~\ref{thm:main3}.} 
If the smallest prime divisor of $n$ is at least $7$, then $\extD(C_n) = 4$.
\vspace{0.5em}

In the other cases, we must start with a larger set of blanks.

\begin{lemma}\label{lma:16}
Let $W \subset \R/\Z$ be a set with $|W| \geq 16$.
There exists a set $W' \subset W$ where $|W'| = 4$ and $W'$ satisfies the Divisibility Condition.
\end{lemma}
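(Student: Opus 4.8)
\textbf{Proof proposal for Lemma~\ref{lma:16}.}

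The plan is to find, inside any $16$-element set $W \subset \R/\Z$, a $4$-element subset avoiding all the "forbidden gaps" dictated by the Divisibility Condition. Recall that $W'=\{y_1,y_2,y_3,y_4\}$ satisfies the condition precisely when $(W'+\tfrac ik)\cap W' = \varnothing$ for $k\in\{2,3,4,5\}$ and $i\in\{1,\dots,k-1\}$; equivalently, no difference $y_a - y_b$ (taken in $\R/\Z$) equals $\tfrac ik$ for such $k,i$. The set of forbidden differences is $D = \{\tfrac12\}\cup\{\tfrac13,\tfrac23\}\cup\{\tfrac14,\tfrac34\}\cup\{\tfrac15,\tfrac25,\tfrac35,\tfrac45\}$, a finite set (of size $9$, since $\tfrac24=\tfrac12$ is already counted). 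So I want four elements of $W$ with all $\binom 42 = 6$ pairwise differences avoiding $D$.

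The natural approach is a greedy / counting argument. Build an auxiliary graph $H$ on vertex set $W$, joining $w,w'$ whenever $w - w' \in D$ (note $D = -D$, so this is well-defined and symmetric). A $4$-element independent set in $H$ is exactly what we need. Now bound the degrees in $H$: for a fixed $w\in W$ and a fixed $d\in D$, there is at most one $w'\in W$ with $w' = w+d$, so $\deg_H(w) \le |D| = 9$. With $|W|=16$, a graph on $16$ vertices with maximum degree $\le 9$ need not have an independent set of size $4$ in general ($\alpha(H)\ge |W|/(\Delta+1) = 16/10$ only gives $2$), so a cruder bound is not enough — this is where the argument needs to be sharpened. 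The key observation to exploit is that the "$+d$" map is a bijection of $\R/\Z$, so the edges coming from a single $d$ form a partial matching on $W$; moreover $d$ and $-d$ give the same matching. Pairing up $d$ with $-d$, the nine differences fall into the value $\tfrac12$ (an involution: a matching, each vertex in $\le 1$ edge) together with four "$\pm$" pairs $\{\tfrac13\},\{\tfrac14\},\{\tfrac15\},\{\tfrac25\}$ each contributing a union of two matchings, i.e.\ a subgraph of maximum degree $\le 2$ (a disjoint union of paths and cycles). Hence $H$ decomposes as the union of one matching and four subgraphs of max degree $2$, giving $\Delta(H)\le 1 + 2\cdot 4 = 9$ again but with useful structure.

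To extract the independent set I would pick the four elements one at a time, at each step deleting the chosen vertex together with its $\le 9$ neighbors — but $16 - 1 - 9 = 6$ still leaves room, and after a second pick $6 - 1 - 9 < 0$, so naive greedy fails and I must be smarter about the second and later picks, using that neighborhoods overlap. The cleanest route is probably: choose $w_1\in W$ arbitrarily; among the $\ge 6$ vertices not adjacent to $w_1$, choose $w_2$; then I need a vertex nonadjacent to both $w_1$ and $w_2$, then one nonadjacent to all three. To make this work I should instead count ordered or unordered "bad" configurations directly: the number of pairs $\{w,w'\}\subset W$ with $w-w'\in D$ is at most $\tfrac12\cdot 16 \cdot 9$, but better, each $d\in D\setminus\{1/2\}$ contributes a partial matching of $\le 8$ edges and $1/2$ contributes $\le 8$ edges, wait — a partial matching on $16$ vertices has $\le 8$ edges, and grouping $d,-d$ gives two matchings, so $|E(H)| \le 8 + 4\cdot(8+8) = 72$; hmm, that average degree $9$ again. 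I expect the actual proof in the paper does a more careful case analysis or a smarter pigeonhole (perhaps partitioning $\R/\Z$ into arcs, or using that a set avoiding one forbidden difference already has strong structure), and the main obstacle is exactly this: $16$ is tight enough that a one-line averaging bound does not suffice, so the argument must either (i) sharpen the edge count using the overlap/parity structure of the matchings for $\tfrac13,\tfrac14,\tfrac15,\tfrac25,\tfrac12$, or (ii) build the $4$-set by a clever sequential choice that at each step rules out strictly fewer than the worst-case number of candidates. I would pursue (i): show $H$ on $16$ vertices with its special structure (union of $5$ low-degree pieces coming from bijections of the circle) cannot have independence number $\le 3$, e.g.\ by noting that if $\alpha(H)\le 3$ then $H$ has $\ge \binom{16}{2} - (\text{number of non-edges})$ edges forcing a contradiction with $|E(H)|\le 72$ via Turán-type bounds ($\alpha(H)\le 3 \Rightarrow |E(H)| \ge \binom{16}{2} - 3\cdot\binom{6}{2}$ is false in general, so one really needs Ramsey-type input $R(4,\cdot)$ or the Turán bound $\bar H$ has no $K_4$ independent... ), which I'd replace by the concrete structural decomposition above. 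The hard part will be making this counting genuinely tight rather than off by a small constant factor.
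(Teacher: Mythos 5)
Your proposal does not reach a proof: you set up the auxiliary graph $H$ of forbidden differences correctly, but you then candidly observe that every counting bound you can extract (maximum degree $\le 9$, edge count $\le 72$ on $16$ vertices) is too weak to force an independent set of size $4$, and the proposal ends with a list of possible ways to sharpen the count rather than an actual argument. As written, there is a genuine gap: no completed chain of reasoning produces the desired $4$-element subset.

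The idea you are missing is much simpler and does not use the graph at all. Every forbidden difference $\tfrac{i}{k}$ with $2 \le k \le 5$ and $1 \le i \le k-1$ lies in the interval $[\tfrac15, \tfrac45]$; equivalently, each has circle-distance at least $\tfrac15$ from $0$. So any four points of $W$ that all lie in a single arc of length $\tfrac15$ automatically have all pairwise differences in $(-\tfrac15,\tfrac15)$ and hence satisfy the Divisibility Condition outright. The paper simply partitions $\R/\Z$ into the five arcs $[\tfrac{i}{5},\tfrac{i+1}{5})$ and applies pigeonhole: with $|W| \ge 16$ one arc contains at least four elements of $W$, and those four form the required $W'$. (This is exactly the ``partitioning $\R/\Z$ into arcs'' possibility you mention in passing and then set aside in favor of the counting route.) Your graph-theoretic framing is not wrong, but at $|W|=16$ it is not obviously salvageable, whereas the metric observation makes the lemma a two-line pigeonhole argument.
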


\begin{proof}
Split the circle into five intervals given by $[ \frac{i}{5}, \frac{i+1}{5})$ for $i \in \{0,\dots,4\}$.
By the pigeonhole principle, there are at least four elements of $W$ in one of these intervals.
This set satisfies the Divisibility Condition.
\end{proof}

Note that Theorem~\ref{thm:divisibility} and Lemma~\ref{lma:16} imply Theorem~\ref{thm:main2}.

\vspace{0.5em}
\noindent\textbf{Theorem~\ref{thm:main2}.} $\extD(\R/\Z) \leq 16$.
\vspace{0.5em}

Observe that the set of 15 elements given by $\{ \frac{i}{15} : 0 \leq i \leq 14 \}$ contains no four elements that satisfy the Divisibility Condition, so Theorem~\ref{thm:main2} is the best upper bound that is implied by Theorem~\ref{thm:divisibility}.
Further, we get the following upper bound on $\extD(C_n)$ for general $n$.
Let $\chi_n(i)$ be the indicator function that equals 1 if and only if $i$ divides $n$.

\begin{corollary}
Let $n \geq 6$. Then $\extD(C_n) \leq 3 \left(1+\chi_n(2) + 2\chi_n(3) + 2\chi_n(4) + 4\chi_n(5)\right)+1$.
\end{corollary}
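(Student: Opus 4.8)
The plan is to reduce the claim to a single application of Theorem~\ref{thm:divisibility} by showing that any sufficiently large blank set $W \subset V(C_n)$ contains a $4$-element subset satisfying the Divisibility Condition, where the threshold is exactly the stated quantity $3\bigl(1+\chi_n(2)+2\chi_n(3)+2\chi_n(4)+4\chi_n(5)\bigr)+1$. The key observation is that the Divisibility Condition for a set $W' \subset \R/\Z$ forbids, for each $k \in \{2,3,4,5\}$, any two elements of $W'$ differing by a multiple of $\tfrac1k$; but when $k \nmid n$, the vertices of $C_n$ (sitting at $\{0,\tfrac1n,\dots,\tfrac{n-1}n\}$) never differ by $\tfrac ik$ for $i \in \{1,\dots,k-1\}$, so that value of $k$ imposes no constraint at all. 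Thus only those $k \in \{2,3,4,5\}$ with $k \mid n$ are active, and the number of active constraints is controlled by $\chi_n(2)+\chi_n(3)+\chi_n(4)+\chi_n(5)$.

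First I would make the reduction precise: for each active $k$, the relation ``$u \sim_k v$ iff $u-v \in \tfrac1k\Z$'' partitions $V(C_n)$ into $\tfrac nk$ blocks of size $k$; a set $W'$ satisfying the Divisibility Condition is exactly a set that is an independent transversal-type set, i.e. contains at most one vertex from each $\sim_k$-block for every active $k$ simultaneously. To extract such a $W'$ of size $4$ from $W$, I would argue greedily: pick any $w_1 \in W$; this ``kills'' at most one other vertex of $W$ per active $k$ in each of the relevant residue classes — more carefully, choosing $w_1$ forbids, for each active $k$, the $k-1$ other members of $w_1$'s $\sim_k$-block, so at most $\sum_{k \text{ active}} (k-1)$ vertices of $W \setminus \{w_1\}$ become unavailable. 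Iterating, after choosing $w_1, w_2, w_3$ we have forbidden at most $3\sum_{k \text{ active}}(k-1)$ vertices, so as long as $|W| \ge 3\sum_{k \text{ active}}(k-1) + 4$ a fourth valid choice $w_4$ remains. Here $\sum_{k\text{ active}}(k-1) = \chi_n(2)\cdot 1 + \chi_n(3)\cdot 2 + \chi_n(4)\cdot 3 + \chi_n(5)\cdot 4$; but note $4 \mid n$ forces $2 \mid n$, so the $k=2$ and $k=4$ blocks are nested and choosing a vertex forbids only $3$ new vertices total from the two of them combined, matching the coefficient $2\chi_n(4)$ (rather than $3$) together with $\chi_n(2)$ once $\chi_n(4)=1$. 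Reconciling this nesting bookkeeping so that the bound comes out as exactly $3(1+\chi_n(2)+2\chi_n(3)+2\chi_n(4)+4\chi_n(5))$ — i.e. peeling off a ``$1+$'' for the always-present trivial constraint and handling the $2\!\mid\!n \Rightarrow 4\!\mid\!n$ overlap — is the one place that needs care. Then Theorem~\ref{thm:divisibility} applied to $W'$ gives that every precoloring of $V(C_n)\setminus W'$ extends to a distinguishing $2$-coloring, and since $n \ge 6$ gives $D(C_n)=2$, this is exactly the statement that $\extD(C_n)$ is at most that threshold.

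The main obstacle I anticipate is not conceptual but combinatorial accounting: getting the greedy/pigeonhole count to land on precisely the claimed formula rather than a slightly weaker bound. In particular one must be careful that (i) the divisibility blocks for different active $k$ genuinely overlap only through the $2 \mid 4$ containment (the blocks for $k=3$ and $k=5$ are coprime to the others, so by CRT a vertex lies in ``independent'' blocks there), and (ii) the greedy choice at each step can always be made inside $W$, which is immediate from the size bound but must be stated. A clean way to avoid the nesting subtlety entirely is to replace the greedy argument by a direct pigeonhole on the product partition: the active constraints partition $V(C_n)$ into $m := \operatorname{lcm}\{k : k \text{ active}\}$-blocks of a certain size, and $W$ meeting one such block in $\ge 4$ points... but that over-counts, so the greedy bound as above is tighter and is what produces the stated coefficients. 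I would therefore carry out the greedy argument, tracking the forbidden set as a union over active $k$ of $\sim_k$-blocks, and verify the arithmetic $|\text{forbidden after 3 picks}| \le 3(\chi_n(2)+2\chi_n(3)+2\chi_n(4)+4\chi_n(5)) + 3$ using the nesting of the $2$- and $4$-blocks, which yields $|W| \ge 3(1+\chi_n(2)+2\chi_n(3)+2\chi_n(4)+4\chi_n(5))+1$ as the sufficient size and hence the corollary.
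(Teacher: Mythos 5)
Your proposal is correct and follows essentially the same route as the paper: a greedy selection of $w_1,\dots,w_4$ from $W$, where each choice forbids at most $1+\chi_n(2)+2\chi_n(3)+2\chi_n(4)+4\chi_n(5)$ further elements (the ``$2\chi_n(4)$'' coefficient arising exactly because $w+\tfrac24=w+\tfrac12$ is already charged to $k=2$), followed by an application of Theorem~\ref{thm:divisibility}. The nesting bookkeeping you flag as the delicate point is handled in the paper by simply counting, for fixed $w$, the distinct vertices of $C_n$ in $\{w+\tfrac{i}{k}: 1\le i<k,\ 2\le k\le 5\}$, which resolves it cleanly.
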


\begin{proof}
Let $W \subset V(C_n)$ have size $3(1+\chi_n(2) + 2\chi_n(3) + 2\chi_n(4) + 4\chi_n(5))+1$.
For each $w \in W$, there are at most $\chi_n(2) + 2\chi_n(3) + 2\chi_n(4) + 4\chi_n(5)$ elements in $W \cap \{ w + \frac{i}{k} : 1 \leq i < k, 2 \leq k \leq 5\}$.
Iteratively select a subset $W' = \{ w_1, w_2, w_3, w_4\} \subset W$ of size four where $w_\ell \neq w_j + \frac{i}{k}$ for all $\ell > j$ and $1 \leq i < k \leq 5$.
This restriction removes at most $1+ \chi_n(2) + 2\chi_n(3) + 2\chi_n(4) + 4\chi_n(5)$ elements from $W$ in each of the first three selections of $w_1$, $w_2$, and $w_3$.
At least one element remains to select $w_4$.
The set $W'$ satisfies the Divisibility Condition and hence any coloring on $V(C_n) \setminus W$ extends arbitrarily to $V(C_n) \setminus W'$ and can be distinguished using Theorem~\ref{thm:finishit}.
\end{proof}

%%%%%%%%%%%%%%%%%%%%%%%%%%%%%%%%%%%%%%%%%%%%%%%%%%%%%%%
%%%%%%%%%%%%%%%%%%%%%%%%%%%%%%%%%%%%%%%%%%%%%%%%%%%%%%%
%%%%%%%%%%%%%%%%%%%%%%%%%%%%%%%%%%%%%%%%%%%%%%%%%%%%%%%
%%%%%%%%%%%%%%%%%%%%%%%%%%%%%%%%%%%%%%%%%%%%%%%%%%%%%%%
%%%%%%%%%%%%%%%%%%%%%%%%%%%%%%%%%%%%%%%%%%%%%%%%%%%%%%%
\section{Future Work}
\label{sec:conclusion}

In addition to resolving Conjectures \ref{conj:main} and \ref{conj:S1}, we pose the following questions for future study.

\begin{problem}
	Determine $\extD(\R^k)$ for $k \geq 2$.
\end{problem}

\begin{problem}
	Determine $\extD(\S^d)$ for $d \geq 2$.
\end{problem}

Suppose $W \subset \S^d$ is a set of blanks and $c : \S^d\setminus W \to \{\red,\blue\}$ is a 2-coloring of $\S^d\setminus W$ that does not extend to a distinguishing coloring of $\S^d$. Observe that $W' = W \cup \{ 0 \} \subset \R^{d+1}$ is a set of blanks and 
$$c'(\vx) = \begin{cases}c(\vx) & \text{if $\vx \in \S^d$,}\\ \red &\text{otherwise}\end{cases}$$
is a 2-coloring of $\R^{d+1} \setminus W$ that does not extend to a distinguishing coloring of $\R^{d+1}$.  Therefore, $\extD(\S^d) < \extD(\R^d)$.  This, together with Lemma \ref{lemma:faithful2} and the observation that the 3-dimensional cube has a faithful embedding into $\S^2$, gives rise to the following conjectures, the first of which also follows from our conjecture that $\extD(\S^1)=6$.  

%\begin{conjecture}
%	$\extD(\S^1) = 6$.
%\end{conjecture}
%
\begin{conjecture}
	$\extD(\R^2) = 7$.
\end{conjecture}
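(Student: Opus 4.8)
The plan is to establish two matching bounds, $\extD(\R^2)\ge 7$ and $\extD(\R^2)\le 7$. (One first checks that $D_{\Aut(\R^2)}(\R^2)=2$, since a generic $2$-coloring of $\R^2$ has trivial stabilizer, so that $\extD(\R^2)=\extD(\R^2,\Aut(\R^2);2)$.) The lower bound is unconditional. Because $\extD(\S^1)\ge 6$ is already proved in the excerpt — via the faithful embeddings $C_5\hookrightarrow C_{10}\hookrightarrow\S^1$ together with Lemmas~\ref{lemma:faithful} and~\ref{lemma:faithful2} — there is a fixing set $U\subset\S^1$ with $|U|=5$ and a $2$-coloring of $\S^1\setminus U$ admitting no distinguishing extension. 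Setting $U'=U\cup\{0\}\subset\R^2$ and extending the coloring by $\red$ off $\S^1$ yields, exactly as in the sphere-to-Euclidean construction in the excerpt, a fixing set of size $6$ in $\R^2$ whose precoloring admits no distinguishing extension; hence $\extD(\R^2)\ge 7$.

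The upper bound $\extD(\R^2)\le 7$ is the substantial and genuinely harder direction, and it is here that Conjecture~\ref{conj:S1} would be invoked. The strategy is to carry the machinery of Sections~3--5 into the plane. First I would normalize: since distinguishing colorings are preserved by $\Aut(\R^2)$, I may apply a generic area-preserving linear map to a $7$-element fixing set $W$ to arrange a two-dimensional analogue of the Divisibility Condition, ruling out small-order coincidences among blanks under the relevant subgroups. Assuming for contradiction a precoloring $c\colon\R^2\setminus W\to\{\red,\blue\}$ with no distinguishing extension, every full extension $c^*$ of $c$ must be preserved by a nontrivial $\gamma\colon\vx\mapsto A\vx+\vb$, and I would classify the permitted $\gamma$ by type. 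Translations and point reflections $\vx\mapsto 2w_0-\vx$ (the planar analogue of reflection about a point) are handled by reproducing the $\taur,\taub,\sigma$ traversal of Fact~\ref{prop:sigmatraversal} along an invariant line, reducing the count to the line problem and hence to Theorem~\ref{thm:main1}, which needs only $4$ blanks. Elements of $O(2)$ fixing a common point $p$ are handled by restricting to the level circles centered at $p$ and invoking $\extD(\S^1)=6$, which would bound the blanks needed to break all rotations and reflections about $p$.

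The main obstacle is the richness of $\Aut(\R^2)$ relative to the one-parameter families of translations and reflections available on the line and circle. The plane additionally carries shears $\vx\mapsto\bigl(\begin{smallmatrix}1&t\\0&1\end{smallmatrix}\bigr)\vx+\vb$, squeeze (hyperbolic) maps, and glide reflections, and — most seriously — the symmetries breaking two different extensions may have different centers or axes, so that no single circle or line is guaranteed to capture enough blanks. The crux is therefore to prove a \emph{common center/direction} statement: after the extremal selection of two non-forbidden extensions $c_1,c_2$ as in Section~5, the generated symmetries $\sigma_1,\sigma_2$ must share an invariant line or a common fixed point, so that the one-dimensional reductions above apply. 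Ruling out shears and hyperbolic elements should follow from the extremal choice, since a $\gamma$-invariant extension forces $c$ itself to be $\gamma$-invariant off the finite blank set, which those orbit structures make incompatible with the selection of $c_1$ and $c_2$. Verifying this compatibility, and establishing the common-center statement in full generality, is where the real work — and the reason the statement remains a conjecture — lies.
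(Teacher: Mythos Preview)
This statement is a \emph{conjecture}; the paper offers no proof. It appears in Section~\ref{sec:conclusion} (Future Work), motivated only on the lower-bound side. So there is no ``paper's own proof'' to compare your proposal against.

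Your lower bound is exactly the paper's argument: the construction $W'=W\cup\{0\}$ with the constant-$\red$ extension off $\S^1$ is precisely what the paper sketches to obtain $\extD(\S^d)<\extD(\R^{d+1})$, and combined with the already-proved bound $\extD(\S^1)\ge 6$ this yields $\extD(\R^2)\ge 7$ unconditionally. Your reproduction of this step is correct, including the check that $W'$ is a fixing set (six points, five of them on a circle, are not pointwise fixed by any nontrivial affine map of determinant $\pm 1$).

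For the upper bound the paper says nothing beyond the remark that the conjecture ``also follows from our conjecture that $\extD(\S^1)=6$,'' which should be read as a heuristic for the target value $7$ rather than an argument that $\extD(\R^2)\le 7$. Your sketch already goes well beyond anything in the paper, and you correctly isolate the essential obstruction: the symmetries preserving two different extensions need not share a fixed point or an invariant line, and $\Aut(\R^2)$ contains shears and hyperbolic elements with no analogue in the one-dimensional theory of Sections~\ref{sec:overview}--\ref{sec:proof}. Until a ``common center/direction'' lemma of the sort you describe is established, the reduction to Theorem~\ref{thm:main1} and Conjecture~\ref{conj:S1} does not go through. Your proposal is therefore a reasonable outline of where the difficulty lies, not a proof --- which is consistent with the statement's status as an open conjecture in the paper.
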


\begin{conjecture}
	$\extD(\S^2) = 9$.
\end{conjecture}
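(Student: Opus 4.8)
The plan is to prove the two bounds separately, with the lower bound $\extD(\S^2)\ge 9$ being essentially immediate and the upper bound $\extD(\S^2)\le 9$ carrying all of the difficulty. Here $\Aut(\S^2)=O(3)$ and the relevant number of colors is $D_{O(3)}(\S^2)=2$: a $1$-coloring is fixed by everything, while three red points that are linearly independent in $\R^3$ and have pairwise distinct spherical distances give a distinguishing $2$-coloring, since any isometry fixing the red set must fix each of the three points (distances are distinct) and hence be the identity. For the lower bound I would invoke the $O(3)$-faithful embedding of $Q_3$ into $\S^2$ given by the eight vertices of a cube, whose setwise stabilizer in $O(3)$ is the octahedral group $S_4\times C_2$, matching $\Aut(Q_3)$. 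Since it is known that $D(Q_3)=3>2$, Lemma~\ref{lemma:faithful2} with $k=2$ yields $\extD(\S^2)\ge 1+|Q_3|=9$. This also pins down the extremal obstruction: the precoloring that is monochromatic off the cube vertices.

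For the upper bound it suffices to treat fixing sets of size exactly $9$. Indeed, if $|W|\ge 9$, then since $W$ is not contained in a great circle it contains a $9$-element fixing subset $W_0$; extending the given precoloring arbitrarily on $W\setminus W_0$ and applying the size-$9$ result produces a distinguishing coloring that still agrees with the original precoloring off $W$. So fix $|W|=9$ and a precoloring $c_0$ with red set $R_0=c_0^{-1}(\red)\subseteq \S^2\setminus W$. The key observation is that if an extension $c$ is fixed by a nontrivial $\gamma\in O(3)$, then $\gamma(R_0)\,\triangle\,R_0$ is finite, being contained in $W\cup\gamma W\cup\gamma^{-1}W$. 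Hence every automorphism that could possibly survive lies in the subgroup
\[
	G=\{\gamma\in O(3): |\gamma(R_0)\,\triangle\,R_0|<\infty\},
\]
and the whole problem reduces to choosing the colors $R_W\subseteq W$ on the blanks so that no nontrivial element of $G$ stabilizes $R=R_0\cup R_W$.

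I would then split on whether $G$ is finite. If $G$ is infinite, its closure is an infinite closed subgroup of $O(3)$ and therefore contains a circle $SO(2)$ about some axis $\ell$; feeding this into the condition $|\gamma(R_0)\,\triangle\,R_0|<\infty$ forces $c_0$ to coincide, off finitely many latitude circles, with a function of the $\ell$-latitude (and the fully symmetric case $\overline{G}\supseteq SO(3)$ folds into the monochromatic analysis). A surviving symmetry must then preserve the finite defect set $D$ measuring where $R$ departs from this latitude coloring, and since the stabilizer of a finite set is a finite subgroup of $O(3)$, the task collapses to a distinguishing problem for a finite group on $\le 9$ relevant blanks, together with the rotations about $\ell$, the reflections through $\ell$, and the equatorial reflection. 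If $G$ is finite, the surviving elements either move $W$ (each such sends some blank into the already-colored region, yielding an easily satisfiable constraint) or lie in the finite group $\mathrm{Stab}_G(W)$ acting on the nine blanks. The crucial input, in every branch, is an orbit-counting argument exploiting that $9$ is odd: the antipodal map $-I$ has no fixed point on $\S^2$ and all its orbits have size $2$, so no subgroup containing $-I$ can stabilize an odd-sized set, and the polyhedral rotation groups $A_4,S_4,A_5$ fix no point with minimum orbit sizes $4,6,12$, so $9$ is not a sum of their orbit sizes. What remains is that $\mathrm{Stab}_G(W)$ is a small cyclic or dihedral group, each of which has distinguishing number $2$ on nine points; in the monochromatic (cube) case this is exactly the analysis with $R_0=\varnothing$, where I would take $R_W$ to be three linearly independent, distinct-distance blanks.

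The main obstacle is the infinite/continuous branch. Unlike the one-dimensional theorems proved above, $O(3)$ contains genuine one-parameter families of rotations and a two-sphere of reflection planes, so one must defeat an entire continuum of candidate symmetries with only nine free points; the reduction to a finite defect set is what makes this possible, but converting ``$\gamma(R_0)\,\triangle\,R_0$ finite for a dense family of rotations'' into ``$c_0$ is a latitude function off finitely many circles'' is delicate precisely because $c_0$ is an arbitrary, possibly non-measurable function, and one must simultaneously break the axial rotations, the planar reflections through $\ell$, and the equatorial reflection. A secondary difficulty is the bookkeeping: certifying in each branch that the induced finite action on the blanks really has distinguishing number $2$ requires ruling out, configuration by configuration, that the adversary's nine points carry a polyhedral symmetry—exactly the place where the cube on eight points is extremal and a ninth point is forced to break the pattern. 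Combining the two bounds then gives $\extD(\S^2)=9$.
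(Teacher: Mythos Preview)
This statement appears in the paper as an open \emph{conjecture}; the paper offers no proof. The only argument the paper supplies is the lower bound: the $3$-cube $Q_3$ embeds $O(3)$-faithfully into $\S^2$ and has $D(Q_3)=3>2$, so Lemma~\ref{lemma:faithful2} gives $\extD(\S^2)\ge 1+|Q_3|=9$. Your lower-bound paragraph reproduces exactly this reasoning, so on that half you agree with the paper.

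For the upper bound there is nothing in the paper to compare against, and what you have written is a plan rather than a proof. The gap you yourself flag is real and, as stated, unfilled: the subgroup $G=\{\gamma\in O(3):|\gamma R_0\,\triangle\,R_0|<\infty\}$ need not be closed, so from $\overline{G}\supseteq SO(2)$ about an axis $\ell$ you cannot conclude that every rotation about $\ell$ lies in $G$, and without that the step ``$c_0$ is a latitude function off finitely many circles'' does not follow for an arbitrary (possibly non-measurable) $c_0$. Even the finite-$G$ branch is incomplete: for elements of $G$ that do not stabilize $W$ setwise you assert that each imposes ``an easily satisfiable constraint,'' but a finite $G$ may contain many such elements imposing conflicting constraints on the same blank, and you have not argued that the $2^9$ extensions outrun the bad ones. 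In short, your lower bound is correct and matches the paper's motivation for the conjecture, while your upper bound remains an outline for what the paper explicitly leaves open.
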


\begin{conjecture}
	$\extD(\R^3) = 10$.
\end{conjecture}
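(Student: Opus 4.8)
I would attack the two inequalities $\extD(\R^3)\ge 10$ and $\extD(\R^3)\le 10$ by completely different means. One preliminary point must be dispatched first: that $D_{\Aut(\R^3)}(\R^3)=2$, so that $\extD(\R^3)$ really abbreviates $\extD(\R^3,\Aut(\R^3);2)$. For this, colour the union of four pairwise disjoint balls of radii $1,2,3,4$ centred at $0$, $(10,0,0)^{\top}$, $(0,10,0)^{\top}$, $(0,0,10)^{\top}$ with $\red$ and everything else $\blue$; a colour‑preserving element of $\Aut(\R^3)$ must carry each ball to a ball of the same radius, hence must fix the four non‑coplanar centres, hence is the identity. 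The same kind of asymmetric region shows $D_{\Aut(\S^2)}(\S^2)=2$.

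\emph{The lower bound} I would simply assemble from the observations already in Section~\ref{sec:conclusion}. The eight vertices of a cube inscribed in $\S^2$ give an $\Aut(\S^2)$‑faithful embedding of $Q_3$, and $D(Q_3)=3>2=D_{\Aut(\S^2)}(\S^2)$ by \cite{BCCubeDist04}; hence Lemma~\ref{lemma:faithful2} gives $\extD(\S^2)\ge 1+|Q_3|=9$, witnessed by colouring all of $\S^2$ except the eight cube vertices with a single colour. Adjoining the origin, the $9$‑element set $W$ consisting of those eight vertices together with $0\in\R^3$ is a fixing set for $\Aut(\R^3)$ (a map fixing $W$ pointwise fixes $0$, hence is linear, hence fixes three linearly independent cube vertices, hence is the identity), and the single‑colour precolouring of $\R^3\setminus W$ has no distinguishing extension: the restriction to $\S^2$ of any extension would be an $\Aut(\S^2)$‑distinguishing extension of the single‑colour precolouring of $\S^2$ off the cube, which is impossible. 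Thus $\extD(\R^3)\ge 10$.

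\emph{The upper bound} is the substantive part, and I would model it on the proof of Theorem~\ref{thm:divisibility}. Given a fixing set $W\subset\R^3$ with $|W|=10$, one would first normalise $W$ by an affine volume‑preserving change of coordinates so that it satisfies a three‑dimensional nondegeneracy condition in the role of the Divisibility Condition (forbidding small affine dependences and short periodicities among the blanks); then, among all extensions of the given precolouring to $\R^3$ minus a single well‑chosen blank $w_0$, one would select an extremal extension $c_1$, form a second extension $c_2$ by recolouring a carefully chosen subset of the remaining blanks, prove that neither $c_1$ nor $c_2$ permits any non‑identity symmetry using a three‑dimensional analogue of Fact~\ref{prop:sigmatraversal} for the symmetry "generated" by a pair of extensions, and extract a contradiction.

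\emph{The main obstacle} is exactly the place where this programme departs from the one‑dimensional argument: $\Aut(\R^3)$ is a positive‑dimensional Lie group containing, beyond translations, rotations of every angle about every axis, plane reflections, point reflections, rotoreflections and screw motions, so there is no single canonical involution "$\tau_{w_0}$", no reduction of the orbit analysis to one‑parameter subgroups via $\taub\circ\taur$, and no way simply to union‑bound over the infinitely many candidate symmetries. Making the "forbidden extensions" bookkeeping and the extremal selection go through for this group — presumably by stratifying symmetries according to their fixed‑point sets and handling translations and screws, axial rotations, and orientation‑reversing maps on separate footing — is where essentially all the difficulty lies; it is quite possible that the precise nondegeneracy hypothesis, and hence the exact constant, would only be pinned down in the course of that analysis, with the value $10=3\cdot 3+1$ (matching $\extD(\R)=4$ and the conjectured $\extD(\R^2)=7$) being what one expects to emerge.
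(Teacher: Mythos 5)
The statement you are addressing is stated in the paper only as a conjecture; the paper offers no proof of it, so there is nothing to compare your upper-bound argument against. Your lower bound $\extD(\R^3)\ge 10$ is correct and is in fact exactly the paper's own justification for the value $10$: the faithful embedding of $Q_3$ into $\S^2$ together with $D(Q_3)=3>2$ and Lemma~\ref{lemma:faithful2} gives $\extD(\S^2)\ge 9$, and adjoining the origin as a ninth blank transfers the non-extendible precoloring from $\S^2$ to $\R^3$ exactly as in the paper's displayed construction of $c'$ in Section~\ref{sec:conclusion}. One small point worth making explicit: the paper's $\Aut(\R^3)$ consists of all affine maps $\vx\mapsto A\vx+\vb$ with $\det A=\pm 1$, not only isometries, so in your verification that $D_{\Aut(\R^3)}(\R^3)=2$ you should note that a volume-preserving affine map carrying a round ball onto a round ball must have all singular values equal to $1$ and hence is an isometry; with that remark your four-ball colouring does distinguish.

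The genuine gap is the upper bound $\extD(\R^3)\le 10$, which is the entire content of the conjecture and which you do not prove: you only outline a programme modelled on Theorem~\ref{thm:divisibility} and then, candidly, identify the obstruction that stops it. That obstruction is real. The one-dimensional proof leans on three features with no analogue in $\R^3$: a canonical involution $\tau_{w_0}$ about a blank, the fact that two distinct permitted reflections compose to a single translation $\sigma=\taub\circ\taur$ whose near-color-preservation (Fact~\ref{prop:sigmatraversal}) controls the whole analysis, and the finiteness of the list of forbidden extensions established in Lemma~\ref{lma:nonforbidden}. In $\R^3$ the stabiliser of a precoloured configuration can a priori involve rotations of arbitrary angle about arbitrary axes, screw motions, shears of determinant $\pm1$, and orientation-reversing maps, and two such symmetries need not generate anything resembling the lattices of Section~\ref{sec:overview}; no nondegeneracy condition on $W$ analogous to the Divisibility Condition is formulated, let alone shown to be achievable by an affine normalisation. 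Since even $\extD(\R^2)=7$ is left open in the paper (and is itself contingent on the unproved $\extD(\S^1)=6$), your upper bound should be regarded as a research plan rather than a proof, and the value $10$ as conjectural.
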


Finally, proper coloring and list-coloring versions of the distinguishing number were introduced in \cite{CT} and  \cite{FFG}, respectively.  We feel that studying these distinguishing parameters through the lens of precoloring extensions would be an interesting direction for further inquiry, in line with the broader precoloring extension literature discussed in the introduction.

\end{document}